\NewDocumentCommand{\ceil}{s O{} m}{%
  \IfBooleanTF{#1} 
    {\left\lceil#3\right\rceil} 
    {#2\lceil#3#2\rceil} 
}
\newtheorem{theorem}{Theorem}[section]
\newtheorem{lemma}[theorem]{Lemma}
\theoremstyle{remark}
\newtheorem{remark}[theorem]{\bf{Remark}}
\theoremstyle{definition}
\newtheorem{definition}[theorem]{Definition}
\newtheorem{proposition}[theorem]{Proposition}
\newcommand\cbrk{\text{$]$\kern-.15em$]$}}
\newcommand\opar{\text{\,\raise.2ex\hbox{${\scriptstyle
|}$}\kern-.34em$($}}
\newcommand\cpar{\text{$)$\kern-.34em\raise.2ex\hbox{${\scriptstyle |}$}}\,}
\newcommand{\aint}{-\hspace{-0.38cm}\int}
\newcommand\bL{\mathbb{L}}
\newcommand\bR{\mathbb{R}}
\newcommand\bH{\mathbb{H}}
\newcommand\bZ{\mathbb{Z}}
\newcommand\bN{\mathbb{N}}
\newcommand\bM{\mathbb{M}}
\newcommand\cO{\mathcal{O}}
\newcommand\ep{\varepsilon}
\newcommand{\mysection}[1]{\section{#1}
\setcounter{equation}{0}}
\newcommand{\Ccinf}{C_{c}^{\infty}}
\newcommand{\R}{\mathbb{R}}
\begin{document}

\title[weighted $L_q(L_p)$-estimates  for time-fractional equations]
{Weighted $L_q(L_p)$-estimate with Muckenhoupt weights for the diffusion-wave equations with time-fractional derivatives}

\author{Beom-seok Han}
\address{Department of Mathematics, Korea University, 1 Anam-dong, Sungbuk-gu, Seoul,
136-701, Republic of Korea} \email{hanbeom@korea.ac.kr}

\author{Kyeong-Hun Kim}
\address{Department of Mathematics, Korea University, 1 Anam-dong,
Sungbuk-gu, Seoul, 136-701, Republic of Korea}
\email{kyeonghun@korea.ac.kr}
\thanks{}

\author{Daehan Park}
\address{Department of Mathematics, Korea University, 1 Anam-dong, Sungbuk-gu, Seoul,
136-701, Republic of Korea} \email{daehanpark@korea.ac.kr}

\subjclass[2010]{45D05, 45K05, 45N05, 35B65, 26A33}

\keywords{Fractional diffusion-wave equation, $L_q(L_p)$-theory, Muckenhoupt $A_p$ weights, Caputo fractional derivative}

\begin{abstract}


We present  a weighted $L_{q}(L_{p})$-theory ($p,q\in(1,\infty)$) with Muckenhoupt weights for the equation 
$$
\partial_{t}^{\alpha}u(t,x)=\Delta u(t,x) +f(t,x), \quad t>0, x\in \bR^d.
$$
Here, $\alpha\in (0,2)$ and $\partial_{t}^{\alpha}$ is the Caputo fractional derivative of order $\alpha$. In particular we prove that for any $p,q\in (1,\infty)$, $w_{1}(x)\in A_p$ and $w_{2}(t)\in A_q$,
$$
\int^{\infty}_0\left(\int_{\bR^d} |u_{xx}|^p \,w_{1} dx \right)^{q/p}\,w_{2}dt \leq N \int^{\infty}_0\left(\int_{\bR^d} |f|^p \,w_{1} dx \right)^{q/p}\,w_{2}dt,
$$
where $A_p$ is the class of   Muckenhoupt $A_p$  weights. Our approach is based on the sharp function estimates of the derivatives of solutions.

\end{abstract}

\maketitle

\mysection{Introduction}

Let $\alpha \in (0,2)$ and $\partial^{\alpha}_t$ denote the Caputo derivative of order $\alpha$. The  equation
\begin{equation} \label{main_equation_non_div}
\partial_t^\alpha u(t,x) = \Delta u(t,x) +f(t,x),\quad t>0\, ;\, u(0,\cdot)=1_{\alpha>1}\frac{\partial u}{\partial t}(0,\cdot)=0
\end{equation}
 describes different phenomena according to the range of $\alpha$.  The heat equation ($\alpha=1$) represents the heat propagation in homogeneous media. For $\alpha \in (0,1)$, the  equation describes subdiffusive aspect of the anomalous diffusion, caused by particle sticking and trapping effects (see e.g. \cite{metzler1999anomalous,metzler2000random}). If $\alpha\in(1,2)$, the fractional wave equation   gives information of wave propagating in viscoelastic media (see e.g. \cite{mainardi1995fractional,mainardi2001fractional}). 

In this article, we prove the unique solvability of equation \eqref{main_equation_non_div} with zero initial data in a weighted $L_q(L_p)$-spaces.  In particular, we prove that for any $p,q\in (1,\infty)$, $w_{1}=w_{1}(x)\in A_p$ and $w_{2}=w_{2}(t)\in A_q$, it holds that  

$$
\| |\partial^{\alpha}_{t}u|+|u|+|u_{x}|+|u_{xx}| \|_{\bL_{q,p}(w_{2},w_{1},T)} \leq N \| f \|_{\bL_{q,p}(w_{2},w_{1},T)},
$$
where the norm in $\bL_{q,p}(w_{2},w_{1},T)$ is defined by
$$
\| f \|_{\bL_{q,p}(w_{2},w_{1},T)} := \left( \int_0^T \left( \int_{\bR^d} |f(t,x)|^p w_{1}(x)dx\right)^{q/p}w_{2}(t)dt\right)^{1/q}.
$$
Here $A_p$  denotes the class of Muckenhoupt $A_p$ weights.  See Definition \ref{def 05.06.1} for the definition. 

 Here is a  short description on the closely related works (that is $L_q(L_p)$-theory) and comparison to our result.   
A standard or unweighted $L_q(L_p)$-estimate for the equation (and for more general Volterra equations)
\begin{equation}
  \label{eqn 10.20}
\partial^{\alpha}_tu=a^{ij}(t,x)u_{x^ix^j}(t,x)+f
\end{equation}
was introduced in \cite{clement1992global, Pr1991} under the conditions  $a^{ij}=\delta^{ij}$, $\alpha \in (0,1)$, and 
$$
\frac{2}{\alpha q}+\frac{d}{p}<1.
$$
The results of \cite{clement1992global, Pr1991} are based on operator theory (or semigroup theory), and similar approach is used  in \cite{zacher2005maximal} for general $a^{ij}(t,x)$  under the conditions that $p=q>1$, $a^{ij}$ are uniformly continuous  in $(t,x)$, and 
$$
\alpha\not\in \{\frac{2}{2p-1}, \frac{2}{p-1}-1,\frac{1}{p}, \frac{3}{2p-1}\}.
$$
In \cite{kim17timefractionalpde}, under the continuity condition of $a^{ij}(t,x)$, above restrictions on $\alpha$ and $p,q$ are dropped and it is only assumed that $\alpha\in (0,2)$ and $p,q>1$. The   Calder\'on-Zygmund theorem is mainly used in  \cite{kim17timefractionalpde}.  
 Quite recently, using level set arguments,  conditions on $a^{ij}$ are significantly relaxed in \cite{dong2019lp}. More precisely,  it is only assumed that the coefficients are measurable in $t$ and have small mean oscillation in $x$. However the approach of \cite{dong2019lp} only covers the case $p=q$ and $\alpha\in (0,1)$, and all the above-mentioned results are handled in Sobolev spaces without weights.

Our result substantially generalizes the previous results in the sense that  we cover general $L_q(L_p)$-theory with Muckenhoupt weights  and we do not impose any algebraic conditions on $\alpha$, $p$, and $q$. This is possible since we use a different approach. We control the sharp functions of solutions and their derivatives in terms of maximal functions of free terms, and apply Fefferman-Stein and Hardy-Littlewood theorems to obtain a priori estimates. Such approach is a typical tool in the theory of PDEs with local operators,   but  has not been  used well (if any)  for equation \eqref{eqn 10.20} mainly because the sharp function estimates are based on local estimates of solutions which are non-trivial  for equations with non-local operators.

We also remark that in this article we only cover the case $a^{ij}=\delta^{ij}$ because our estimations depend on  upper bounds of kernel appearing in the representation of solutions of equation \eqref{main_equation_non_div}. Obviously our results hold if $a^{ij}$ are constants, and moreover our approach works for   equation \eqref{eqn 10.20} with variable coefficients   if one can obtain sharp upper bounds of derivatives of the kernel  related to  the equation.

This article is organized as follows. In Section 2, we introduce some definitions and facts related to  fractional calculus, and we present our main result, Theorem 
\ref {theorem 5.1}.  In Section 3, we prove  a priori estimates of solutions to \eqref{main_equation_non_div}. In Section 4, we prove the main theorem.

We finish the introduction with  notation used in this article. $\mathbb{N}$ stands for the set of positive integers. $\mathbb{R}^d$ denotes the $d$-dimensional Euclidean space of points $x = (x^1,\dots,x^d)$. $B_r(x) = \{ y\in\mathbb{R}^d:|x-y|<r \}$ and $B_r = B_r(0)$. If a set $E$ is in $\R^{d}$ (or $\R^{d+1}$), then $|E|$ is the Lebesgue measure of $E$. For $i=1,...,d$, multi-indices $\gamma=(\gamma_{1},...,\gamma_{d})$,
$\gamma_{i}\in\{0,1,2,...\}$, and functions $u(t, x)$ we set
$$
u_{x^{i}}=\frac{\partial u}{\partial x^{i}}=D_{i}u,\quad
D^{\gamma}u=D^{\gamma}_xu=D_{1}^{\gamma_{1}}\cdot...\cdot D^{\gamma_{d}}_{d}u.
$$
We also use the notation $D^m$ (or $D^m_x$) for  partial derivatives of order $m$ with respect to $x$.  
 Similarly,
by $\partial_{t}^nu$ (or $\frac{d^n}{dt^n}u$) we 
mean a partial derivative  of order $n$   with respect to $t$. $C_c^\infty(\cO)$ denotes the collection of all infinitely differentiable functions with compact support in $\cO$, where $\cO$ is an open set in $\mathbb{R}^d$ or $\R^{d+1}$. For a measure space $(E,\mu)$, a Banach space $B$, and $p\in(1,\infty)$, $L_p(E,\mu;B)$ denotes the set of $B$-valued $\mu$-measurable functions $u$ on $E$ satisfying
\begin{equation*}
\| u \|_{L_p(E,\mu;B)} := \left(\int_E\|u\|_B^p d\mu\right)^{1/p}<\infty.
\end{equation*}
If $B = \R$, then $L_p(E,\mu;B) = L_p(E,\mu)$. For a measurble set $A$ and a measurable function $f$, we use the following notation
\begin{equation*}
\aint_{A}f(x)d\mu :=\frac{1}{\mu(A)}\int_{A}f(x)d\mu.
\end{equation*} 
Finally if we write $N=N(a,b,\ldots)$, then this means that the constant $N$ depends only on $a,b,\ldots$.

\mysection{Main result}

We first introduce some definitions and facts related to the fractional calculus. For more details, see e.g. \cite{baleanu12fractional,podlubny98fractional,richard14fractional,samko93fractional}.
For $\alpha>0$ and $\varphi\in L_1((0,T))$, the Riemann-Liouville fractional integral  of order $\alpha$ is defined by
$$ I^\alpha_{t} \varphi(t) :=(I^\alpha_{t} \varphi) (t):= \frac{1}{\Gamma(\alpha)}\int_0^t(t-s)^{\alpha-1}\varphi(s)ds,\quad  t\leq T.
$$
By H\"older's inequality, for any $p \geq 1$, 
\begin{equation*}
\|I^\alpha_t \varphi \|_{L_p((0,T))}\leq N(\alpha,p,T)\| \varphi \|_{L_p((0,T))}.
\end{equation*}
It is also easy to check that  if $\varphi$ is bounded then $I^{\alpha}_t \varphi(t)$ is a continuous function satisfying $I^{\alpha}_t\varphi(0)=0$.   

Let $n$ be the integer such that $n-1\leq \alpha<n$. If $\varphi$ is $(n-1)$-times differentiable, and $(\frac{d}{dt})^{n-1}I_{t}^{n-\alpha}\varphi$ is absolutely continuous on $[0,T]$, then the Riemann-Liouville fractional derivative $D_t^\alpha \varphi$ and  the Caputo fractional derivative $\partial_t^\alpha\varphi$ are defined as follows.
\begin{equation*}
D^{\alpha}_t \varphi (t):=(D^{\alpha}_t \varphi) (t):=(I_{t}^{n-\alpha}\varphi)^{(n)}(t),
\end{equation*}
\begin{equation}\label{Caputo_Riemann}
\partial^{\alpha}_{t}\varphi (t) :=D^{\alpha}_{t}\left(\varphi(t)-\sum_{k=0}^{n-1}\frac{t^{k}}{k !}\varphi^{(k)}(0)\right)(t).
\end{equation}
Obviously, $D^\alpha_t\varphi = \partial_t^\alpha \varphi$  if $\varphi(0) = \varphi^{(1)}(0) = \cdots = \varphi^{(n -1)}(0) = 0$.  
It is easy to show that, for any $\alpha,\beta \geq0$, 
$$
I^{\alpha+\beta}\varphi(t)=I^{\alpha}I^{\beta}\varphi(t), \quad
D^\alpha D^\beta\varphi = D^{\alpha+\beta}\varphi,
$$
and
$$ 
D^\alpha I^\beta \varphi = \begin{cases} D^{\alpha-\beta}\varphi &\mbox{ if }\alpha>\beta \\ I^{\beta-\alpha}\varphi &\mbox{ if }\alpha\leq\beta\end{cases}
$$
Furthermore, if $\varphi$ is sufficiently smooth (say, $\varphi\in C^{n}([0,T]))$  and $\varphi(0)=\cdots =\varphi^{(n-1)}(0)=0$, then
\begin{equation}
    \label{eqn 10.1}
I^{\alpha}_t\partial^{\alpha}_t \varphi(t): = I^{\alpha}_t (\partial^{\alpha}_t \varphi) (t)=\varphi(t), \quad \forall \, t\in [0,T].
\end{equation}
Consequently, if $\varphi\in C^{2}([0,T])$ and $\alpha\in (0,2)$, then $\partial^{\alpha}_t \varphi=f$  is equivalent to
\begin{equation}
   \label{eqn 4.29.5}
   \varphi(t)-\varphi(0)-1_{\alpha>1}\varphi'(0)t=I^{\alpha}_t f(t), \quad \forall\,  t\in [0,T].
   \end{equation}

Now we introduce the class of weights used in this article.
\begin{definition}[$A_p$-weight]\label{def 05.06.1}
Let $1<p<\infty$. We write $w\in A_p$ if $w(x)$ is a nonnegative measurable function on $\bR^d$ such that
\begin{equation*}
[w]_{p}:=\sup_{x_{0}\in\bR^d,r>0}\left(\aint_{B_{r}(x_{0})}w(x)dx\right)\left(\aint_{B_{r}(x_{0})}w(x)^{-1/(p-1)}dx\right)^{p-1}<\infty,
\end{equation*}
where 
\begin{equation*}
\aint_{B_{r}(x_{0})}w(x)dx=\frac{1}{|B_{r}(x_{0})|}\int_{B_{r}(x_{0})}w(x)dx.
\end{equation*}
If $w\in A_{p}$, then $w$ is said to be an $A_p$ weight.
\end{definition}

\begin{remark} \label{rmk 06.20.1}
It is well known that the Hardy-Littlewood maximal function 
 is bounded in $L_p(w dx)$ if any only if $w$ is an $A_p$ weight (see e.g.  \cite{grafakos2009modern}). Therefore, if one uses an approach based on   sharp and maximal functions, then it is natural to consider $L_p$-spaces with $A_p$ weights for full generality.
\end{remark}

\begin{remark}\label{rmk 09.27.21:52}
The class $A_{p}$  is increasing as $p$ increases, and it holds that
\begin{equation*}
A_{p}=\bigcup_{q\in(1,p)} A_{q}.
\end{equation*}
More precisely, for any $w\in A_{p}$, one can find $q<p$, which depends on $d,p$, and $[w]_{p}$ such that $w\in A_{q}$.
\end{remark}


Let $p,q\in(1,\infty)$, $n\in\bN$, and $T\in(0,\infty]$. For $w_{1} = w_{1}(x)\in A_{p}$ and $w_{2}=w_{2}(t)\in A_{q}$, we define
$$
L_{p}(w_{1})=L_{p}(\R^{d},w_{1}dx),\quad H_p^2(w_{1}) = \{ u\in L_p(w_{1}):\, D^{\gamma}u\in L_{p}(w_{1}),\,|\gamma|\leq 2 \},
$$
$$
\bL_{q,p}(w_{2},w_{1},T)=L_{q}((0,T), w_{2}dt ; L_{p}(w_{1})),
$$
and
$$
\bH^{0,2}_{q,p}(w_{2},w_{1},T)=L_{q}((0,T),w_{2}dt ; H^{2}_{p}(w_{1})).
$$
We omit $T$ if $T = \infty$. For example,
\begin{equation*}
\bL_{q,p}(w_{2},w_{1}) = \bL_{q,p}(w_{2},w_{1},\infty) = L_q((0,\infty),w_{2}dt ;L_p(w_{1})).
\end{equation*}
The norms of these function spaces are defined in a natural way. For example,
\begin{equation*}
\begin{aligned}
\|f\|_{\bL_{q,p}(w_{2},w_{1},T)}&:=\left(\int_0^T \|f(t,\cdot)  \|_{L_p(w_{1})}^qw_{2}(t) dt\right)^{1/q} \\
&=\left(\int_{0}^{T}\left( \int_{\R^{d}}|f(t,x)|^{p}w_{1}(x)dx \right)^{q/p}w_{2}(t)dt\right)^{1/q}.
\end{aligned}
\end{equation*}

\begin{remark}\label{rmk 05.08.1}

 (i). Suppose that $w\in A_{p}$. Then the weighted Sobolev spaces have similar properties as the usual $L_p$-spaces. For example, one can check that $C_c^\infty(\R^{d})$ is dense in $H_p^2(w)$ and $H_p^2(w)$ is a Banach space. Also,  the dual space of $L_p(w)$ is $L_{p'}(\tilde{w})$, where  
\begin{equation} \label{dual}
1/p+1/{p'}=1,\quad \tilde{w}=w^{-\frac{1}{p-1}}.
\end{equation}
In other words,  for any bounded linear functional $\Lambda$ defined on $L_p(w)$ there is a unique $g\in L_{p'}(\tilde{w})$ such that $\Lambda f=\int_{\bR^d}fg \,dx$ for any $f\in L_p(w)$, and $\|\Lambda\|=\|g\|_{L_{p'}(\tilde{w})}$. 

(ii). Let $\phi \in C^{\infty}_c(B_1(0))$ and $\phi_{\varepsilon}(x):=\varepsilon^{-d}\phi(x/\varepsilon)$, $\varepsilon \in (0,1]$.  Then, it is easy to check that 
$$
|\phi_{\varepsilon}(x)|\leq \frac{\|\phi\|_{L^{\infty}}}{|x|^d},  \quad \forall \, x\in \bR^d,
$$
$$
|\phi_{\varepsilon}(x)-\phi_{\varepsilon}(y)|\leq \frac{4^{d+1} \|D\phi\|_{L^{\infty}} |x-y|}{(|x|+|y|)^{d+1}} \quad \text{if} \quad 2|x-y|\leq \max\{|x|, |y|\}.
$$
By \cite[Corollary 9.4.7]{grafakos2009modern},   for any $f\in L_{p}(w)$, it holds that
\begin{equation*}
\|\int_{\R^{d}}f(y)\phi_{\varepsilon}(\cdot-y)dy\|_{L_{p}(w)}\leq N_0 \|f\|_{L_{p}(w)}, \quad \forall \varepsilon \in (0,1],
\end{equation*}
where the constant $N_0$ depends only on $d,p,[w]_{p}$, and $\|\phi\|_{L_{1}}+\|\phi\|_{L_{\infty}}+\|D\phi\|_{L_{\infty}}$. This implies that for $f\in L_{p}(w)$, the convolution
\begin{equation*}
f^{\varepsilon}(x):=\varepsilon^{-d}\int_{\R^{d}}f(y)\phi((x-y)/\varepsilon)dy
\end{equation*}
converges to $f$ in $L_{p}(w)$ as $\varepsilon \downarrow 0$. Indeed, for given small $\delta>0$, take $h\in C^{\infty}_c(\bR^d)$ such that $\|f-h\|_{L_p(w)}<\delta$. Then
\begin{eqnarray*}
\|f-f^{\varepsilon}\|_{L_p(w)}&\leq &\|f-h\|_{L_p(w)}+ \|f^{\varepsilon}-h^{\varepsilon}\|_{L_p(w)}+\|h-h^{\varepsilon}\|_{L_p(w)}\\
&\leq& \delta+N_0\delta +\|h-h^{\varepsilon}\|_{L_p(w)}.
\end{eqnarray*}
The last term above converges to $0$ as $\varepsilon \downarrow 0$ since $h^{\varepsilon}\to h$ uniformly on $\bR^d$.

\end{remark}

Now we introduce our solution space and related facts.

\begin{definition}[Solution space]\label{def 05.07.2}
Let $0<\alpha<2$, $1<p,q<\infty$, $w_{1}\in A_{p}(\R^{d})$, $w_{2}\in A_{q}(\R)$, and $T<\infty$. We write $u\in \bH^{\alpha,2}_{q,p}(w_{2},w_{1},T)$ if there exists a defining sequence $u_{n}\in C^{\infty} ([0,\infty)\times\R^{d})$ such that $u_{n}$ converges to $u$ in $\bH^{0,2}_{q,p}(w_{2},w_{1},T)$, and $\partial^{\alpha}_{t}u_{n}$ is a Cauchy in $\bL_{q,p}(w_{2},w_{1},T)$. For $u \in \bH_{q,p}^{\alpha,2}(w_{2},w_{1},T)$,  we write
$$
f=\partial^{\alpha}_t u
$$
if $f$ is the limit of $\partial^{\alpha}_{t}u_{n}$ in $\bL_{q,p}(w_{2},w_{1},T)$. For $u\in \bH^{\alpha,2}_{q,p}(w_{2},w_{1},T)$, we write  $u\in \bH^{\alpha,2}_{q,p,0}(w_{2},w_{1},T)$ if there is a defining sequence $u_{n}\in C^{\infty}([0,\infty)\times\R^{d})$ such that 
$$
u_n(0,x)=1_{\alpha>1}\partial_{t}u(0,x)=0.
$$ 
Finally, define
$$
\bH^{\alpha,2}_{p,p,0}(T)=\bH^{\alpha,2}_{p,p,0}(1,1,T).
$$
\end{definition}

\begin{lemma}\label{lem 05.08.1}
 Let $1<p,q<\infty$, $w_{1}\in A_{p}(\R^{d})$, $w_{2}\in A_{q}(\R)$, and $T<\infty$.
 
 (i) The spaces $\bH^{\alpha,2}_{q,p}(w_{2},w_{1},T)$ and  $\bH^{\alpha,2}_{q,p,0}(w_{2},w_{1},T)$ are Banach spaces with respect to the norm
\begin{equation*}
\|u\|_{\bH^{\alpha,2}_{q,p}(w_{2},w_{1},T)}:=\|u\|_{\bH^{0,2}_{q,p}(w_{2},w_{1},T)}+\|\partial^{\alpha}_{t}u\|_{\bL_{q,p}(w_{2},w_{1},T)}.
\end{equation*}

(ii) $C^{\infty}_c((0,\infty)\times \bR^d)$ is dense in $\bH^{\alpha,2}_{q,p,0}(w_{2},w_{1},T)$. 
\end{lemma}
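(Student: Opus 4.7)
For part (i), I would first establish well-definedness of the norm: if $v_{n} \in C^{\infty}([0,\infty) \times \bR^{d})$ satisfies $v_{n} \to 0$ in $\bH^{0,2}_{q,p}(w_{2},w_{1},T)$ and $\partial^{\alpha}_{t} v_{n} \to g$ in $\bL_{q,p}(w_{2},w_{1},T)$, then $g = 0$. For each smooth $v_n$, formula \eqref{eqn 10.1} applied to $v_n - P_n$ yields
\[
I^{\alpha}_{t} \partial^{\alpha}_{t} v_{n} = v_{n} - P_{n}, \qquad P_n(t,x) := v_n(0,x) + 1_{\alpha>1}\, t\, \partial_{t} v_n(0,x).
\]
Passing to the limit in $\bL_{q,p}(w_{2},w_{1},T)$ using the weighted boundedness of $I^{\alpha}_{t}$ (proved below), the limit $P_\infty$ of $P_n$ equals $-I^{\alpha}_{t} g$ and is a polynomial of degree at most one in $t$ with $L_p(w_1)$ coefficients; its Caputo derivative is zero, while $\partial^{\alpha}_{t} I^{\alpha}_{t} g = g$ forces $g = 0$. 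Completeness is then a diagonal extraction: for a Cauchy sequence $u^{(k)}$ in the stated norm with defining sequences $u^{(k)}_{n}$, choose $n_{k}$ so that $u^{(k)}_{n_{k}}$ serves as a defining sequence for the common limit, and the zero-initial-data condition passes to the limit for the subspace $\bH^{\alpha,2}_{q,p,0}$.

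The key analytic ingredient for part (ii) is the weighted bound
\[
\|I^{\alpha}_{t} g\|_{L_{q}((0,T), w_{2}\, dt)} \leq N(\alpha, T)\, \|g\|_{L_{q}((0,T), w_{2}\, dt)}, \qquad w_{2} \in A_{q}(\bR),
\]
which I would derive from the pointwise control $|I^{\alpha}_{t} g(t)| \leq N(\alpha, T)\,\mathcal{M} g(t)$ on $(0,T)$. For $\alpha \in (0,1)$ this follows from a dyadic decomposition of $[0, t]$: on the shell $\{s : 2^{-k-1} t \leq t-s \leq 2^{-k} t\}$ the kernel $(t-s)^{\alpha-1}$ is bounded by $(2^{-k-1} t)^{\alpha-1}$, and the geometric sum in $k$ converges. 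For $\alpha \in [1, 2)$ the kernel is bounded by $T^{\alpha-1}$ on $[0,T]$, giving the bound immediately. The Hardy-Littlewood maximal theorem for $w_{2} \in A_{q}$ (Remark 2.3), combined with Minkowski's inequality, extends the estimate to $\bL_{q,p}(w_{2},w_{1},T)$.

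To approximate $u \in \bH^{\alpha,2}_{q,p,0}(w_{2},w_{1},T)$, I would proceed in two stages. \textbf{First}, spatial mollification: set $u^{\varepsilon}(t,x) := (u(t, \cdot) \ast \eta_{\varepsilon})(x)$ and $f^{\varepsilon}(t,x) := (f(t, \cdot) \ast \eta_{\varepsilon})(x)$ with $f := \partial^{\alpha}_{t} u$ and $\eta_{\varepsilon}$ a standard spatial mollifier. Applying Remark 2.3 to $u$ and $f$ yields $u^{\varepsilon} \to u$ in $\bH^{0,2}_{q,p}$ and $\partial^{\alpha}_{t} u^{\varepsilon} = f^{\varepsilon} \to f$ in $\bL_{q,p}$, so $u^{\varepsilon} \to u$ in $\bH^{\alpha,2}_{q,p,0}$; moreover, passing to the limit in $u^{\varepsilon}_{n} = I^{\alpha}_{t} f^{\varepsilon}_{n}$ along a defining sequence $u_{n}$ with zero initial data gives $D^{\beta}_{x} u^{\varepsilon} = I^{\alpha}_{t} D^{\beta}_{x} f^{\varepsilon}$ in $\bL_{q,p}$ for $|\beta| \leq 2$, with $D^{\beta}_{x} f^{\varepsilon} \in \bL_{q,p}$ since $f^{\varepsilon}$ is smooth in $x$. \textbf{Second}, approximate $f^{\varepsilon}$ in $\bH^{0,2}_{q,p}(w_{2},w_{1},T)$ by $f^{\varepsilon, \delta} \in C^{\infty}_{c}((0, T) \times \bR^{d})$ via a spatial cutoff $\chi_{R}$, a temporal truncation to $[\delta, T-\delta]$, and a joint $(t,x)$-mollification — each a bounded operation on the weighted spaces by Remark 2.3. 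Setting $v^{\varepsilon, \delta} := I^{\alpha}_{t} f^{\varepsilon, \delta}$, the weighted boundedness of $I^{\alpha}_{t}$ together with the smooth identity $D^{\beta}_{x} v^{\varepsilon, \delta} = I^{\alpha}_{t} D^{\beta}_{x} f^{\varepsilon, \delta}$ yields $v^{\varepsilon, \delta} \to u^{\varepsilon}$ in $\bH^{\alpha,2}_{q,p,0}$. Multiplying $v^{\varepsilon,\delta}$ by a smooth cutoff $\psi(t)$ equal to $1$ on $[0, T+1]$ and vanishing for $t \geq T+2$ produces an element of $C^{\infty}_{c}((0, \infty) \times \bR^{d})$ agreeing with $v^{\varepsilon,\delta}$ on $(0, T)$, and a diagonal extraction in $(\varepsilon, \delta)$ completes the proof of density.

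The main obstacle is the weighted $L_{q}$-boundedness of $I^{\alpha}_{t}$, which simultaneously underpins the uniqueness of $\partial^{\alpha}_{t} u$ in (i) and the inversion-based approximation in (ii); the remaining steps are careful but standard arrangements of mollifications, cutoffs, and diagonal extractions.
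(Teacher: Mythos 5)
For part (ii) (the density statement, which is what the paper actually proves) your route is genuinely different from the paper's and is sound. The paper mollifies $u$ directly in time with a \emph{forward-shifted} mollifier $\eta_1\in C_c^\infty((1,2))$: after extending $u$ by $0$ for $t<0$ one gets $u^{\varepsilon}(t)=0$ for $t<\varepsilon$, and the identity $I^{n-\alpha}_t u^\varepsilon(t)=\varepsilon^{-1}\int(I^{n-\alpha}u)(t-r)\eta_1(r/\varepsilon)\,dr$ is used to check $\partial^{\alpha}_t u^{\varepsilon}=f^{\varepsilon}$ by hand; convergence is then by uniform convergence plus dominated convergence. You instead approximate $f=\partial_t^\alpha u$ by $f^{\varepsilon,\delta}\in C_c^\infty((0,T)\times\R^d)$ and reconstruct $v^{\varepsilon,\delta}:=I^\alpha_t f^{\varepsilon,\delta}$, which automatically vanishes near $t=0$; the convergence $v^{\varepsilon,\delta}\to u^\varepsilon$ is then carried entirely by the weighted $L_q$-boundedness of $I^\alpha_t$, which you correctly deduce from the pointwise bound $|I^\alpha_t g(t)|\le N(\alpha,T)\mathcal{M}g(t)$ on $(0,T)$. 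Both work; your approach packages the analytic content into a single bounded-operator fact, while the paper avoids invoking $I^\alpha_t$-boundedness at the cost of a more hands-on computation with the Caputo formula. (Small point: the mollification/convergence estimate you want is in Remark 2.4, not Remark 2.3; Remark 2.3 is about the self-improving property of $A_p$ classes.)

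For part (i), which the paper only calls ``straightforward,'' your well-definedness argument contains a genuine circularity. You conclude $g=0$ from ``$\partial_t^\alpha P_\infty=0$'' (Caputo of a polynomial of degree $<n$) together with ``$\partial_t^\alpha I^\alpha_t g=g$'' and $P_\infty=-I^\alpha_t g$. But the second identity is only valid when $(I^\alpha_t g)(0)=0$ (and its $t$-derivative vanishes at $0$ when $\alpha>1$), i.e.\ precisely when $P_\infty(0)=0$, which is what you are trying to establish. Indeed, applying the Caputo definition directly to $P_\infty=-a-tb$ gives $\partial_t^\alpha P_\infty=0$ again, not $-g$, so nothing is extracted. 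Worse, the implication is actually delicate: if $I^\alpha_t g\equiv -a\neq 0$ then $g(t,x)=-a(x)t^{-\alpha}/\Gamma(1-\alpha)$, which lies in $L_q((0,T),w_2\,dt)$ for, e.g., $w_2(t)=|t|^\gamma\in A_q$ with $\gamma\in(\alpha q-1,\,q-1)$ (and even for $w_2=1$ when $\alpha q<1$), and one can construct smooth $v_n\to 0$ in $\bH^{0,2}_{q,p}$ with $v_n(0,\cdot)\not\to 0$ and $\partial_t^\alpha v_n\to g$ in $\bL_{q,p}$. So for the space $\bH^{\alpha,2}_{q,p}$ the well-definedness claim needs a different argument or a different formulation. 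By contrast, for $\bH^{\alpha,2}_{q,p,0}$ the zero-initial condition kills $P_n$ and your scheme cleanly gives $I^\alpha_t g=0$, hence $g=0$ by injectivity of $I^\alpha_t$; that is the only case the paper really uses.
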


\begin{proof}
(i) It can be readily proved by following a straightforward argument. 
\\
(ii) Let $u\in \bH^{\alpha,2}_{q,p,0}(w_{2},w_{1},T)$ be given. We will construct a defining sequences belonging to $C^{\infty}_c((0,\infty)\times \bR^d)$. First, note that 
by the definition of $\mathbb{H}^{\alpha,2}_{q,p,0}(w_{2},w_{1},T)$, we may assume $u\in C^{\infty}([0,\infty)\times\R^{d})$ and satisfies
$u(0,x)=1_{\alpha>1}\partial_{t}u(0,x)=0$.  Also considering multiplications with smooth cut-off functions depending only on $x$, we may further assume that $u$ has compact support with respect to $x$. 

Extend $u(t,x)=0$ and $f(t,x)=f(0,x)$ for $t<0$. Then, for any multi-index $\alpha$,  $D^{\alpha}_{x} u(t,x)$ and $f(t,x)$ are  continuous in $t\in \bR$ since $D^{\alpha}_{x} u(0,x)=0$.  Take $\eta_{1}\in \Ccinf((1,2))$ with the unit integral and let
\begin{equation*}
\begin{aligned}
u^{\varepsilon}(t,x)=\varepsilon^{-1}\int_{\R}u(s,x)\eta_{1}((t-s)/\varepsilon)ds,
\\
f^{\varepsilon}(t,x)=\varepsilon^{-1}\int_{\R}f(s,x)\eta_{1}((t-s)/\varepsilon)ds.
\end{aligned}
\end{equation*}
Then $u^{\varepsilon}(t)=0$ if $t<\varepsilon$,  $u^{\varepsilon}$ is infinitely differentiable in $(t,x)$ and 
\begin{equation}
\label{eqn 10.19}
|u^{\varepsilon}(t,x)-u(t,x)|+ |Du^{\varepsilon}(t,x)-Du(t,x)|+|D^2u^{\varepsilon}(t,x)-D^2u(t,x)| \to 0
\end{equation}
as $\varepsilon \downarrow 0$,  uniformly on $[0,T]\times \bR^d$.  Similarly, $f^{\varepsilon}(t,x) \to f(t,x)$ uniformly on $[0,T]\times \bR^d$.  Let $\eta_{2}\in \Ccinf(\R)$ so that $\eta_{2}(t)=1$ for $t\leq T$, and $\eta_{2}=0$ for $t\geq T+1$. Set 
$$v^{\varepsilon}:=\eta_{2}u^{\varepsilon}\in\Ccinf((0,\infty)\times\R^{d}).
$$
 Let $n$ be an integer so that $n-1\leq \alpha<n$, then we have
\begin{equation*}
\begin{aligned}
I^{n-\alpha}_{t}u^{\varepsilon}(t)&=\frac{1}{\Gamma(n-\alpha)}\int_0^t(t-s)^{n-\alpha-1}u^{\varepsilon}(s)ds
\\
&=\frac{1}{\Gamma(n-\alpha)}\int_0^t(t-s)^{n-\alpha-1}\varepsilon^{-1}\int_{\R}u(s-r)\eta_{1}(r/\varepsilon)drds
\\
&=\frac{1}{\Gamma(n-\alpha)}\int_{\R}\varepsilon^{-1}\left(\int_{0}^{t-r}(t-r-s)^{n-\alpha-1}u(s)ds \right)\eta_{1}(r/\varepsilon)dr.
\end{aligned}
\end{equation*}
Taking derivative $(d/dt)^{n}$ to $I^{n-\alpha}_t u^{\varepsilon}(t)$  and using \eqref{Caputo_Riemann} (recall that $u(0)=1_{1<\alpha}\partial_{t}u(0)=0$), we have 
$$\partial^{\alpha}_{t}u^{\varepsilon}=f^{\varepsilon}.
$$
 Also, since $\eta_{2}=1$ if $t\leq T$, we have $\partial^{\alpha}_{t}v^{\varepsilon}=f^{\varepsilon}$ for all $t\leq T$.
Moreover, as $\varepsilon \downarrow 0$, by  \eqref{eqn 10.19} and the dominated convergence theorem,
$$v^{\varepsilon}\to u \quad \text{in} \quad\mathbb{H}^{0,2}_{q,p}(w_{2},w_{1},T),
$$
$$ f^{\varepsilon}\to f \quad \text{in} \quad  \mathbb{L}_{q,p}(w_{2},w_{1},T).
$$
Therefore, $v^{\varepsilon}\in\Ccinf((0,\infty)\times\R^{d}) $ converges to $u$ in $\mathbb{H}^{\alpha,2}_{q,p,0}(w_{2},w_{1},T)$. The lemma is proved.
\end{proof}

Recall (see \eqref{eqn 4.29.5}) that if $u(\cdot,x)\in C^2([0,T])$ then
$$
 u(t,x)-u(0,x)-1_{\alpha>1}u'(0,x)t=I^{\alpha}_t \partial^{\alpha}_t u(t,x), \quad \forall\,  t\in [0,T].
 $$
 If $u\in \bH^{\alpha,2}_{q,p,0}(w',w,T)$ then 
 $$
 u(0,x)=0, \quad 1_{\alpha>1}\partial_{t}u(0,x)=0
 $$
 in the following sense. (The result of Proposition \ref{lem 05.08.2} is not used elsewhere)

\begin{proposition}\label{lem 05.08.2}
Let $u\in\mathbb{H}^{\alpha,2}_{q,p}(w_{2},w_{1},T)$ and $f\in \mathbb{L}_{q,p}(w_{2},w_{1},T)$. Then the following are equivalnet:
\begin{enumerate}[(i)]
\item
$u\in\mathbb{H}^{\alpha,2}_{q,p,0}(w_{2},w_{1},T)$, and $\partial^{\alpha}_{t}u=f$.
\item
For any $\phi\in\Ccinf(\R^{d})$
\begin{equation}\label{equivalent_def_frac_deriv_2}
(u(t),\phi)=I^{\alpha}_{t}(f(t),\phi),\quad t\leq T \quad (a.e.).
\end{equation}
\end{enumerate}
\end{proposition}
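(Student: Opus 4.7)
My plan is to prove the two directions separately, with most of the work in $(ii)\Rightarrow(i)$.

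For $(i)\Rightarrow(ii)$, I would take a defining sequence $u_{n}\in C^{\infty}([0,\infty)\times\R^{d})$ from Definition~\ref{def 05.07.2} with $u_{n}(0,\cdot)=1_{\alpha>1}\partial_{t}u_{n}(0,\cdot)=0$. Since each $u_{n}$ is smooth and vanishes (together with $\partial_{t}u_{n}$ when $\alpha>1$) at $t=0$, identity \eqref{eqn 10.1} gives $u_{n}(t,x)=I^{\alpha}_{t}\partial^{\alpha}_{t}u_{n}(t,x)$ pointwise, and pairing with $\phi\in\Ccinf(\R^{d})$ yields $(u_{n}(t),\phi)=I^{\alpha}_{t}(\partial^{\alpha}_{t}u_{n}(t),\phi)$. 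Using $\phi\in L_{p'}(\tilde{w}_{1})$ (finite because $\tilde{w}_{1}\in L^{1}_{loc}$ for $w_{1}\in A_{p}$), both $(u_{n}(t),\phi)\to(u(t),\phi)$ and $(\partial^{\alpha}_{t}u_{n}(t),\phi)\to(f(t),\phi)$ converge in $L_{q}((0,T),w_{2}dt)$. A Fubini--H\"older bound relying on $T<\infty$ and local integrability of $w_{2}^{-1/(q-1)}$ shows $I^{\alpha}_{t}:L_{q}((0,T),w_{2}dt)\to L^{1}((0,T))$ is bounded, so extracting a subsequence I obtain pointwise a.e.\ convergence of both sides, yielding \eqref{equivalent_def_frac_deriv_2}.

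For $(ii)\Rightarrow(i)$, the plan is to construct an explicit defining sequence via two-variable mollification. Pick $\eta_{1}\in\Ccinf((1,2))$ and $\phi\in\Ccinf(B_{1})$ of unit integral, set $\eta_{1,\varepsilon}(t)=\varepsilon^{-1}\eta_{1}(t/\varepsilon)$, $\phi_{\varepsilon}(x)=\varepsilon^{-d}\phi(x/\varepsilon)$, extend $u,f\equiv 0$ for $t<0$, and define
\begin{equation*}
u^{\varepsilon}(t,x)=\int\!\!\int u(s,y)\eta_{1,\varepsilon}(t-s)\phi_{\varepsilon}(x-y)\,dy\,ds,\quad f^{\varepsilon}(t,x)=\int\!\!\int f(s,y)\eta_{1,\varepsilon}(t-s)\phi_{\varepsilon}(x-y)\,dy\,ds.
\end{equation*}
Since $\eta_{1,\varepsilon}$ is supported in $(\varepsilon,2\varepsilon)$, both $u^{\varepsilon}$ and $f^{\varepsilon}$ lie in $C^{\infty}([0,\infty)\times\R^{d})$ and vanish for $t<\varepsilon$, which in particular forces $u^{\varepsilon}(0,\cdot)=\partial_{t}u^{\varepsilon}(0,\cdot)=0$.

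The crux will be establishing $u^{\varepsilon}=I^{\alpha}_{t}f^{\varepsilon}$. Applying (ii) with the test function $\phi_{\varepsilon}(x-\cdot)$ gives, for a.e.\ $s$, $\int u(s,y)\phi_{\varepsilon}(x-y)\,dy = I^{\alpha}_{s}\bigl[\int f(\cdot,y)\phi_{\varepsilon}(x-y)\,dy\bigr]$. Substituting into the definition of $u^{\varepsilon}$, exchanging integrations by Fubini, and changing variables, the resulting double integral coincides with $I^{\alpha}_{t}f^{\varepsilon}(t,x)$; the effective integration regions match because $\eta_{1,\varepsilon}$ is supported on the positive half-line, preserving causality of the kernel $(\cdot)^{\alpha-1}_{+}$. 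This Fubini/change-of-variables verification is the main technical obstacle. Once the identity holds, $\partial^{\alpha}_{t}u^{\varepsilon}=f^{\varepsilon}$ follows from \eqref{eqn 10.1}. Standard mollification estimates---Remark~\ref{rmk 05.08.1}(ii) for the $x$-mollification and the analogous Hardy--Littlewood-based bound using $w_{2}\in A_{q}$ for the $t$-mollification---then give $D^{\gamma}_{x}u^{\varepsilon}\to D^{\gamma}_{x}u$ in $\bL_{q,p}(w_{2},w_{1},T)$ for $|\gamma|\leq 2$ and $f^{\varepsilon}\to f$ in $\bL_{q,p}(w_{2},w_{1},T)$. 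Hence $\{u^{\varepsilon}\}$ is a defining sequence realizing $u\in\bH^{\alpha,2}_{q,p,0}(w_{2},w_{1},T)$ with $\partial^{\alpha}_{t}u=f$.
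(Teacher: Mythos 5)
Your argument for $(i)\Rightarrow(ii)$ is essentially the paper's Step 1: take a defining sequence from Lemma \ref{lem 05.08.1}(ii), pair with $\phi$, apply \eqref{eqn 10.1} to each $u_n$, and pass to the limit using the duality bound from Remark \ref{rmk 05.08.1}. You add an explicit justification (the embedding $L_q((0,T),w_2dt)\hookrightarrow L_1((0,T))$ via $w_2^{-1/(q-1)}\in L_1$, then $I^\alpha$-boundedness on $L_1$ and a subsequence) for pushing the limit through $I^\alpha_t$, which the paper leaves implicit, and this is indeed the right way to close that gap. For $(ii)\Rightarrow(i)$ the paper does not give a proof at all but merely cites the unweighted case in another reference, whereas you give a self-contained construction by two-variable mollification with the one-sided kernel $\eta_{1,\varepsilon}$ (supported in $(\varepsilon,2\varepsilon)$) and $\phi_\varepsilon$. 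Your key step---applying \eqref{equivalent_def_frac_deriv_2} with test function $\phi_\varepsilon(x-\cdot)$ and then commuting the $s$-convolution with $I^\alpha$ by Fubini and the substitution $\sigma=t-s+r$ to get $u^\varepsilon=I^\alpha_t f^\varepsilon$---does check out, because the supports of $\eta_{1,\varepsilon}$ and the causal kernel $(t-\cdot)^{\alpha-1}_+$ are compatible after extending $u,f\equiv 0$ for $t<0$; the identity $\partial^\alpha_t u^\varepsilon=f^\varepsilon$ then follows from $D^\alpha I^\alpha g=g$ and the vanishing of $u^\varepsilon,\partial_t u^\varepsilon$ at $t=0$. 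This mirrors the paper's own construction in Lemma \ref{lem 05.08.1}(ii), with the added $x$-mollification because here $u$ is not a priori smooth in $x$, and the $A_p$/$A_q$-boundedness of the convolutions (Remark \ref{rmk 05.08.1}(ii) and its one-dimensional analogue for $\eta_{1,\varepsilon}$) gives the required convergence in $\bH^{0,2}_{q,p}(w_2,w_1,T)$ and $\bL_{q,p}(w_2,w_1,T)$. One small omission: to obtain a defining sequence in $C^\infty([0,\infty)\times\R^d)$ you must fix what $u^\varepsilon$ is for $t$ beyond $T$; a smooth time cutoff $\eta_2$ equal to $1$ on $[0,T]$, exactly as in Lemma \ref{lem 05.08.1}(ii), patches this without affecting the norms over $(0,T)$.
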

\begin{proof}
\textbf{Step 1.} Suppose (i) holds. Then, by Lemma \ref{lem 05.08.1}(ii), there exists  $u_{n}\in\Ccinf((0,\infty)\times\R^{d})$ such that $u_n\to u$ in $\bH^{\alpha,2}_{q,p.0}(w_{2},w_{1},T)$. Note that since $w_{2}\in L_{1}([0,T])$ and $w_{2}>0$  almost everywhere, it follows that 
\begin{equation*}
\int_{E}w_{2}(t)dt=0
\end{equation*}
if and only if $|E|=0$. Therefore, by Remark \ref{rmk 05.08.1}, for any $\phi\in\Ccinf(\R^{d})$
\begin{eqnarray*}
&&|(u_n(t)-u(t),\phi)|+|(\partial^{\alpha}_t u_n(t)-\partial^{\alpha}_t u, \phi)|\\
&\leq& \|\phi\|_{L_{p'}(\tilde{w})} \left(\|u_n(t)-u(t)\|_{L_p(w)}
+\|\partial^{\alpha}_t u_n(t)-\partial^{\alpha}_t u(t)\|_{L_p(w)}\right) \to 0
\end{eqnarray*}
as $n \to \infty$ ($w_{2}(t)dt$-a.e., or equivalently $dt$-a.e.). Therefore, for almost all $t\leq T$, 
\begin{equation*}
\begin{aligned}
(u(t),\phi) &= \lim_{n\to\infty}(u_n(t),\phi)
=\lim_{n\to\infty} \int_{\R^{d}} I_t^\alpha \partial_t^\alpha u_n(t,x) \phi(x) dx 
= I_t^\alpha (  \partial_t^\alpha u(t,\cdot),\phi).
\end{aligned}
\end{equation*}
In the second equality above we used \eqref{eqn 10.1}. Therefore, we have \eqref{equivalent_def_frac_deriv_2}.

\textbf{Step 2.} There is a version of proof in \cite[Remark 2.9]{kim16timefractionalspde} for the case when $w_{2}=w=1$. The general case can be proved by repeating the proof of  \cite[Remark 2.9]{kim16timefractionalspde} and using   Remark \ref{rmk 05.08.1}. 
\end{proof}

Finally, we introduce our main result.

\begin{theorem} \label{theorem 5.1}
Let $0<\alpha<2$, $1<p,q<\infty$, $w_{1}=w_{1}(x)\in A_{p}(\R^{d})$, $w_{2}=w_{2}(t)\in A_{q}(\R)$, $T<\infty$, and $f\in \bL_{q,p}(w_{2},w_{1},T)$. Then the equation
\begin{equation} \label{eqn 05.09.1}
\partial^{\alpha}_{t}u=\Delta u +f, \quad t>0\,; \quad u(0,\cdot)=1_{\alpha>1}\partial_{t}u(0,\cdot)=0
\end{equation} 
has a unique solution $u$ in $\bH_{q,p,0}^{\alpha,2}(w_{2},w_{1},T)$. Also, the solution $u$ satisfies
\begin{equation} \label{eqn 05.08.1-1}
\|u_{xx}\|_{\bL_{q,p}(w_{2},w_{1},T)}\leq N_0\|f\|_{\bL_{q,p}(w_{2},w_{1},T)},
\end{equation}
\begin{equation} \label{eqn 05.08.1}
\|u\|_{\bH_{q,p}^{\alpha,2}(w_{2},w_{1},T)}\leq N_1\|f\|_{\bL_{q,p}(w_{2},w_{1},T)},
\end{equation}
where  $N_0=N_0(\alpha,d,p,q,[w_{1}]_{p}, [w_{2}]_{q})$, and $N_1=N_1(\alpha,d,p,q,[w_{1}]_{p}, [w_{2}]_{q},T)$.
\end{theorem}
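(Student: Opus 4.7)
The plan is to prove Theorem \ref{theorem 5.1} by first establishing the a priori estimate \eqref{eqn 05.08.1-1} for sufficiently smooth $u$, then deducing \eqref{eqn 05.08.1}, and finally obtaining existence by approximation. By Lemma \ref{lem 05.08.1}(ii), $C_c^\infty((0,\infty)\times\bR^d)$ is dense in $\bH^{\alpha,2}_{q,p,0}(w_2,w_1,T)$, so it suffices to prove both estimates for $u\in C_c^\infty$; they then extend to the entire solution space by taking limits of defining sequences. Existence for general $f\in\bL_{q,p}(w_2,w_1,T)$ follows by mollifying $f$ to $f_n\in C_c^\infty$, solving for smooth data (via the Mittag--Leffler fundamental solution, or alternatively by invoking the unweighted $L_p$-theory already available in the literature), and passing to the limit using the a priori estimate applied to $u_n-u_m$. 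Uniqueness is immediate from \eqref{eqn 05.08.1-1} applied to the difference of two solutions.

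\textbf{A priori estimate via sharp functions.} For smooth $u$ solving \eqref{eqn 05.09.1}, the key ingredient (supplied by Section 3) is a pointwise sharp function bound of the form
\begin{equation*}
(u_{xx})^\#(t_0,x_0) \le N\,\bigl(\bM\,|f|^{p_0}\bigr)^{1/p_0}(t_0,x_0)
\end{equation*}
for a parameter $p_0\in(1,\infty)$ of our choice, where $(\cdot)^\#$ is the (parabolic) sharp maximal function and $\bM$ is the Hardy--Littlewood maximal function. Using Remark \ref{rmk 09.27.21:52}, one picks $p_0\in(1,\min(p,q))$ so that $w_1\in A_{p/p_0}(\bR^d)$ and $w_2\in A_{q/p_0}(\bR)$. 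The weighted Fefferman--Stein sharp function theorem in $x$ (justified by $w_1\in A_p$) followed by the weighted Hardy--Littlewood maximal theorem in $t$ (justified by $w_2\in A_q$), combined with the boundedness of $\bM$ on $L_{p/p_0}(w_1\,dx)$ and on $L_{q/p_0}(w_2\,dt)$ granted by our choice of $p_0$, yields
\begin{equation*}
\|u_{xx}\|_{\bL_{q,p}(w_2,w_1,T)} \le N\,\bigl\|\bigl(\bM\,|f|^{p_0}\bigr)^{1/p_0}\bigr\|_{\bL_{q,p}(w_2,w_1,T)} \le N\,\|f\|_{\bL_{q,p}(w_2,w_1,T)},
\end{equation*}
which is \eqref{eqn 05.08.1-1}. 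For the full norm \eqref{eqn 05.08.1}, the equation directly gives $\|\partial^\alpha_t u\|_{\bL_{q,p}}\le \|u_{xx}\|_{\bL_{q,p}}+\|f\|_{\bL_{q,p}}$, and from the identity $u=I^\alpha_t(\Delta u+f)$ furnished by \eqref{eqn 4.29.5}, a Minkowski/Hardy-type bound in $t$ (whose constant depends on $T$) together with a standard weighted interpolation inequality controlling $\|u_x\|_{L_p(w_1)}$ by $\|u\|_{L_p(w_1)}$ and $\|u_{xx}\|_{L_p(w_1)}$ closes the argument.

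\textbf{Main obstacle.} The principal difficulty of the whole program is the pointwise sharp function estimate itself, which I expect to occupy Section 3: because $\partial^\alpha_t-\Delta$ is non-local in time, the usual localization arguments familiar from parabolic theory must be replaced by careful pointwise bounds on the Mittag--Leffler fundamental kernel and its derivatives, and this is precisely why the paper restricts to $a^{ij}=\delta^{ij}$. Within the proof of Theorem \ref{theorem 5.1} proper, the main technical subtlety is the simultaneous choice of the auxiliary exponent $p_0$ that places both weights into the correct Muckenhoupt class, a step that relies crucially on the openness property recorded in Remark \ref{rmk 09.27.21:52}; a secondary point is to verify that time-mollification (as in Lemma \ref{lem 05.08.1}) commutes with $\partial^\alpha_t$ when $f$ is extended by zero to negative times, so that the approximating problems are genuine smooth instances of \eqref{eqn 05.09.1} to which the a priori estimate applies.
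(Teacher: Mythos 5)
Your proposal is essentially correct and tracks the paper's overall architecture: obtain a pointwise sharp-function bound from Section~3, push it through weighted Fefferman--Stein and Hardy--Littlewood theorems after shrinking the exponent via Remark~\ref{rmk 09.27.21:52}, then get existence by mollifying $f$ and uniqueness from the a priori bound. One small inaccuracy: the Fefferman--Stein and Hardy--Littlewood steps are not applied separately ``in $x$'' and ``in $t$''; the paper works with sharp and maximal functions over cubes in $\bR^{d+1}$ adapted to the anisotropic quasi-metric $d_\alpha$, and invokes mixed-norm weighted versions of both theorems (Corollaries~2.6--2.7 of Dong--Kim) for the product weight $w_1(x)w_2(t)$.

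The one place where you genuinely diverge from the paper is in bounding $\|u\|$ and $\|u_x\|$. The paper proves sharp-function estimates for all three operators $L_0$, $L_1$, $L_2$ (Theorem~\ref{thm 05.06.2}), which costs extra work in Lemmas~\ref{2_2_bound_1}--\ref{2_2_bound_3} for the $k=0,1$ cases, and uses $u_n = L_0^T f_n$, $D_i u_n = L_1^{T,i}f_n$, $D_{ij}u_n = L_2^{ij}f_n$ directly. You instead propose to prove the sharp-function bound only for $u_{xx}$ and recover $\|u\|_{\bL_{q,p}}$ from $u = I_t^\alpha(\Delta u + f)$ and $\|u_x\|_{\bL_{q,p}}$ by weighted interpolation. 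This works: on $(0,T)$ one has the pointwise domination $I_t^\alpha g(t) \le N(\alpha,T)\, M g(t)$ by splitting the kernel dyadically, so $I_t^\alpha$ is bounded on $L_q((0,T),w_2\,dt)$ for $w_2 \in A_q$ with a $T$-dependent constant; and the interpolation inequality $\|Df\|_{L_p(w_1)} \le N\|f\|_{L_p(w_1)}^{1/2}\|D^2 f\|_{L_p(w_1)}^{1/2}$ holds for $w_1\in A_p$ (via the weighted Mikhlin multiplier theorem and scaling). Your route is arguably more economical and would let one skip the $k=0,1$ parts of the Section~3 lemmas, at the cost of these two auxiliary facts about weighted spaces. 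The paper's route, in exchange, cleanly separates the $T$-independence of $N_0$ (associated to $L_2$) from the $T$-dependence of $N_1$ (associated to $L_0$, $L_1$), which you handle correctly by noting the $T$-dependence of the $I_t^\alpha$ bound.

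Two minor remarks. First, $\|\partial_t^\alpha u\| \le \|\Delta u\| + \|f\| \le N(d)\|u_{xx}\| + \|f\|$ is what you want. Second, your ``secondary point'' about commuting time-mollification with $\partial_t^\alpha$ is handled in Lemma~\ref{lem 05.08.1}(ii) and does not reappear in the proof of Theorem~\ref{theorem 5.1} itself; within the theorem's proof the operative step is rather the representation of Lemma~\ref{representation_whole_space}\eqref{representation}, which identifies $u_n$ with $L_0^T f_n$ after extending $f_n$ by zero for $t<0$.
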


\mysection{Sharp function estimates of solutions}

In this section we prove a priori estimates  \eqref{eqn 05.08.1-1} and \eqref{eqn 05.08.1}  based on the sharp function estimates of solutions.

Let $p(t,x)$ be the fundamental solution to the following equation on $\bR^d$.
\begin{equation}\label{3.1}
\partial^\alpha_t u(t,x) = \Delta u(t,x),\quad t>0\,; \quad u(0) = u_0,\,\, 1_{\alpha>1}\partial_{t}u(0) = 0.
\end{equation}
In other words, if $u_0$ is smooth enough then the solution to equation \eqref{3.1} is given by 
\begin{equation}
 \label{fundamental}
u(t,x) = p(t,\cdot)\ast u_0  = \int_{\bR^d}p(t,y)u_0(x-y)dy.
\end{equation}
It is well known (see e.g. Lemma  \ref {prop_of_p_q} below) that such $p$ exists and absolutely continuous in $t$. Define
\begin{equation*}
q(t,x) = \begin{cases}  I_t^{\alpha-1}p(t,x)~&\alpha\in(1,2), \\ p(t,x)~&\alpha=1, \\ D_t^{1-\alpha}p(t,x)~&\alpha \in(0,1). \end{cases}
\end{equation*}

In the following two lemmas we collect  some properties of $p(t,x)$ and $q(t,x)$. 
 
\begin{lemma} \label{prop_of_p_q}
\begin{enumerate}[(i)]
\item There exists a fundamental solution $p(t,x)$.   Moreover, for all $t\neq0$ and $x\neq0$, we have
\begin{equation*}
\partial_t^\alpha p(t,x) = \Delta p(t,x),\quad \frac{\partial p(t,x)}{\partial t} = \Delta q(t,x).
\end{equation*}
Also, for each $x\neq0$,  $\frac{\partial}{\partial t}p(t,x)\to0$ as $t\downarrow0$. Furthermore, $\frac{\partial}{\partial t}p(t,\cdot)$ is integrable in $\mathbb{R}^d$ uniformly on $t\in[\varepsilon,T]$ for any $\varepsilon>0$.
\item Let $d\geq1$, $\alpha\in(0,2)$, $m,n=0,1,2,\cdots$, and $R = t^{-\alpha}|x|^2$. Then there exist constants $N$ and $\sigma>0$ depending only on $m,n,d$ and $\alpha$ so that if $R\geq1$
\begin{equation}\label{bounds_of_q_2}
|\partial_t^nD_x^m q(t,x)| \leq N t^{\frac{-\alpha(d+m)}{2}-n+\alpha-1}\exp{\{-\sigma t^{-\frac{\alpha}{2-\alpha}}|x|^{\frac{2}{2-\alpha}}\}},
\end{equation}
and if $R\leq1$
\begin{equation} \label{bounds_of_q_1}
\begin{aligned}
|\partial_t^nD_x^m q(t,x)| &\leq N|x|^{-d-m}t^{-n+\alpha-1}(R^2+R^2|\ln R|\cdot 1_{d=2}) \\
& \quad +N|x|^{-d}t^{-n+\alpha-1}(R^{1/2} 1_{d=1}+R1_{d=2}+R^2|\ln R|\cdot 1_{d=4})1_{m=0}.
\end{aligned}
\end{equation}
In particular, for each $n\in\mathbb{N}$,
\begin{equation} \label{bounds_of_q_3}
|D_x^n q(1,x)| \leq N(d,\alpha,n)(|x|^{-d+2-n}\wedge|x|^{-d-n}).
\end{equation}
\item There is a scaling property of $q(t,x)$, i.e., 
\begin{equation}\label{scaling_of_q}
q(t,x) = t^{-\frac{\alpha d}{2}+\alpha-1}q(1,xt^{-\frac{\alpha}{2}}).
\end{equation}
\end{enumerate}
\end{lemma}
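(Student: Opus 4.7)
The plan is to construct $p(t,x)$ via its Fourier transform in $x$ and to read off all of (i)--(iii) from properties of the Mittag-Leffler function. Applying the spatial Fourier transform to \eqref{3.1} reduces the PDE to the scalar fractional ODE $\partial^\alpha_t \hat{u}(t,\xi)=-|\xi|^2\hat{u}(t,\xi)$ with the appropriate vanishing initial data, whose solution is $\hat{u}(t,\xi)=E_\alpha(-|\xi|^2 t^\alpha)\,\hat{u}_0(\xi)$, where $E_\alpha(z)=\sum_{k\ge0}z^k/\Gamma(\alpha k+1)$. This identifies $\hat{p}(t,\xi)=E_\alpha(-|\xi|^2 t^\alpha)$, and termwise application of $I_t^{\alpha-1}$ or $D_t^{1-\alpha}$ to the defining series yields $\hat{q}(t,\xi)=t^{\alpha-1}E_{\alpha,\alpha}(-|\xi|^2 t^\alpha)$ uniformly for $\alpha\in(0,2)$.

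For part (i), the identity $\partial_t^\alpha p=\Delta p$ is immediate from $\partial_t^\alpha\hat{p}=-|\xi|^2\hat{p}$. For the second relation, differentiating the series gives $\frac{d}{dt}\hat{p}(t,\xi)=-|\xi|^2 t^{\alpha-1}E_{\alpha,\alpha}(-|\xi|^2 t^\alpha)=-|\xi|^2\hat{q}(t,\xi)$, which on inverting the Fourier transform is exactly $\partial_t p=\Delta q$. The statements $\frac{\partial}{\partial t}p(t,x)\to 0$ as $t\downarrow 0$ for $x\neq0$ and the uniform integrability of $\frac{\partial}{\partial t}p(t,\cdot)$ on $[\varepsilon,T]$ fall out of the pointwise estimates in part (ii). Part (iii) also drops out of the Fourier representation: the homogeneity $\hat{q}(t,\xi)=t^{\alpha-1}\hat{q}(1,t^{\alpha/2}\xi)$ inverts to the claimed identity $q(t,x)=t^{-\alpha d/2+\alpha-1}q(1,xt^{-\alpha/2})$.

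The substance of the lemma is part (ii). By the scaling relation \eqref{scaling_of_q} it suffices to prove pointwise bounds for $q(1,x)$ and its derivatives in the two regimes $|x|\le 1$ (i.e.\ $R\le1$) and $|x|\ge 1$ (i.e.\ $R\ge1$); the general $t$-dependence is then recovered by substitution. For $|x|$ small I would insert the power series of $E_{\alpha,\alpha}$ into the inverse Fourier integral, perform termwise differentiation, and estimate each term separately: the leading contribution produces the singular factor $|x|^{-d-m+2}$ together with the logarithmic correction in the critical dimension $d=2$ (for $m\ge1$) or $d=4$ (for $m=0$), while the remaining low-order terms contribute only in the $m=0$ case in low dimensions and account for the extra terms in \eqref{bounds_of_q_1}. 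For $|x|$ large I would use the subordination representation, writing $q(1,x)$ as the integral of a Gaussian kernel against an appropriately scaled one-sided $\alpha/2$-stable density, and apply Laplace's method to the resulting oscillatory integral; the saddle point is located at the scale where $t^{-\alpha/(2-\alpha)}|x|^{2/(2-\alpha)}$ is extremized, producing the exponential decay in \eqref{bounds_of_q_2}. The combined bounds \eqref{bounds_of_q_1}--\eqref{bounds_of_q_2} at $t=1$ give \eqref{bounds_of_q_3} by taking the minimum of the $|x|\le 1$ and $|x|\ge 1$ estimates.

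The main obstacle will be the saddle-point analysis yielding the sharp exponential constant and the correct algebraic prefactor in \eqref{bounds_of_q_2} uniformly in $m$ and $n$, since naive integration-by-parts on the Fourier integral gives only polynomial decay. A careful contour deformation (or equivalently a Mellin-Barnes representation for $E_{\alpha,\alpha}$) is needed to see the stretched exponential with exponent $2/(2-\alpha)$. An alternative route is to import the derivative estimates from the Eidelman-Kochubei theory of fundamental solutions to fractional parabolic equations, which then reduces the task to verifying that $q$ satisfies their hypotheses and that the stated powers of $t$ match after applying $I_t^{\alpha-1}$ or $D_t^{1-\alpha}$ to $p$.
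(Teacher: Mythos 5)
The paper does not prove this lemma at all; it simply cites \cite[Lemma 3.2]{kim17timefractionalpde} for (i)--(ii) and equality (5.2) of \cite{kim2015asymptotic} for (iii). Your Mittag-Leffler/Fourier route is the same machinery those references use, so in spirit you are reconstructing the cited proofs rather than finding a different one. Parts (i) and (iii) of your sketch are correct: $\hat p(t,\xi)=E_\alpha(-|\xi|^2t^\alpha)$, $\hat q(t,\xi)=t^{\alpha-1}E_{\alpha,\alpha}(-|\xi|^2t^\alpha)$, and the scaling and the two PDE identities all follow cleanly on the Fourier side.

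There is, however, a genuine error in your sketch of part (ii), which is the substance of the lemma. You propose, for the small-$|x|$ regime, to ``insert the power series of $E_{\alpha,\alpha}$ into the inverse Fourier integral and estimate each term.'' The Taylor series of $E_{\alpha,\alpha}$ at $0$ governs the behavior of $\hat q(1,\xi)$ near $\xi=0$, hence the \emph{large}-$|x|$ tail of $q$, not the local singularity; termwise inverse Fourier transforms of the monomials $|\xi|^{2k}$ are just derivatives of the delta at the origin and give nothing usable. What controls the singularity of $q(1,x)$ at $x=0$ is the \emph{large}-argument asymptotic expansion $E_{\alpha,\alpha}(-z)\sim-\sum_{k\ge1}(-z)^{-k}/\Gamma(\alpha-\alpha k)$. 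Crucially the $k=1$ term vanishes because $1/\Gamma(0)=0$, so $\hat q(1,\xi)\sim C|\xi|^{-4}$ as $|\xi|\to\infty$, and the leading singularity of $D_x^m q(1,x)$ near $x=0$ is $|x|^{-d-m+4}$ (with a logarithm in the critical dimensions), matching the $R^2$ factor in \eqref{bounds_of_q_1}. Your stated exponent $|x|^{-d-m+2}$ is the one you would get for $p$ (where $\hat p=E_\alpha$, $\beta=1$, and the $1/z$ term survives), not for $q$; carrying that through would produce a weaker bound inconsistent with \eqref{bounds_of_q_1}. The large-$|x|$ part of your plan (subordination or Mellin--Barnes and a saddle-point estimate to get the stretched exponential with exponent $2/(2-\alpha)$) is the right idea, but you acknowledge yourself that it is not carried out, and that plus the corrected small-$|x|$ analysis is precisely where all the work of the cited references lies.
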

\begin{proof}
See e.g.  \cite[Lemma 3.2]{kim17timefractionalpde} for (i)-(ii), and  see  equality (5.2) of \cite{kim2015asymptotic} for (iii).
\end{proof}

\begin{lemma}\label{representation_whole_space}
\begin{enumerate}[(i)]
\item \label{representation} Let $u\in\Ccinf((0,\infty)\times\R^{d})$, and let $f:=\partial^{\alpha}_{t}u-\Delta u$. Then
\begin{equation*}
u(t,x)=\int_{0}^{t}\int_{\R^{d}}q(t-s,x-y)f(s,y)dyds.
\end{equation*}
\item \label{representation2} Let $p>1$, $T<\infty$ and $f\in C_c^\infty((0,\infty)\times\R^{d})$. Define 
\begin{equation} \label{sol_def}
u(t,x) = \int_0^t\int_{\mathbb{R}^d} q(t-s,x-y)f(s,y) dyds,
\end{equation}
where $(t,x)\in(0,T)\times\bR^d$. Then $u$ is the unique solution to the equation  
\begin{equation*}
\partial^\alpha_t u = \Delta u+f, \quad 0<t<T,\, x\in \bR^d\,; \quad u(0)=1_{\alpha>1} \partial_t u(0)=0
\end{equation*}
in the class $\bH^{\alpha,2}_{p,p,0}(T)$. Moreover,
\begin{equation}\label{L_2 estimate}
\|D^{2}u\|_{\bL_{p}(T)}\leq N \|f\|_{\bL_{p}(T)},
\end{equation}
where $N$ depends only on $\alpha,d$, and $p$. In particular, we have
\begin{equation*}
\int_0^\infty \| D^2u(t,\cdot) \|_{L_p}^p dt \leq N \int_0^\infty \| f(t,\cdot) \|_{L_p}^p dt.
\end{equation*} 
\end{enumerate}
\end{lemma}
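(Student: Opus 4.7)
The plan is to prove (i) by a Duhamel-type calculation in the spatial Fourier variable, and then to deduce (ii) by combining (i) with the density statement of Lemma \ref{lem 05.08.1}(ii) and an unweighted $L_p$-multiplier estimate that is already available in the literature.

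For (i), fix $u\in \Ccinf((0,\infty)\times\R^{d})$ and set $f=\partial_{t}^{\alpha}u-\Delta u$. Applying the spatial Fourier transform reduces the equation, for each $\xi$, to the scalar fractional ODE
\begin{equation*}
\partial_{t}^{\alpha}\hat{u}(t,\xi)+|\xi|^{2}\hat{u}(t,\xi)=\hat{f}(t,\xi), \qquad \hat{u}(0,\xi)=1_{\alpha>1}\partial_{t}\hat{u}(0,\xi)=0 .
\end{equation*}
Since $\hat{p}(t,\xi)$ solves the homogeneous version of this ODE with $\hat{p}(0,\xi)=1$, and $q$ is the appropriate fractional integral (resp.\ derivative) of $p$, the function $\hat{q}(t,\xi)$ is the fractional resolvent kernel associated with the symbol $-|\xi|^{2}$. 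Variation of parameters then gives $\hat{u}(t,\xi)=\int_{0}^{t}\hat{q}(t-s,\xi)\hat{f}(s,\xi)ds$, and inverting the Fourier transform yields the claimed representation. Alternatively, one may define $v(t,x):=\int_{0}^{t}\!\int q(t-s,x-y)f(s,y)dyds$, use the pointwise bounds \eqref{bounds_of_q_2}--\eqref{bounds_of_q_3} and Lemma \ref{prop_of_p_q}(i) to justify differentiating under the integral sign, check directly that $v$ satisfies the same equation with the same initial conditions as $u$, and invoke ODE uniqueness (in the Fourier variable) to conclude $v=u$.

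For (ii), let $u$ be defined by \eqref{sol_def}. The decay and smoothness recorded in Lemma \ref{prop_of_p_q}, combined with the compact support of $f$, make all the integrals defining $u$, $Du$, $D^{2}u$, and $\partial_{t}^{\alpha}u$ absolutely convergent, so $u\in C^{\infty}((0,\infty)\times\R^{d})$ and a direct differentiation-under-the-integral computation shows that $u$ solves the equation with vanishing (Caputo-sense) initial data. For uniqueness, suppose $\tilde{u}\in\bH^{\alpha,2}_{p,p,0}(T)$ is another solution; then $w=u-\tilde{u}\in\bH^{\alpha,2}_{p,p,0}(T)$ satisfies $\partial_{t}^{\alpha}w-\Delta w=0$. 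By Lemma \ref{lem 05.08.1}(ii) pick $w_{n}\in\Ccinf((0,\infty)\times\R^{d})$ with $w_{n}\to w$ in $\bH^{\alpha,2}_{p,p,0}(T)$; applying (i) to each $w_{n}$ and passing to the limit (the right-hand side tends to $0$ in $\bL_{p}(T)$) forces $w\equiv 0$.

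The genuine analytic content is the estimate \eqref{L_2 estimate}. The map $f\mapsto D^{2}u$ is a convolution operator in $(t,x)$ with kernel $1_{t>0}D^{2}q(t,x)$, whose Fourier symbol in space is $-\xi_{i}\xi_{j}\hat{q}(t,\xi)$. Its $L_{p}(L_{p})$-boundedness is precisely the Fourier-multiplier theorem established, using Calder\'on--Zygmund theory for Banach-space--valued kernels, in \cite{kim17timefractionalpde}; I would simply quote this result, noting that extending the time integration from $(0,T)$ to $(0,\infty)$ is automatic because $f$ has compact support in $(0,T)$ and formula \eqref{sol_def} makes sense for all $t>0$. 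The hardest step is therefore \eqref{L_2 estimate} itself; but since this unweighted bound is already available, the role of this lemma is to serve as the launching pad for the weighted, sharp-function--based analysis developed in the remainder of Section 3.
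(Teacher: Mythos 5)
Your overall route for parts (i) and (ii) agrees in spirit with the paper, which simply cites Lemma 3.5 and Theorem 2.10 of \cite{kim17timefractionalpde} for both the representation formula and the $L_p$-estimate; your Fourier/Duhamel derivation of (i) and the density-plus-(i) uniqueness argument in (ii) are acceptable reconstructions of what that reference provides.

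However, there is a genuine gap in how you handle the constant $N$ in \eqref{L_2 estimate}. The lemma asserts that $N$ depends only on $\alpha,d,p$ --- in particular it is independent of $T$ --- and it is precisely this $T$-uniformity that makes the ``in particular'' conclusion
\[
\int_0^\infty \|D^2 u(t,\cdot)\|_{L_p}^p\,dt \leq N \int_0^\infty \|f(t,\cdot)\|_{L_p}^p\,dt
\]
legitimate. Your justification --- that ``extending the time integration from $(0,T)$ to $(0,\infty)$ is automatic because $f$ has compact support in $(0,T)$'' --- does not close this. If the estimate quoted from \cite{kim17timefractionalpde} carries a constant $N(T)$ that grows in $T$ (as it does in that reference, where the bound is established on a fixed finite time interval), then even with $f$ supported in a fixed $(0,T_0)$ the bound over $(0,T)$ for $T>T_0$ degenerates as $T\to\infty$, and no $(0,\infty)$ estimate follows. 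Note also that one cannot salvage this via Young's inequality, since $\int_0^\infty\int_{\R^d}|D^2q(t,x)|\,dx\,dt$ diverges (by \eqref{scaling_of_q}, $\int_{\R^d}|D^2q(t,x)|\,dx \sim t^{-1}$). The paper's remedy is a scaling argument: starting from \eqref{L_2 estimate} for $T=1$, apply it to $\bar u(t,x):=u(Tt,T^{\alpha/2}x)$, using the Caputo scaling identity \eqref{eqn 4.1.7}, to transfer the estimate to arbitrary $T$ with the same constant. This step should be made explicit in your proof; without it, the $T$-uniform bound --- and hence the crucial $(0,\infty)$ conclusion that feeds into Lemmas \ref{2_2_bound_1}--\ref{2_2_bound_3} --- is unproved.
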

\begin{proof}
\eqref{representation} and \eqref{representation2} are consequences of  Lemma 3.5 and Theorem 2.10 of \cite{kim17timefractionalpde}. We only remark that  the independency of $N_0$ on $T$ can be easily checked based on the estimate \eqref{L_2 estimate} for $T=1$ and considering  the equation for $\bar{u}(t,x):=u(Tt,T^{\alpha/2}x)$. For the equation of $\bar{u}$, use   the relation 
\begin{equation}
\label{eqn 4.1.7}
\partial^{\alpha}_t (h(ct))=c^{\alpha}(\partial^{\alpha}_t h)(ct).
\end{equation}
\end{proof}


\begin{lemma}\label{useful_lemma}
\begin{enumerate}[(i)]
\item \label{int_by_part}  Let $r<0$ and $f\in C_c^\infty(\bR^d\setminus \{0\})$. Then we have
\begin{equation}\label{eqn 5.03.1}
\int_{\R^{d}}|z|^{r}f(z)dz=-r\int_{0}^{\infty}\rho^{r-1}\int_{|z|<\rho}f(z)dzd\rho.
\end{equation}
\item \label{bound_of_q_4} Let $\gamma=(\gamma_{1},\dots,\gamma_{d})$ be a multi-index satisfying $|\gamma|\leq 2$. Then for $\ep\in[0,1]$,
\begin{equation} \label{useful_bound}
|D^\gamma q(t,x)|\leq N(\alpha,d,\ep)|t|^{(1-|\gamma|/2+\ep/2)\alpha -1}|x|^{-d-\ep}.
\end{equation}
Furthermore, if $|\gamma|\leq 1$ then \eqref{useful_bound} is also vaild for $\ep\in[-1,0)$.
\end{enumerate}
\end{lemma}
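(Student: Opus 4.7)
For part (i), the plan is to write $|z|^r$ as an integral and swap order. Since $r<0$,
\[
|z|^r = -r\int_{|z|}^\infty \rho^{r-1}\,d\rho = -r\int_0^\infty \rho^{r-1} 1_{|z|<\rho}\,d\rho.
\]
Substituting this into the left-hand side of \eqref{eqn 5.03.1} and applying Fubini---justified because the compact support of $f$ disjoint from $0$ makes $|z|^r f(z)$ bounded with compact support---gives the claim immediately.

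For part (ii), the strategy is to reduce to $t=1$ via the scaling identity \eqref{scaling_of_q} and then to derive a pointwise bound on $|D^\gamma q(1,y)|$ from Lemma \ref{prop_of_p_q}. Differentiating \eqref{scaling_of_q} in $x$ yields
\[
D_x^\gamma q(t,x) = t^{-\alpha d/2 + \alpha - 1 - \alpha|\gamma|/2}(D^\gamma q)(1, x t^{-\alpha/2}),
\]
and with $y := x t^{-\alpha/2}$ the identity $|x|^{-d-\ep} = t^{\alpha(d+\ep)/2} |y|^{-d-\ep}$ shows that the target bound \eqref{useful_bound} is equivalent to
\[
|D^\gamma q(1,y)| \leq N|y|^{-d-\ep} \qquad (y\neq 0).
\]

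I would then split into two regions. On $|y|\geq 1$, the Gaussian-type decay \eqref{bounds_of_q_2} at $t=1$ gives $|D^\gamma q(1,y)|\leq N\exp\{-\sigma|y|^{2/(2-\alpha)}\}$, which dominates any polynomial $|y|^{-d-\ep}$. On $|y|\leq 1$ with $|\gamma|\geq 1$, the bound \eqref{bounds_of_q_3} gives $|D^{|\gamma|}q(1,y)|\leq N|y|^{-d+2-|\gamma|}$, which is $\leq N|y|^{-d-\ep}$ precisely when $\ep \geq |\gamma|-2$---a range containing $[0,1]$ for $|\gamma|=2$ and $[-1,1]$ for $|\gamma|=1$. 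The remaining case $|\gamma|=0$, $|y|\leq 1$, $\ep\in[-1,0)$ is the main technical point, since \eqref{bounds_of_q_3} does not directly apply; here I would invoke \eqref{bounds_of_q_1} with $m=n=0$, $t=1$ (so $R=|y|^2$) and verify dimension-by-dimension that $|q(1,y)|\leq N|y|^{-d+\delta}$ for some $\delta\geq 1$, absorbing the logarithmic factors in $d=2,4$ via $|\log|y||\leq N_\tau|y|^{-\tau}$ for arbitrarily small $\tau>0$. Since $|\ep|\leq 1$ is strictly dominated by the leading polynomial exponent $\delta$ in each dimension, the desired inequality follows.
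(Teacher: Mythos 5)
Your proposal is correct and follows essentially the same strategy as the paper: for (ii), reduce to $t=1$ by scaling and then bound $|D^\gamma q(1,y)|$ separately on $|y|\geq 1$ (exponential decay from \eqref{bounds_of_q_2}) and $|y|\leq 1$ (polynomial bounds from \eqref{bounds_of_q_1}/\eqref{bounds_of_q_3}, with logarithms absorbed). For (i) the paper proceeds via polar coordinates and integration by parts, writing $\int F\,G(|z|)\,dz = \int G(\rho)\,\tfrac{d}{d\rho}\bigl(\int_{B_\rho}F\bigr)d\rho$ and integrating by parts in $\rho$, whereas you write $|z|^r = -r\int_{|z|}^\infty\rho^{r-1}d\rho$ and apply Fubini; these are two presentations of the same computation and both are fine. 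One small bookkeeping remark for (ii): after disposing of $|y|\geq 1$ and of $|y|\leq 1$ with $|\gamma|\geq 1$, the remaining case is $|\gamma|=0$, $|y|\leq 1$ with $\ep\in[-1,1]$, not only $\ep\in[-1,0)$; your dimension-by-dimension bound $|q(1,y)|\leq N|y|^{-d+\delta}$ with $\delta\geq 1$ in fact covers the full range, so this is merely a matter of wording and does not affect the argument.
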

\begin{proof}
\eqref{int_by_part} Let $F(z)$ and $G(z) = G(|z|)$ be smooth functions on $\bR^d\setminus \{0\}$. For $0<\varepsilon<R<\infty$, we have
\begin{equation*}
\begin{aligned}
\int_{\varepsilon<|z|<R}F(z)G(|z|)dz&=\int_{\varepsilon}^{R}\int_{\partial B_{\rho}(0)}G(\rho)F(\sigma)d\sigma_{\rho} d\rho
\\
&=\int_{\varepsilon}^{R}G(\rho)\left(\int_{\partial B_{\rho}(0)}F(\sigma)d\sigma_{\rho}\right) d\rho
\\
&=\int_{\varepsilon}^{R}G(\rho)\frac{d}{d\rho}\left(\int_{B_{\rho}(0)}F(z)dz\right) d\rho.
\end{aligned}
\end{equation*}
By applying the integration by parts to the last term, we have
\begin{equation} \label{int_by_part1}
\begin{aligned}
\int_{\varepsilon<|z|<R}F(z)G(|z|)dz&=G(R)\int_{|z|<R}F(z)dz-G(\varepsilon)\int_{|z|<\varepsilon}F(z)dz\\
&\quad -\int_{\varepsilon}^{R}G'(\rho)\int_{|z|<\rho}F(z)dzd\rho.
\end{aligned}
\end{equation}
Applying \eqref{int_by_part1} to $G(z)=|z|^{r}$ and $F(z)=f(z)$,  we have
\begin{equation*}
\begin{aligned}
\int_{\varepsilon<|z|<R}|z|^{r}f(z)dz&=R^{r}\int_{|z|<R}f(z)dz-\varepsilon^{r}\int_{|z|<\varepsilon}f(z)dz\\
&\quad -r\int_{\varepsilon}^{R}\rho^{r-1}\int_{|z|<\rho}f(z)dzd\rho.
\end{aligned}
\end{equation*}
By letting $\varepsilon \downarrow 0$, and $R\to \infty$, we have \eqref{eqn 5.03.1}. The condition that $f$ vanishes near $x=0$ is used when $\varepsilon \downarrow 0$, and the condition $r<0$ is used to have $R^{r}\to 0$ as $R\to \infty$.

\eqref{bound_of_q_4} Let $b,c>0$ and $\ep\in[-1,1]$. Observe that
\begin{equation} \label{exp_bound}
\begin{gathered}
e^{-br^{c}}\leq N(b,c,d, \ep)r^{-d-\ep},\quad\forall r>0,
\\
r^{b}|\log{r}|\leq N(b), \quad \forall r\leq 1.
\end{gathered}
\end{equation} 
Now let $|\gamma|\leq 2$ and $\ep\in[0,1]$. Then, by \eqref{scaling_of_q}, \eqref{bounds_of_q_2}, \eqref{bounds_of_q_1}, and \eqref{exp_bound}, we have
\begin{equation*}
\begin{aligned}
&|D^\gamma q(t,x)| \\
&= |t|^{(-\frac{d}{2}+1-\frac{|\gamma|}{2})\alpha-1}|(D^\gamma q)(1,t^{-\alpha/2}x)| \\
&\leq N|t|^{(-\frac{d}{2}+1-\frac{|\gamma|}{2})\alpha-1}(1_{t^{-\alpha/2}|x|\leq1}(t^{-\alpha/2}|x|)^{-d} + 1_{t^{-\alpha/2}|x|\geq1}(t^{-\alpha/2}|x|)^{-d-\ep}) \\
&\leq N|t|^{(-\frac{d}{2}+1-\frac{|\gamma|}{2})\alpha-1}(1_{t^{-\alpha/2}|x|\leq1}(t^{-\alpha/2}|x|)^{-d-\ep} + 1_{t^{-\alpha/2}|x|\geq1}t^{\alpha(d+\ep)/2 }|x|^{-d-\ep}) \\
&\leq N|t|^{(1-\frac{|\gamma|}{2}+\frac{\ep}{2})\alpha-1}|x|^{-d-\ep}. \\
\end{aligned}
\end{equation*}
To show the second assertion, let $|\gamma|\leq1$ and $\ep\in[-1,0)$. Again, by \eqref{scaling_of_q}, \eqref{bounds_of_q_2}, \eqref{bounds_of_q_1}, and \eqref{exp_bound}, we have
\begin{equation*}
\begin{aligned}
&|D^\gamma q(t,x)| \\
&= |t|^{(-\frac{d}{2}+1-\frac{|\gamma|}{2})\alpha-1}|(D^\gamma q)(1,t^{-\alpha/2}x)| \\
&\leq N|t|^{(-\frac{d}{2}+1-\frac{|\gamma|}{2})\alpha-1}(1_{t^{-\alpha/2}|x|\leq1}(t^{-\alpha/2}|x|)^{-d-\ep} + 1_{t^{-\alpha/2}|x|\geq1}(t^{-\alpha/2}|x|)^{-d-\ep}) \\
&\leq N|t|^{(1-\frac{|\gamma|}{2}+\frac{\ep}{2})\alpha-1}|x|^{-d-\ep}. \\
\end{aligned}
\end{equation*}
The lemma is proved.
\end{proof}

For a real-valued measurable function $h$ on $\mathbb{R}^{d+1}$, define the maximal function 
$$ \mathbb{M}h(t,x) :=\sup_{E}\frac{1}{|E|}\int_{E}|h(s,y)|dsdy,
$$
where the supremum is taken over the cubes $E$  of the form
\begin{equation*}
E=[r,s]\times[a^{1},b^{1}]\times\dots\times[a^{d},b^{d}]
\end{equation*}
 containing $(t,x)$.

For $f\in C_{c}^\infty(\R^{d+1})$, $T\in (0,\infty]$,  $i,j=1,2,\dots,d$ and $(t,x)\in \bR^{d+1}$,  we define
$$
L_{0}f(t,x)=L^T_{0}f(t,x)=\int_{-\infty}^t\int_{\R^{d}}1_{0<t-s<T}q(t-s,x-y)f(s,y)dyds, 
$$
$$
L^{i}_{1}f(t,x)=L^{T,i}_{1}f(t,x)=\int_{-\infty}^t\int_{\R^{d}}1_{0<t-s<T}D_{i}q(t-s,x-y)f(s,y)dyds,
$$
$$
L^{ij}_{2}f(t,x)=\int_{-\infty}^t\int_{\R^{d}}D_{ij}q(t-s,x-y)f(s,y)dyds,
$$
and denote
$$
L_{1}f=L^T_1f=(L^{T,1}_1f,\cdots, L^{T,d}_1 f), \quad L_2f=(L^{ij}_2f)_{i,j=1,2,\cdots, d}.
$$
Observe that
 for any $(t_{0},x_{0})\in\R^{d+1}$,
\begin{equation}\label{eqn 05.16.1}
\begin{aligned}
L_{k}f(t+t_{0},x+x_{0})&=L_{k}(f(t_{0}+\cdot,x_{0}+\cdot))(t,x),\quad (k=0,1,2),
\end{aligned}
\end{equation}  
and 
for any $c>0$,
\begin{equation}\label{scaling_qxx}
\begin{aligned}
L_{2}(f(c^{\frac{2}{\alpha}}\cdot,c\cdot))(t,x)&=L_{2}f(c^{\frac{2}{\alpha}}t,cx).
\end{aligned}
\end{equation}

For $w_{1}\in A_{p}(\bR^d)$ and $w_{2}\in A_{q}(\bR)$, and $T\in (0,\infty]$, define 
\begin{equation*}
\tilde{\bL}(q,p,w_{2},w_{1},T) := L_q\big((-\infty,T), w_{2}dt; L_p(w_{1})),
\end{equation*}
where
$$
\|f\|_{\tilde{\bL}(q,p,w_{2},w_{1},T)}:=\left(\int_{-\infty}^{T}\left(\int_{\R^{d}}|f(t,x)|^{p}w_{1}(x)dx\right)^{q/p}w_{2}(t)dt\right)^{1/q}.
$$
As usual, we omit $T$ if $T = \infty.$ 

Here is  the main result of this section.
\begin{theorem}\label{thm 05.06.3}
Let $p,q\in(1,\infty)$, $T\in(0,\infty)$, $w_{1}\in A_{p}(\R^{d})$ and $w_{2}\in A_{q}(\R)$. Then for any  $f\in C^{\infty}_c(\bR^{d+1})$ we have
\begin{equation} \label{Lp_estimate}
\|L_{k}f\|_{\tilde{\bL}(q,p,w_{2},w_{1},T)}\leq N_k \|f\|_{\tilde{\bL}(q,p,w_{2},w_{1},T)}, \quad k=0,1,2
\end{equation}
where $N_{k}=N_k(\alpha,d,p,q,[w_{1}]_{p},[w_{2}]_{q},T)$ $(k=0,1)$, and $N_2=N_2$$($$\alpha,d,p,q$,
$[w_{1}]_{p}$,$[w_{2}]_{q}$$)$. In particular, $N_2$ is independent of $T$.
\end{theorem}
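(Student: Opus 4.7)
The plan is to treat the main case $k=2$ first, as this is the one yielding a constant independent of $T$. The cases $k=0,1$ will be handled by a separate, more direct convolution argument in which $T$-dependence of the constant is unavoidable.

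\smallskip
\noindent
\emph{Case $k=2$.} The goal is to establish a pointwise sharp-function estimate of the form
\begin{equation*}
(L_{2}f)^{\#}(t_{0},x_{0}) \;\leq\; N\, \bigl( \mathbb{M}(|f|^{s})(t_{0},x_{0}) \bigr)^{1/s}
\end{equation*}
for some $s\in(1,\min(p,q))$, and then to invoke the Fefferman--Stein inequality together with the boundedness of the strong maximal function $\mathbb{M}$ on the weighted mixed-norm space $\tilde{\bL}(q,p,w_{2},w_{1})$. By the self-improving property of Muckenhoupt weights (Remark~\ref{rmk 09.27.21:52}), $s$ can be chosen close enough to $1$ that $w_{1}\in A_{p/s}$ and $w_{2}\in A_{q/s}$, which is exactly what is needed for both tools to apply to $|L_{2}f|^{s}$.

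To obtain the sharp-function bound, fix a point $(t_{0},x_{0})$ and a parabolic cylinder $Q_{r}(t_{0},x_{0}) := (t_{0}-r^{2/\alpha},\, t_{0}+r^{2/\alpha})\times B_{r}(x_{0})$, adapted to the scaling $|x|^{2}\sim t^{\alpha}$. Split $f=f_{1}+f_{2}$ with $f_{1}=f\cdot 1_{Q^{*}}$ for a fixed enlargement $Q^{*}$ of $Q_{r}$. For $L_{2}f_{1}$, the unweighted $L_{s}$-bound from Lemma~\ref{representation_whole_space}\eqref{representation2} gives control by $\|f\|_{L_{s}(Q^{*})}$, and the $T$-independence of its constant follows from the rescaling identity \eqref{eqn 4.1.7}. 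For $L_{2}f_{2}$, I would exploit the pointwise decay of $D_{ij}q$ furnished by Lemma~\ref{useful_lemma}\eqref{bound_of_q_4} with $\epsilon>0$, together with the translation and dilation symmetries \eqref{eqn 05.16.1}--\eqref{scaling_qxx} (which reduce the setting to $r=1$ and $(t_{0},x_{0})=(0,0)$), to show that $L_{2}f_{2}$ has small oscillation on $Q_{r}$ controlled by $(\mathbb{M}|f|^{s})^{1/s}(t_{0},x_{0})$.

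\smallskip
\noindent
\emph{Cases $k=0,1$.} These admit a direct proof. Using Lemma~\ref{useful_lemma}\eqref{bound_of_q_4} with $\epsilon\in[-1,0)$, the kernels $q(t,\cdot)$ and $D_{i}q(t,\cdot)$ are dominated by $t^{\beta-1}|x|^{-d-\epsilon}$ where $\beta=(1-|\gamma|/2+\epsilon/2)\alpha>0$ (taking $\epsilon$ close enough to $0$). Convolution in $x$ against $|x|^{-d-\epsilon}$ is pointwise controlled by the Hardy--Littlewood maximal function, which is bounded on $L_{p}(w_{1})$ since $w_{1}\in A_{p}$. Combining this with Minkowski's integral inequality in time and the identity $\int_{0}^{T}t^{\beta-1}\,dt<\infty$ (responsible for the $T$-dependence) yields the desired bound.

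\smallskip
\noindent
\emph{Main obstacle.} The technical crux is the oscillation estimate for $L_{2}f_{2}$: given $(t,x),(t',x')\in Q_{r}(t_{0},x_{0})$, one must show
\begin{equation*}
|L_{2}f_{2}(t,x)-L_{2}f_{2}(t',x')| \;\leq\; N\, \bigl(\mathbb{M}|f|^{s}\bigr)^{1/s}(t_{0},x_{0}),
\end{equation*}
which requires parabolic H\"older-type bounds on the second spatial derivative kernel $D_{ij}q(t-s,x-y)$ for $(s,y)$ outside $Q^{*}$. The integration over the complement of $Q^{*}$ must be organized according to whether $|x-y|$ is large or $t-s$ is large, invoking \eqref{useful_bound} with a suitable $\epsilon>0$ in each regime. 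This step, rather than the routine Fefferman--Stein/Hardy--Littlewood application that follows, is the technical heart of the argument.
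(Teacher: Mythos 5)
Your treatment of $k=2$ follows the same route as the paper: a pointwise sharp-function bound $(L_2f)^{\#}\le N(\mathbb{M}|f|^{p_0})^{1/p_0}$ proved by decomposing $f$ around a parabolic cylinder, followed by the weighted Fefferman--Stein and Hardy--Littlewood theorems adapted to the metric $d_\alpha$, with a self-improving choice of exponent $p_0$ so that $w_1\in A_{p/p_0}$, $w_2\in A_{q/p_0}$. The oscillation estimate for $L_2f_2$ that you flag as the crux is exactly the content of the paper's Lemmas~\ref{2_2_bound_2}, \ref{lem 05.17.1}, and \ref{2_2_bound_3}; note that the paper uses an effectively three-piece decomposition ($f\zeta$, $f(1-\zeta)\eta$, $f(1-\zeta)(1-\eta)$) rather than your two-piece $f\cdot 1_{Q^*}+f\cdot 1_{(Q^*)^c}$, because the piece which is far in time but close in space still sees the spatial singularity of $D_{ij}q$ and needs a separate argument (Lemma~\ref{lem 05.17.1}). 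Your sketch is consistent with this but would need to supply those bounds.

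For $k=0,1$ you diverge from the paper, which applies the same sharp-function machinery uniformly to all three operators. Your proposed shortcut has a genuine gap: you bound $q$ and $D_iq$ by $t^{\beta-1}|x|^{-d-\epsilon}$ with $\epsilon\in[-1,0)$ via Lemma~\ref{useful_lemma}\eqref{bound_of_q_4} and then assert that convolution against $|x|^{-d-\epsilon}$ is pointwise controlled by the Hardy--Littlewood maximal function. That fails: with $\epsilon<0$ the radial profile $|x|^{-d+|\epsilon|}$ is \emph{not} integrable at infinity, so it is not dominated by an integrable radially decreasing function, and the pointwise maximal-function control does not hold (take $f=1_{B_R}$ with $R$ large to see the convolution at the origin grows like $R^{|\epsilon|}$ while the maximal function stays bounded). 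The bound \eqref{useful_bound} with $\epsilon<0$ is used in the paper only on bounded regions (e.g., $|z|\le 2\delta d$ in Lemma~\ref{lem 05.17.1}) where it \emph{is} integrable, never globally. If you want a direct convolution argument for $k=0,1$, you must use the sharper two-regime bounds \eqref{bounds_of_q_1}--\eqref{bounds_of_q_2} (or equivalently $\int_{\R^d}|q(1,z)|\,dz<\infty$ from \eqref{during_proof_11}), which give $q(1,\cdot)$ and $D q(1,\cdot)$ genuinely integrable radial majorants, so that $|\int q(t-s,x-y)f(s,y)\,dy|\le N(t-s)^{\alpha-1}\bM_x f(s,\cdot)(x)$, followed by domination of the time convolution with $r^{\alpha-1}1_{(0,T)}$ by $\bM_t$. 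That would salvage the approach, but as written your step is incorrect. In any case the paper avoids the issue entirely by running the identical sharp-function argument for all $k$, with the $T$-dependence for $k=0,1$ entering only through the indicator $1_{0<t-s<T}$ in the definitions of $L_0^T,L_1^T$.
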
 

\begin{remark}
 \label{remark 10.24.1}
Since $C^{\infty}_c(\bR^{d+1})$ is dense in $\tilde{\bL}(q,p,w_{2},w_{1},T)$, by Theorem \ref{thm 05.06.3},  the operators $L_k$ can be continuously extended to $\tilde{\bL}(q,p,w_{2},w_{1},T)$.
\end{remark}

For $\delta>0$, define 
\begin{equation*}
Q_{\delta}:=[-\delta^{\frac{2}{\alpha}},0]\times[-\delta/2,\delta/2]^{d}.
\end{equation*}

\begin{lemma} \label{2_2_bound_1}
Let  $p_0\in(1,\infty)$, $T<\infty$ and $f\in C_c^\infty(\bR^{d+1})$. Assume that $f=0$ outside of $[-(2\delta)^{2/\alpha}, (2\delta)^{2/\alpha}]\times B_{3\delta d/2}$. Then for  $(t,x)\in Q_{\delta}$,
\begin{equation} \label{bound_1_ineq}
\begin{gathered}
\aint_{Q_{\delta}}| L_{k}f(s,y) |^{p_{0}}dyds \leq N_{k}(\alpha,d,p_0,T) \mathbb{M}|f|^{p_{0}}(t,x),\quad k = 0,1, \\
\aint_{Q_{\delta}}| L_{2}f(s,y) |^{p_{0}}dyds \leq N_{2}(\alpha,d,p_0) \mathbb{M}|f|^{p_{0}}(t,x).
\end{gathered}
\end{equation}
\end{lemma}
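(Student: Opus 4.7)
The plan is to first establish $L_{p_0}(\bR^{d+1})$-boundedness of each operator $L_k$ on the whole space (with constants reflecting the dependence on $T$ stated in the theorem), and then use the compact support of $f$ to pass from the global bound to a mean over $Q_\delta$ controlled by $\mathbb{M}|f|^{p_0}(t,x)$.

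For the global $L_{p_0}$-boundedness: when $k=2$, the kernel $D_{ij}q(t-s,x-y)$ is translation invariant in $(t,x)$, so by translating $f$ I may assume $\mathrm{supp}\,f\subset(0,\infty)\times\bR^d$, and then the estimate \eqref{L_2 estimate} of Lemma \ref{representation_whole_space}(ii) yields
\begin{equation*}
\|L_2 f\|_{L_{p_0}(\bR^{d+1})}\leq N(\alpha,d,p_0)\|f\|_{L_{p_0}(\bR^{d+1})},
\end{equation*}
with $N$ independent of $T$ (this is the whole reason for proving \eqref{L_2 estimate} in a $T$-independent form). For $k=0,1$, I use the scaling identity \eqref{scaling_of_q} together with the integrability of $q(1,\cdot)$ and $D_i q(1,\cdot)$ on $\bR^d$ (the latter coming from \eqref{bounds_of_q_3}) to obtain
\begin{equation*}
\|q(t,\cdot)\|_{L_1(\bR^d)}=C\,t^{\alpha-1},\qquad \|D_i q(t,\cdot)\|_{L_1(\bR^d)}=C\,t^{\alpha/2-1},
\end{equation*}
so that the $L_1(\bR^{d+1})$-norms of $1_{0<t<T}q$ and $1_{0<t<T}D_i q$ are finite and bounded by a constant times $T^\alpha$ and $T^{\alpha/2}$ respectively. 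Young's convolution inequality then gives $\|L_k f\|_{L_{p_0}(\bR^{d+1})}\leq N_k(\alpha,d,p_0,T)\|f\|_{L_{p_0}(\bR^{d+1})}$ for $k=0,1$.

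To localize: since $\mathrm{supp}\,f\subset E:=[-(2\delta)^{2/\alpha},(2\delta)^{2/\alpha}]\times B_{3\delta d/2}$ and $(t,x)\in Q_\delta$, I choose the rectangle
\begin{equation*}
E':=[-(2\delta)^{2/\alpha},(2\delta)^{2/\alpha}]\times[-3\delta d/2,3\delta d/2]^d,
\end{equation*}
which is admissible in the definition of $\mathbb{M}$, contains both $E$ and $(t,x)$, and satisfies $|E'|\leq C(\alpha,d)|Q_\delta|$. Hence
\begin{equation*}
\int_{\bR^{d+1}}|f|^{p_0}\,dyds=\int_{E'}|f|^{p_0}\,dyds\leq |E'|\,\mathbb{M}|f|^{p_0}(t,x)\leq C|Q_\delta|\,\mathbb{M}|f|^{p_0}(t,x).
\end{equation*}
Combining with the global $L_{p_0}$-bound and dividing by $|Q_\delta|$ yields
\begin{equation*}
\aint_{Q_\delta}|L_k f(s,y)|^{p_0}\,dyds\leq \frac{N_k^{p_0}}{|Q_\delta|}\int_{\bR^{d+1}}|f|^{p_0}\,dyds\leq N'_k\,\mathbb{M}|f|^{p_0}(t,x),
\end{equation*}
which is exactly \eqref{bound_1_ineq}; the constant inherits the $T$-dependence of $N_k$, so it is independent of $T$ precisely when $k=2$.

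The only nontrivial input is the global $L_{p_0}$-bound for $L_2$ with a $T$-independent constant, and this is already packaged in Lemma \ref{representation_whole_space}(ii); everything else reduces to Young's inequality for $k=0,1$ and a deliberate choice of the rectangle $E'$ compatible with $\mathbb{M}$.
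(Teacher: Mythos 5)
Your proposal is correct and follows essentially the same route as the paper: for $k=0,1$ you use the $L_1(\bR^{d+1})$-bound on the truncated kernel (the paper phrases this via Minkowski's integral inequality, which is equivalent to your invocation of Young's inequality), and for $k=2$ you pull in the $T$-independent $L_{p_0}$-estimate \eqref{L_2 estimate} from Lemma \ref{representation_whole_space}(ii) after a translation, exactly as the paper does (the paper additionally rescales to $\delta=2$ via \eqref{scaling_qxx}, but that is cosmetic once the global bound is $T$-independent). Your explicit replacement of the cylinder $[-(2\delta)^{2/\alpha},(2\delta)^{2/\alpha}]\times B_{3\delta d/2}$ by the rectangle $E'$ is a welcome clarification of a step the paper glosses over, since $\bM$ is defined over rectangles rather than cylinders.
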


\begin{proof} 
\textbf{Step 1.} $(k = 0,1)$.  Let $k=0$. By Minkowski's inequality,
\begin{equation*}
\begin{aligned}
&\aint_{Q_{\delta}}  |L_{0}f(s,y)|^{p_{0}}  dyds
\\
&\quad \leq N(d)\delta^{-2/\alpha-d}\left( \int_0^T\int_{\bR^d} \left( \int_{Q_\delta}  |q(r,z)f(s-r,y-z)|^{p_0}  dyds\right)^{1/p_0} dzdr \right)^{p_0} \\
&\quad \leq N(d)\delta^{-2/\alpha-d}I_0^{p_0} \int_{\bR^{d+1}} |f(s,y)|^{p_0} dyds, \\
&\quad \leq N(d) I_0^{p_0} \aint_{[-(2\delta)^{2/\alpha},(2\delta)^{2/\alpha}]\times B_{3\delta d/2} } |f(s,y)|^{p_0} dyds, \\
\end{aligned}
\end{equation*}
where
\begin{equation*}
I_0 = I_0(T) = \int_0^T\int_{\bR^d} |q(r,z)|dzdr.
\end{equation*}
By \eqref{scaling_of_q}, change of variables, \eqref{bounds_of_q_2} and \eqref{bounds_of_q_1}, we have
\begin{equation} \label{during_proof_11}
\begin{gathered}
\int_0^T\int_{\bR^d} |q(r,z)|dzdr = \int_0^T |r|^{\alpha-1} dr\int_{\bR^d} |q(1,z)|dz  \leq N(\alpha,d).
\end{gathered}
\end{equation}
Therefore, we have \eqref{bound_1_ineq} for $k=0$. One can handle the case $k=1$ in the same manner.

\textbf{Step 2.} $(k = 2)$.  By \eqref{scaling_qxx}, we only need to consider $\delta = 2$.  Note that $f(t-4^{2/\alpha},\cdot) = 0$ for $t\leq 0$. By \eqref{eqn 05.16.1}, Lemma \ref{representation_whole_space} \eqref{representation2}, and change of variables, we have
\begin{equation*}
\begin{aligned}
\aint_{Q_{2}}|L_{2}f(s,y)|^{p_{0}}dyds 
&=\aint^{4^{2/\alpha}}_{-4^{2/\alpha}}\aint_{B_{3\delta/2}} |L_{2}f(s,y)|^{p_{0}}dyds\\
&\leq N(\alpha,d)\int_0^\infty\int_{\bR^d} |L_2(f(\cdot-4^{2/\alpha},\cdot))(s,y)|^{p_0}  dyds \\
& \leq N(\alpha,d,p_0) \int_0^\infty \int_{\bR^d} |f(s-4^{2/\alpha},y)|^{p_0}dyds \\
& \leq N(\alpha,d,p_0) \aint_{[-4^{2/\alpha},4^{2/\alpha}]\times B_{3d}}|f(s,y)|^{p_0}dyds. \\
\end{aligned}
\end{equation*}
The lemma is proved.
\end{proof}

\begin{lemma}\label{2_2_bound_2}
Let $p_{0}\in(1,\infty)$, $T<\infty$ and $f\in C^\infty_c(\R^{d+1})$. Assume $f=0$ for $|t|\geq (2\delta)^{2/\alpha}$. Then for  $(t,x)\in Q_{\delta}$,
\begin{equation} \label{bound_2_ineq}
\begin{gathered}
 \aint_{Q_{\delta}}| L_{k}f(s,y) |^{p_{0}}dyds \leq N_{k}(\alpha,d,p_0,T) \mathbb{M}|f|^{p_{0}}(t,x), \quad k=0,1, 
\\
\aint_{Q_{\delta}}| L_{2}f(s,y) |^{p_{0}}dyds \leq N_{2}(\alpha,d,p_0) \mathbb{M}|f|^{p_{0}}(t,x).
\end{gathered}
\end{equation}
\end{lemma}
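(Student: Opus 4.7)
I reduce Lemma~\ref{2_2_bound_2} to Lemma~\ref{2_2_bound_1} via a spatial decomposition. Setting $R = 3\delta d/2$, write $f = f_1 + f_2$ with $f_1 = f\cdot 1_{B_R}$. Since $f_1$ is supported in $[-(2\delta)^{2/\alpha},(2\delta)^{2/\alpha}]\times B_R$, it satisfies the hypothesis of Lemma~\ref{2_2_bound_1}, and the estimate for $L_k f_1$ follows immediately with the correct $T$-dependence for $k = 0, 1$ and $T$-independence for $k = 2$. The remaining task is to control $L_k f_2$, where $f_2$ is supported in $\{|r|\leq(2\delta)^{2/\alpha},\ |z|>R\}$.

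For $(s,y)\in Q_\delta$ one has $|y|\leq \delta\sqrt d/2$, so $|y-z|\geq |z|/2$ whenever $|z|>R$; the kernel is always evaluated in its far-field regime. I would apply the pointwise bound \eqref{useful_bound} with $\ep = 1$ for $k = 0, 1$, giving time exponents strictly greater than $-1$ and hence harmless time integration. For $k = 2$, I would use the sharper estimate $|D^2 q(t, u)| \leq N t^{\alpha-1}|u|^{-d-2}$, obtained by combining $|D^2 q(1, u)| \leq N|u|^{-d-2}$ (a consequence of \eqref{bounds_of_q_3}) with the scaling \eqref{scaling_of_q}, and valid throughout the far-field regime. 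A dyadic decomposition $\{|z|>R\} = \bigcup_{j\geq 0} A_j$ into annuli $A_j = \{2^j R\leq |z| < 2^{j+1}R\}$ then isolates the spatial scales. For each $j$ I select a single product box $E_j \subset \bR^{d+1}$ containing both $Q_\delta$ and $[-(2\delta)^{2/\alpha},(2\delta)^{2/\alpha}]\times A_j$, with $|E_j|\sim\delta^{2/\alpha}(2^j\delta)^d$, so that $\int_{E_j}|f|\,dr\,dz \leq |E_j|\,\mathbb{M}|f|(t,x)$ by the definition of $\mathbb{M}$.

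When the kernel's time exponent is nonnegative --- namely, $k = 0, 1$ for any $\alpha$, and $k = 2$ when $\alpha \geq 1$ --- the time weight $(s-r)^c$ is bounded by its supremum on $[0,(2\delta)^{2/\alpha}]$, and a direct computation shows that the $\delta$-powers cancel while the geometric series $\sum_{j\geq 0} 2^{-2j}$ converges, yielding the pointwise bound $|L_k f_2(s,y)| \leq N\mathbb{M}|f|(t,x)$. Jensen's inequality then produces $|L_k f_2(s,y)|^{p_0} \leq N\mathbb{M}|f|^{p_0}(t,x)$, which after averaging over $(s,y) \in Q_\delta$ gives the statement for these cases.

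The main obstacle is $k = 2$ with $\alpha < 1$: the factor $(s-r)^{\alpha-1}$ is integrable on $(0,L)$ with $L = (2\delta)^{2/\alpha}$ but unbounded as $r\to s^-$, so cannot be pulled out. I would resolve this via the one-dimensional bound $\int_{s-L}^s (s-r)^{\alpha-1}g(r)\,dr \leq N L^\alpha\,M_t g(s)$, proved by dyadic decomposition of $(0,L)$ in the variable $s-r$, applied with $g(r) = \int_{A_j}|f(r,z)|\,dz$. The factor $L^\alpha = (2\delta)^2$ combined with the spatial factor $|A_j|(2^j\delta)^{-d-2} \sim (2^j\delta)^{-2}$ again yields the convergent sum $\sum_j 2^{-2j}$, and $M_t g(s)$ is controlled by $|A_j|\,\mathbb{M}|f|(t,x)$ through a cube-enlargement argument using that $|s - t|\leq \delta^{2/\alpha} \sim L$. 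Because $L^\alpha$ depends only on $\delta$ and $\alpha$, the resulting constant $N_2$ is independent of $T$, as required.
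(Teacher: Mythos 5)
Your plan correctly identifies the same decomposition the paper uses (split off the part of $f$ supported near the origin and reduce to Lemma~\ref{2_2_bound_1}), and your derived bound $|D^2 q(t,u)|\leq N\,t^{\alpha-1}|u|^{-d-2}$ is correct. But the core of your argument for the far-field part $f_2$ has two genuine errors, and they sink the whole approach.

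First, the claim that the time exponent of the kernel bound is nonnegative ``for $k=0,1$ for any $\alpha$'' is simply false. Applying \eqref{useful_bound} with $\ep=1$ gives exponent $(1+\tfrac12)\alpha-1=\tfrac32\alpha-1$ for $k=0$ (negative for $\alpha<2/3$) and $\alpha-1$ for $k=1$ (negative for $\alpha<1$); no admissible $\ep\in[0,1]$ makes either nonnegative for all $\alpha\in(0,2)$. So the step ``bound $(s-r)^c$ by its supremum'' is unavailable for $k=0,1$ over most of the $\alpha$-range, not just for $k=2$ when $\alpha<1$. Second, and more seriously, the pointwise bound you use to close the argument whenever the time exponent \emph{is} negative --- namely $M_t g(s)\leq N|A_j|\,\mathbb{M}|f|(t,x)$ with $g(r)=\int_{A_j}|f(r,z)|\,dz$ --- is false. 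The one-dimensional maximal function $M_t g(s)$ involves a supremum over arbitrarily short intervals containing $s$; since $s$ lies inside the temporal support of $f$ here, as the interval shrinks the average tends to $g(s)$, which can be far larger than any fixed multiple of $\mathbb{M}|f|(t,x)$. (A spike of $f$ in $r$ near $s$, away from $t$, makes $M_t g(s)$ of order one while $\mathbb{M}|f|(t,x)$ is as small as one likes.) The ``cube-enlargement'' idea only works for intervals of length at least comparable to $\delta^{2/\alpha}$; it cannot control the small-scale part of $M_t g(s)$. Consequently the asserted pointwise inequality $|L_k f_2(s,y)|\leq N\,\mathbb{M}|f|(t,x)$ is not obtainable along these lines.

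The paper avoids both problems by not asking for a pointwise bound at all. After passing to $f(1-\zeta)$ it applies the kernel bound \eqref{useful_bound} with $\ep=1$ and the layer-cake identity of Lemma~\ref{useful_lemma}\eqref{int_by_part}, and then uses H\"older's inequality in the $(r,\rho)$-variables \emph{inside} the average $\aint_{Q_\delta}|L_0 f|^{p_0}$, pushing the $p_0$-th power onto $|f|^{p_0}$. The Hölder weight $I_0$ is bounded uniformly in $\delta$, and after Fubini the time integral of $|s-r|^{\alpha/2-1}$ over an interval of length $\sim\delta^{2/\alpha}$ produces exactly the power of $\delta$ needed to cancel the normalizing factor $|Q_\delta|^{-1}$. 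This works uniformly for all $\alpha\in(0,2)$ precisely because integrability (exponent $>-1$) suffices; boundedness of the time weight is never needed. To repair your argument you would have to replace the pointwise estimate on $|L_k f_2|$ with an $L^{p_0}$ average over $Q_\delta$ and invoke the $L^{p_0}$-boundedness of the one-dimensional maximal function before enlarging to a box containing $(t,x)$; at that point you are essentially reproducing the paper's Hölder-plus-Fubini computation.
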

\begin{proof} 
\textbf{Step 1.} $(k = 0,1)$.  Due to the similarity, we only consider $k=0$. Take $\zeta\in C_c^\infty(\mathbb{R}^d)$ such that $\zeta = 1$ in $B_{\delta d}$ and $\zeta = 0$ outside $B_{3\delta d/2}$. Since $L_{k}$ is linear, 
$$L_{k}f = L_{k}(f\zeta) + L_{k}(f (1- \zeta)).
$$
 For $L_{k}(f\zeta)$, we can apply  Lemma \ref{2_2_bound_1}, and therefore we may assume that $f(t,x) = 0$ if $x\in B_{\delta d}$.  Recall that 
\begin{equation*}
L_{0}f(s,y) = \int_{-\infty}^s\int_{\bR^d}1_{0<s-r<T}q(s-r,z)f(r,y-z)dzdr.
\end{equation*}
Let $(s,y)\in Q_\delta$ and $r\in(s-T,s)$. Since $|x-y|\leq \delta d$, $\rho>\delta d/2$ implies 
\begin{equation} \label{ball}
\quad B_{\rho}(y)\subset B_{\delta d+\rho}(x)\subset B_{3\rho}(x).
\end{equation}
Also, if $\rho\leq \delta d/2$ and $z\in B_{\rho}(0)$ then $f(r,y-z) = 0$  since $|y-z|\leq \delta d$. Now, observe that
\begin{equation} \label{during_proof_8}
\begin{aligned}
& \left|\int_{\bR^d}q(s-r,z)f(r,y-z)dz\right| \leq    N(\alpha,d,T) |s-r|^{\alpha/2-1}\int_{d\delta/2}^{\infty} \rho^{-d-2}\int_{B_{3\rho}(x)}|f(r,z)| dzd\rho.
\end{aligned}
\end{equation} 
Indeed, by Lemma \ref{useful_lemma} \eqref{bound_of_q_4}
\begin{eqnarray} 
  \nonumber
&& \bigg|\int_{\bR^d}q(s-r,z)f(r,y-z)dz\bigg|  \leq  N(\alpha,d) |s-r|^{\frac{3}{2}\alpha-1}\int_{|z|\geq d\delta/2}|z|^{-d-1}|f(r,y-z)|dz \\
&&= N(\alpha,d) T^{\alpha}  |s-r|^{\frac{1}{2}\alpha-1}\int_{|z|\geq d\delta/2}|z|^{-d-1}|f(r,y-z)|dz.   \label{eqn 10.22.1}
\end{eqnarray} 
By Lemma \ref{useful_lemma} \eqref{int_by_part}, \eqref{ball}, and $|s-r|<T$, we have
\begin{equation*} 
\begin{aligned}
&|s-r|^{\alpha/2-1}\int_{|z|\geq d\delta/2}|z|^{-d-1}|f(r,y-z)| dz
\\
&\quad=  N(d) |s-r|^{\alpha/2-1}\int_{d\delta/2}^{\infty} \rho^{-d-2}\int_{|z|\leq\rho}|f(r,y-z)| dzd\rho \\
&\quad\leq  N(d) |s-r|^{\alpha/2-1}\int_{d\delta/2}^{\infty} \rho^{-d-2}\int_{B_{3\rho}(x)}|f(r,z)| dzd\rho.
\end{aligned}
\end{equation*}
Thus, we have \eqref{during_proof_8}. By \eqref{during_proof_8} and H\"older's inequality
\begin{equation} \label{during_proof_2}
\begin{aligned}
&\aint_{Q_{\delta}}   |L_{0}f(s,y)|^{p_{0}} dyds  \\
& \leq N\aint_{Q_\delta}\left|  \int^s_{-(2\delta)^{2/\alpha}}|s-r|^{\alpha/2-1} \int_{\delta d/2}^\infty \rho^{-d-2}\int_{B_{3\rho}(x)} |f(r,z)| dz d\rho dr  \right|^{p_0}dyds \\
& \leq N\aint_{Q_\delta} I_0^{p_0-1} \int^s_{-(2\delta)^{2/\alpha}} |s-r|^{\alpha/2-1}\int_{\delta d/2}^\infty \rho^{-d-2}\int_{B_{3\rho}(x)} |f(r,z)|^{p_0} dz d\rho dr   dyds,
\end{aligned}
\end{equation}
where $N=N(\alpha,d,T)$ and 
\begin{equation*}
I_0 = I_0(\delta,s) := \int_{-(2\delta)^{2/\alpha}}^s |s-r|^{\alpha/2-1}dr \int_{\delta d/2}^\infty\rho^{-2}d\rho.
\end{equation*}
By a change of variables, we have
\begin{equation*}
I_0 \leq \int_0^{(2\delta)^{2/\alpha}}|r|^{\alpha/2-1}dr\int_{\delta d/2}^\infty\rho^{-2}d\rho \leq N(\alpha,d)\delta\delta^{-1} = N(\alpha,d).
\end{equation*}
By \eqref{during_proof_2} and Fubini's theorem,
\begin{equation} \label{eqn 05.17.1}
\begin{aligned}
&\aint_{Q_{\delta}}   |L_{0}f(s,y)|^{p_{0}} dyds  \\
&\leq N \aint_{Q_\delta} \int^s_{-(2\delta)^{2/\alpha}} |s-r|^{\alpha/2-1} \int_{\delta d/2}^\infty \rho^{-d-2}\int_{B_{3\rho}(x)} |f(r,z)|^{p_0} dz d\rho dr   dyds \\
&\quad\leq N\delta^{-2/\alpha}\int_{\delta d/2}^\infty\rho^{-d-2} \int_{-(2\delta)^{2/\alpha}}^0\int_r^0 |s-r|^{\alpha/2-1} ds \int_{B_{3\rho}(x)} |f(r,z)|^{p_0} dz   dr d\rho \\
&\quad\leq N\delta^{-2/\alpha}\int_{\delta d/2}^\infty\rho^{-d-2} \int_{-(2\delta)^{2/\alpha}}^0|r|^{\alpha/2} \int_{B_{3\rho}(x)} |f(r,z)|^{p_0} dz   dr d\rho \\
&\quad\leq N\delta^{1-2/\alpha}\int_{\delta d/2}^\infty\rho^{-d-2} \int_{-(2\delta)^{2/\alpha}}^0 \int_{B_{3\rho}(x)} |f(r,z)|^{p_0} dz   dr d\rho \\
&\quad \leq N \bM |f|^{p_0}(t,x),
\end{aligned}
\end{equation}
where $N=N(\alpha,d,p_0, T)$. Therefore, we have \eqref{bound_2_ineq}.

\textbf{Step 2.} $(k=2)$. By \eqref{scaling_qxx}, we may assume $\delta = 2$. Observe that
\begin{equation*}
\begin{aligned}
&\aint_{Q_{2}}|L_{2}f(s,y)|^{p_{0}}dyds \leq\int_{Q_2}\left| \int_{-\infty}^s\int_{\bR^d} D_{ij}q(s-r,z)f(r,y-z) dzdr \right|^{p_0}dyds
\end{aligned}
\end{equation*}
By Lemma \ref{useful_lemma} \eqref{bound_of_q_4},
$$
\bigg|\int_{\bR^d}D^2q(s-r,z)f(r,y-z)dz\bigg| \leq N(\alpha,d) |s-r|^{\alpha/2-1}\int_{|z|\geq d\delta/2}|z|^{-d-1}|f(r,y-z)|dz.
$$
Unlike in \eqref{eqn 10.22.1}, the constant $N$ above is independent of $T$. Thus, as  in \eqref{during_proof_8}, we get that for $0>s>r>-4^{2/\alpha}$ and $y\in[-1,1]^d$,
\begin{equation*}
\left| \int_{\bR^d}D_{ij}q(s-r,z)f(r,y-z)dz \right|\leq  N(\alpha,d) |s-r|^{\alpha/2-1}\int_{\delta/2}^\infty\rho^{-d-2} \int_{B_{3d\rho}(x)} |f(r,z)| dz.
\end{equation*}
This is because, By  \eqref{during_proof_2}, \eqref{eqn 05.17.1}, we have \eqref{bound_2_ineq}. The lemma is proved.
\end{proof}

\begin{lemma}\label{lem 05.17.1}
Let $p_0\in(1,\infty)$, $T<\infty$ and $f\in C_c^\infty(\R^{d+1})$. Suppose that $f=0$ outside of $(-\infty,-(\frac{3\delta}{2})^{2/\alpha})\times B_{3\delta d/2}$. Then for $(t,x)\in Q_{\delta}$,
\begin{equation} \label{bound_3_ineq}
\begin{gathered}
 \aint_{Q_{\delta}}| L_{k}f(s,y) |^{p_{0}}dyds \leq N_{k}(\alpha,d,p_0,T) \mathbb{M}|f|^{p_{0}}(t,x), \quad k=0,1
\\
\aint_{Q_{\delta}}| L_{2}f(s,y) |^{p_{0}}dyds \leq N_{2}(\alpha,d,p_0) \mathbb{M}|f|^{p_{0}}(t,x).
\end{gathered}
\end{equation}
\end{lemma}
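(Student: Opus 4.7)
The key feature of the hypothesis is a positive time separation: for $(s, y) \in Q_\delta$ and $(r, z) \in \mathrm{supp}(f)$, one has $s - r \geq c_\alpha \delta^{2/\alpha}$ with $c_\alpha := (3/2)^{2/\alpha} - 1 > 0$, and a bounded spatial distance $|y - z| \leq C_d \delta$ with $C_d := 3d/2 + \sqrt{d}/2$. Thus $D^\gamma q(s-r, y-z)$ is evaluated away from its temporal singularity at $s = r$, and the spatial argument is confined to a ball of radius $O(\delta)$.

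For $k = 0, 1$, the constant $N_k$ is allowed to depend on $T$, and the indicator $1_{0 < s-r < T}$ in the definition of $L_0^T, L_1^T$ confines the time integration to the finite interval $[c_\alpha\delta^{2/\alpha}, T]$. Combining Minkowski's inequality with the estimate $|D^\gamma q(t, x)| \leq N t^{(1 - |\gamma|/2 + \epsilon/2)\alpha - 1} |x|^{-d-\epsilon}$ from Lemma \ref{useful_lemma}\eqref{bound_of_q_4}, taking $|\gamma| \leq 1$ and some $\epsilon \in [-1, 0)$ so that $|y-z|^{-d-\epsilon}$ is locally integrable on $\{|y-z| \leq C_d \delta\}$, the argument then proceeds exactly as in Step 1 of Lemma \ref{2_2_bound_1}.

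For $k = 2$ the constant $N_2$ must be $T$-independent, which is the delicate point. By the scaling relation \eqref{scaling_qxx} we reduce to $\delta = 2$ and decompose $f = f_I + f_{II}$ using a smooth time cutoff $\eta$ that equals $1$ on $[-R_0^{2/\alpha}, -3^{2/\alpha}]$ and vanishes outside $[-2R_0^{2/\alpha}, 0]$, with $R_0 = R_0(\alpha, d)$ chosen large enough (specifically, $R_0 \geq C_d$ suffices) that for every $(s, y) \in Q_2$ and $(r, z) \in \mathrm{supp}(f_{II})$ the ratio $R := (s-r)^{-\alpha}|y-z|^2$ satisfies $R \leq 1$. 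For $L_2 f_I$: since $f_I$ has support in a box of volume depending only on $\alpha, d$, a time translation places its support in $t > 0$, and Lemma \ref{representation_whole_space}\eqref{representation2} (whose $L^{p_0}$ constant is $T$-independent) provides a global $L^{p_0}$ bound on $L_2 f_I$; averaging over $Q_2$, which sits inside an enlarged cube of volume $\sim |Q_2|$ up to a factor depending only on $\alpha, d$, yields $\aint_{Q_2} |L_2 f_I|^{p_0} \leq N \bM|f|^{p_0}(t, x)$. For $L_2 f_{II}$: the bound $R \leq 1$ unlocks the sharper kernel estimate $|D^2 q(t, x)| \leq N t^{-\alpha - 1} |x|^{2 - d}$ (with logarithmic corrections when $d = 2$) from \eqref{bounds_of_q_1}; the time factor $t^{-\alpha - 1}$ is integrable over $s - r \geq R_0^{2/\alpha}$, and the spatial factor $|y-z|^{2-d}$ (locally integrable for $d \geq 3$, and better-behaved for $d = 1, 2$) is converted via Lemma \ref{useful_lemma}\eqref{int_by_part} together with the inclusion $B_\rho(y) \subseteq B_{3\rho}(x)$ into $\rho$-integrals of local averages, which are dominated by $\bM|f|^{p_0}(t, x)$ exactly as in Step 2 of Lemma \ref{2_2_bound_2}.

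The principal obstacle is achieving $T$-independence for $L_2 f_{II}$ despite the unbounded temporal support of $f$. This rests on the observation that in the far-past regime $R$ is automatically $\leq 1$, which is precisely what brings the stronger time decay in \eqref{bounds_of_q_1} into play; without this, the exponent $(\epsilon/2)\alpha - 1$ available from Lemma \ref{useful_lemma}\eqref{bound_of_q_4} (with $\epsilon \in [0,1]$) is not integrable at infinity and the naive Minkowski argument fails. The logarithmic corrections for $d = 1, 2$ require only routine adjustments.
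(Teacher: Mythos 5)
For $k=0,1$ there is a genuine gap. You claim the argument ``proceeds exactly as in Step 1 of Lemma \ref{2_2_bound_1},'' i.e.\ via Minkowski's inequality. That step works in Lemma \ref{2_2_bound_1} because $f$ there is supported in a space--time box of measure comparable to $|Q_\delta|\sim\delta^{2/\alpha+d}$, so the factor $\delta^{-2/\alpha-d}$ from the average is cancelled by the box measure when one relates $\int_{\R^{d+1}}|f|^{p_0}$ to $\bM|f|^{p_0}(t,x)$. In the present lemma the effective temporal support of $f$ (after the cut by $1_{0<s-r<T}$) has length of order $T$, so the smallest box containing it and $(t,x)$ has measure of order $T\delta^d$, not $\delta^{2/\alpha+d}$. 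Tracing constants, the Minkowski route yields $\aint_{Q_\delta}|L_0f|^{p_0}\,dyds\leq N(\alpha,d,T)\,\delta^{p_0-2/\alpha}\,\bM|f|^{p_0}(t,x)$ (using $\ep=-1$ in Lemma \ref{useful_lemma}\eqref{bound_of_q_4}), and $\delta^{p_0-2/\alpha}$ is unbounded as $\delta\downarrow 0$ whenever $p_0<2/\alpha$; since $p_0$ is eventually taken small in the proof of Theorem \ref{thm 05.06.3}, this is fatal. The paper avoids this with two devices you omit: it uses H\"older (so the kernel's total mass $I_0$ enters only as $I_0^{p_0-1}$, and $I_0$ is bounded uniformly in $\delta$ by the trade $|s-r|^{\alpha/2-1}\leq T^{\alpha}|s-r|^{-\alpha/2-1}$), and then, after Fubini and $|s-r|\sim|r|$ (see \eqref{comparable}), it integrates by parts in $r$ to convert $\int_{-\infty}^{-(3\delta/2)^{2/\alpha}}|r|^{-\alpha/2-1}\int_{B}|f(r,\cdot)|^{p_0}\,dr$ into $\int|r|^{-\alpha/2-2}\int_r^0\int_{B}|f|^{p_0}\,dr$, which exposes a genuine space--time average over a box containing $(t,x)$.

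For $k=2$ your decomposition $f=f_I+f_{II}$ together with the $R\leq1$ bound $|D^2q(t,x)|\leq Nt^{-\alpha-1}|x|^{2-d}$ from \eqref{bounds_of_q_1} is a genuinely different route: the paper instead takes $\beta\in(1,d/(d-1))$, bounds $\int_{|z|\leq4d}|D_{ij}q(s-r,z)|\,dz$ via Jensen and \eqref{scaling_of_q} by $|s-r|^{-(1-1/\beta)\alpha d/2-1}$, and applies spatial Minkowski followed by integration by parts in $r$. Your $L_2f_I$ treatment (translation plus Lemma \ref{representation_whole_space}\eqref{representation2}) is sound, but for $L_2f_{II}$ the reference ``exactly as in Step 2 of Lemma \ref{2_2_bound_2}'' again does not cover the unbounded-time regime: that step handles $f$ supported in $|t|\leq(2\delta)^{2/\alpha}$ and never has to convert a far-past $r$-integral into averages over boxes containing $(t,x)$. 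You still need the integration-by-parts step in $r$ (present in the paper's own $k=2$ computation here) to pass from $\int_{-\infty}^{-R_0^{2/\alpha}}|r|^{-\alpha-1}\int_{B}|f(r,\cdot)|^{p_0}\,dr$ to a maximal-function bound; with that step made explicit, your $k=2$ argument appears to close.
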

\begin{proof}
\textbf{Step 1.} $(k = 0,1)$. As in the proof of Lemma \ref{2_2_bound_2}, we only consider the case $k = 0$. Let $(s,y)\in Q_{\delta}$. Notice that if $|z|\geq 2\delta d$ then $f(r,y-z) = 0$ since $|y-z|\geq |z| - |y|\geq3\delta d/2$. Also, $f(r,\cdot)=0$ if $r\geq-(3\delta/2)^{2/\alpha}$. Thus by H\"older's inequality,
\begin{equation}
\begin{aligned}
|L_0f(s,y)|^{p_0}
& \leq I_0^{p_0-1}\int_{-\infty}^{-(\frac{3\delta}{2})^{2/\alpha}} \int_{|z|\leq 2\delta d}1_{0<s-r<T}|q(s-r,z)||f(r,y)|^{p_0}dzdr,
\end{aligned}
\end{equation}
where
\begin{equation*}
I_0 = I_0(\delta,s) = \int_{-\infty}^{-(3\delta/2)^{2/\alpha}}1_{0<s-r<T}\int_{|z|\leq 2\delta d}|q(s-r,z)|dzdr.
\end{equation*}
Observe that for $r<s<r+T$, by Lemma \ref{useful_lemma} \eqref{bound_of_q_4},
\begin{equation} \label{during_proof_4}
\begin{aligned}
\int_{|z|\leq 2\delta d}|q(s-r,z)|dz &\leq N|s-r|^{\alpha/2-1}\int_{|z|\leq 2\delta d} |z|^{-d+1}dz \\
&\leq N(\alpha,d,T) |s-r|^{-\alpha/2-1} \delta.
\end{aligned}
\end{equation}
Also, for $s\in(-\delta^{2/\alpha},0)$ and $r\in(-\infty,-(3\delta/2)^{2/\alpha})$, we have
\begin{equation} \label{comparable}
|r-s|\leq 2|r|\leq N(\alpha)|r-s|.
\end{equation}
Thus, by \eqref{during_proof_4} and \eqref{comparable}
\begin{equation} \label{during_proof_9}
\begin{aligned}
I_0(\delta,s) & \leq N(\alpha,d,T) \,\,\delta \int_{-\infty}^{-(3\delta/2)^{2/\alpha}} |r|^{-\alpha/2-1}dr \leq N(\alpha,d,T).
\end{aligned}
\end{equation}
Therefore, by Fubini's theorem, change of variables, \eqref{during_proof_4}, and \eqref{comparable}, we have
\begin{equation*}
\begin{aligned}
&\int_{Q_\delta} |L_0f(s,y)|^{p_0} dyds\\
&\quad\leq N \int_{Q_\delta}\int_{s-T}^{-(3\delta/2)^{2/\alpha}}\int_{|z|\leq 2\delta d}|q(s-r,z)||f(r,y-z)|^{p_0}dzdrdyds \\
&\quad\leq N \int_{-\delta^{2/\alpha}}^0\int_{s-T}^{-(3\delta/2)^{2/\alpha}}\int_{|z|\leq 2\delta d}|q(s-r,z)|dz\int_{|y|\leq5\delta d/2}|f(r,y)|^{p_0}dydrds \\
&\quad\leq N\delta\int_{-\delta^{2/\alpha}}^0\int_{-\infty}^{-(3\delta/2)^{2/\alpha}}|s-r|^{-\alpha/2-1}\int_{|y|\leq5\delta d/2}|f(r,y)|^{p_0}dydrds\\
&\quad\leq N\delta^{1+2/\alpha}\int_{-\infty}^{-(3\delta/2)^{2/\alpha}}|r|^{-\alpha/2-1}\int_{|y|\leq 5\delta d/2}|f(r,y)|^{p_0}dydr,
\end{aligned}
\end{equation*}
where the constants $N$ above depend only on $\alpha,d,p_0$ and $T$.
By integration by parts, we have
\begin{equation*}
\begin{aligned}
\int_{Q_\delta} |L_0f(s,y)|^{p_0}dyds & \leq N \delta^{2/\alpha+1}\int_{-\infty}^{-(3\delta/2)^{2/\alpha}}|r|^{-\alpha/2-2}\int_{r}^0\int_{|y|\leq 5\delta d/2}|f(\sigma,y)|^{p_0}dyd\sigma dr\\
& \leq N\delta^{d+2/\alpha+1}\int_{-\infty}^{-(3\delta/2)^{2/\alpha}}|r|^{-\alpha/2-1} dr\bM|f|^{p_0}(t,x)\\
& \leq N(\alpha,d,p_{0},T) \delta^{d+2/\alpha} \bM |f|^{p_0}(t,x).
\end{aligned}
\end{equation*}
Therefore, we have \eqref{bound_3_ineq}.

\textbf{Step 2.} $(k=2)$. By \eqref{scaling_qxx}, we may assume $\delta = 2$. Take $\beta\in (1,\frac{d}{d-1})$ (here, $\frac{1}{0}:=\infty$). By Minkowski's inequality
\begin{equation} \label{during_proof_6}
\begin{aligned}
&\aint_{Q_2} |L_2f(s,y)|^{p_0} dyds \\
& =N(\alpha,d) \int_{-2^{2/\alpha}}^0\int_{[-1,1]^d} \bigg| \int_{-\infty}^{-3^{2/\alpha}}\int_{|z|\leq 4 d}|D_{ij}q(s-r,z)f(r,y-z)|dzdr \bigg|^{p_0}dyds \\
&\leq N(\alpha,d) \int_{-2^{2/\alpha}}^0 \left( \int_{-\infty}^{-3^{2/\alpha}}\int_{|z|\leq 4 d}|D_{ij}q(s-r,z)|dz\left(\int_{|y|\leq 5 d} |f(r,y)|^{p_0}dy\right)^{\frac{1}{p_0}}dr \right)^{p_0}ds. \\
\end{aligned}
\end{equation}
By \eqref{scaling_of_q}, Jensen's inequality, change of variables, \eqref{bounds_of_q_2} and \eqref{bounds_of_q_1}, we have
\begin{equation*}
\begin{aligned}
\int_{|z|\leq 4 d} |D_{ij}q(s-r,z)|dz 
&\leq|s-r|^{-(1-\frac{1}{\beta})\frac{\alpha d}{2}-1}\left(\int_{\bR^d}|D_{ij}q(1,z)|^{\beta}dz\right)^{1/\beta}\\
& \leq N(\alpha,\beta,d) |s-r|^{-(1-\frac{1}{\beta})\frac{\alpha d}{2}-1}.
\end{aligned}
\end{equation*}
Also note that one can replace $|s-r|$ with $|r|$ if $r<-3^{2/\alpha}$, and $-2^{2/\alpha}<s<0$. Therefore, 
\begin{equation*}
\begin{aligned}
& \aint_{Q_2} |L_2f(s,y)|^{p_0} dyds \\
&\leq N(\alpha,\beta,d) \int^0_{-2^{2/\alpha}} (\int^{-3^{\alpha/2}}_{-\infty}|r|^{-(1-\frac{1}{\beta})\frac{\alpha d}{2}-1} (\int_{|y|\leq5d}|f(r,y)|^{p_0}dy)^{1/p_0} dr)^{p_0}ds.
\end{aligned}
\end{equation*}
By \eqref{comparable}, H\"older's inequality, and integration by parts, we have
\begin{equation}
\begin{aligned}
& \int^0_{-2^{2/\alpha}} ( \int_{-\infty}^{-3^{2/\alpha}}|r|^{-(1-\frac{1}{\beta})\frac{\alpha d}{2}-1}(\int_{|y|\leq5d}|f(r,y)|^{p_0}dy)^{1/p_0}dr )^{p_0}ds\\
&\quad = N(\alpha) \left( \int_{-\infty}^{-3^{2/\alpha}}|r|^{-(1-\frac{1}{\beta})\frac{\alpha d}{2}-1}(\int_{|y|\leq5d}|f(r,y)|^{p_0}dy)^{1/p_0}dr \right)^{p_0} \\
&\quad\leq N(\alpha,\beta,d,p_0) \int_{-\infty}^{-3^{2/\alpha}}|r|^{-(1-\frac{1}{\beta})\frac{\alpha d}{2}-1}\int_{|y|\leq5d}|f(r,y)|^{p_0}dydr \\
&\quad\leq N(\alpha,\beta,d,p_0) \int_{-\infty}^{-3^{2/\alpha}} |r|^{-(1-\frac{1}{\beta})\frac{\alpha d}{2}-2} \int_{r}^0\int_{|y|\leq5d}|f(s,y)|^{p_0}dyds dr\\
&\quad\leq N(\alpha,\beta,d,p_0) \int_{-\infty}^{-3^{2/\alpha}} |r|^{-(1-\frac{1}{\beta})\frac{\alpha d}{2}-1}dr\bM |f|^{p_0}(t,x)\\
&\quad\leq N(\alpha,d,p_0) \bM |f|^{p_0}(t,x).
\end{aligned}
\end{equation}
The lemma is proved.
\end{proof}

\begin{lemma}\label{2_2_bound_3}
Let $p_0\in(1,\infty)$, $T<\infty$, and $f\in C_c^\infty(\R^{d+1})$. Suppose that $f=0$ outside of $(-\infty,-(\frac{3\delta}{2})^{2/\alpha})\times B_{\delta d}^c$. Then for $(t,x)\in Q_{\delta}$,
\begin{equation} \label{bound_4_ineq}
\begin{gathered}
\aint_{Q_{\delta}} \aint_{Q_{\delta}} | L_{k}f(s,y) - L_{k}f(r,z) |^{p_{0}}dydsdzdr \leq N(\alpha,d,p_0,T) \mathbb{M}|f|^{p_{0}}(t,x), \quad (k=0,1)
\\
\aint_{Q_{\delta}} \aint_{Q_{\delta}} | L_{2}f(s,y) - L_{2}f(r,z) |^{p_{0}}dydsdzdr \leq N(\alpha,d,p_0) \mathbb{M}|f|^{p_{0}}(t,x).
\end{gathered}
\end{equation}
\end{lemma}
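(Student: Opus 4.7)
The hypothesis places $\mathrm{supp}(f)$ strictly outside $Q_\delta$ in both time and space. Consequently, for every $(s,y)\in Q_\delta$ and $(\tau,w)\in\mathrm{supp}(f)$, the arguments $(s-\tau, y-w)$ keep both coordinates uniformly bounded away from $0$, so the kernels $q,D_iq,D_{ij}q$ are smooth on the region of integration. The plan is to exploit this by bounding the oscillation $L_kf(s,y)-L_kf(r,z)$ via a first-order Taylor expansion of the kernel along the straight segment joining $(s-\tau,y-w)$ and $(r-\tau,z-w)$; the resulting extra factors $|s-r|\le 2\delta^{2/\alpha}$ and $|y-z|\le\sqrt{d}\,\delta$ compensate for the increased singularity of $\partial_t\kappa_k$ and $\nabla\kappa_k$, reducing the estimate to one of the same type as in Lemma~\ref{lem 05.17.1}.

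Concretely, set $\sigma_\lambda=(1-\lambda)r+\lambda s$, $\eta_\lambda=(1-\lambda)z+\lambda y$, and let $\kappa_k$ denote the kernel of $L_k$, so $\kappa_0=q$, $\kappa_1=D_iq$, $\kappa_2=D_{ij}q$. The fundamental theorem of calculus gives
\begin{equation*}
\kappa_k(s-\tau,y-w)-\kappa_k(r-\tau,z-w)=\int_0^1\Big[(s-r)\,\partial_t\kappa_k+(y-z)\cdot\nabla\kappa_k\Big](\sigma_\lambda-\tau,\eta_\lambda-w)\,d\lambda,
\end{equation*}
and hence, integrating against $f$ over its support,
\begin{equation*}
|L_kf(s,y)-L_kf(r,z)|\le N\,\delta^{2/\alpha}\iint|\partial_t\kappa_k|\,|f|\,dwd\tau \;+\; N\,\delta\iint|\nabla\kappa_k|\,|f|\,dwd\tau,
\end{equation*}
with the derivatives of $\kappa_k$ evaluated at intermediate points. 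For $k=2$ one first reduces to $\delta=2$ via \eqref{scaling_qxx}; for $k=0,1$ one works directly with general $\delta$, the $T$-dependence entering through the cutoff $1_{0<s-\tau<T}$.

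The needed pointwise bounds on $\nabla\kappa_k$ and $\partial_t\kappa_k$ come from combining the scaling identity \eqref{scaling_of_q} with \eqref{bounds_of_q_3} (to control $|D^{m+1}q|$ for $m\le 2$) and with \eqref{bounds_of_q_2}--\eqref{bounds_of_q_1} specialized to $n=1$ (to control $|\partial_t D^m q|$). These produce estimates of the schematic form $|D^{m+1}q(t,x)|\le N|x|^{-d-m-1}|t|^{\alpha/2-1}$, together with an analogous bound for $|\partial_t D^m q|$ that carries enough decay in $(t,x)$ to be integrable against $f$ on its support. After substitution into the display above, the remainder of the proof parallels the argument in Lemma~\ref{lem 05.17.1}: perform the $w$-integration by the radial layer-cake formula of Lemma~\ref{useful_lemma}\eqref{int_by_part}, thereby introducing the averages $|B_{3\rho}(x)|^{-1}\int_{B_{3\rho}(x)}|f(\tau,z)|\,dz$; apply H\"older's inequality to bring the exponent $p_0$ inside; then integrate by parts in $\tau$ to recognize the nested integral as a multiple of $\mathbb{M}|f|^{p_0}(t,x)$.

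The main obstacle is producing the clean pointwise bounds on $\partial_t\kappa_k$, which are not recorded in Lemma~\ref{useful_lemma} and so must be derived case-by-case by splitting into the regimes $R=t^{-\alpha}|x|^2\le 1$ and $R\ge 1$ of \eqref{bounds_of_q_2}--\eqref{bounds_of_q_1} and unifying the two bounds by the same elementary majorizations used in the proof of Lemma~\ref{useful_lemma}\eqref{bound_of_q_4}. A secondary technicality, relevant only for $k=0,1$, is that the cutoff $1_{0<s-\tau<T}$ is not smooth in $s$, so the mean-value argument must be supplemented by a separate treatment of $\tau$ near the time-boundary $s-T$; for such $\tau$, however, $q(s-\tau,\cdot)$ and $D_iq(s-\tau,\cdot)$ are already controlled in $L_1$ with a $T$-dependent constant (much as in the $T$-dependent estimates in the proofs of Lemmas~\ref{2_2_bound_1} and \ref{2_2_bound_2}), producing a contribution consistent with the $T$-dependence of $N_0,N_1$ stated in the lemma.
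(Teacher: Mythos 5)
Your plan for $k=2$ is essentially the paper's: after rescaling to $\delta=2$, the paper invokes Poincar\'e's inequality to reduce the oscillation bound to bounds on $\partial_s L_2 f$ and $D_y L_2 f$, and your fundamental-theorem-of-calculus expansion along the segment joining $(s,y)$ and $(r,z)$ is precisely the proof of that Poincar\'e inequality; the ensuing derivative bounds on $q_{xx}$ and $\partial_t q_{xx}$ via scaling and \eqref{bounds_of_q_3} also match the paper. So for $k=2$ you are on the same track, though your claim that ``the remainder parallels Lemma~\ref{lem 05.17.1}'' is glossing over the real work there: the paper raises the kernel to the power $1\pm p_0\varepsilon$ via H\"older (with $\varepsilon$ chosen in a precise window) to make the time integral converge, which is not the same as the simpler $\beta$-interpolation used in Lemma~\ref{lem 05.17.1}.

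For $k=0,1$, however, you are taking a genuinely different and, I think, harder route. The paper observes at the outset of Step~1 that no oscillation argument is needed at all: since $|L_kf(s,y)-L_kf(r,z)|^{p_0}\le 2^{p_0}(|L_kf(s,y)|^{p_0}+|L_kf(r,z)|^{p_0})$, it suffices to bound $\aint_{Q_\delta}|L_kf|^{p_0}$ directly, and this works because the cutoff $1_{0<s-r<T}$ confines the time integration to an interval of length $T$ on which $q$ (respectively $D_iq$) is integrable; that is exactly where the $T$-dependence of $N_0,N_1$ comes from. The paper then still has to split into the cases $\delta\ge 1$ and $\delta<1$, using different $\varepsilon$-exponents in Lemma~\ref{useful_lemma}\eqref{bound_of_q_4} for the near and far layers, which is nontrivial but avoids any mean-value or boundary analysis. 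Your Taylor approach for $k=0,1$ introduces the very complication you flag — the non-differentiability of $1_{0<s-\tau<T}$ in $s$ — and your proposed fix (treating the $\tau$-layer near $s-T$ by an $L_1$ bound) is only sketched; making it rigorous would require estimating the measure-$|s-r|$ slab of $\tau$'s on which the indicators disagree and controlling $\int_{B_{3\rho}(x)}|f|^{p_0}$ there, a step that your one sentence does not supply. I would recommend adopting the paper's observation and dropping the Taylor expansion entirely for $k=0,1$: it simplifies the argument, naturally produces the $T$-dependence, and keeps the oscillation machinery only where it is actually needed, namely $k=2$ where the kernel $D_{ij}q$ has just $|t|^{-1}$-type time decay and the direct bound would fail.
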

\begin{proof}
\textbf{Step 1.} $(k=0,1)$. Again, due to the similarity we only consider the case $k=0$. 

Obviously, to prove the claim  it suffices to show that
\begin{equation*}
\aint_{Q_\delta}|L_k f|^{p_0} \leq N(\alpha,d,p_0,T)  \bM|f|^{p_0}(t,x), \quad \forall \, (t,x)\in Q_{\delta}.
\end{equation*}
 Note that $f(r,y-z) = 0$ for $r\geq -(3\delta/2)^{2/\alpha}$ or $(y,z)\in [-\delta/2,\delta/2]^d\times B_{\delta d/2}$. For $(s,y)\in Q_\delta$, by H\"older's inequality
\begin{equation} \label{third_partition_ineq}
\begin{aligned}
| L_0f(s,y) |^{p_0}  &= \left|\int_{-\infty}^s\int_{\bR^d}  1_{0<s-r<T}q(s-r,z)f(r,y-z) dzdr\right|^{p_0}  \\
&\leq I_0^{p_0-1}\int_{-\infty}^s\int_{|z|\geq \frac{\delta d}{2}} 1_{0<s-r<T}| q(s-r,z)||f(r,y-z)|^{p_0} dzdr, \\
\end{aligned}
\end{equation}
where
\begin{equation*}
I_0 = I_0(s, \delta) = \int_{-\infty}^s 1_{0<s-r<T}\int_{\bR^d} |q(s-r,z)|dzdr.
\end{equation*}
By \eqref{during_proof_11}, we have
\begin{equation*}
\begin{aligned}
I_0 \leq \int_0^T r^{\alpha -1}dr\int_{\bR^d}|q(1,z)|dz\leq N(\alpha,d,T).
\end{aligned}
\end{equation*}
Thus, 
\begin{equation} \label{during_proof_12}
\begin{aligned}
&\int_{Q_\delta}| L_0f(s,y)|^{p_0}dyds\\
&\leq N(\alpha,d,T) \int_{Q_\delta}\int_{-\infty}^{-\left(\frac{3\delta}{2}\right)^{\frac{2}{\alpha}}} 1_{0<s-r<T}\int_{|z|\geq\frac{\delta d}{2}}|q(s-r,z)||f(r,y-z)|^{p_0}dzdrdyds. \\
\end{aligned}
\end{equation}
Note that for $x,y\in [-\delta/2,\delta/2]^d$ and $\rho>\delta d/2$, we have 
\begin{equation} \label{ball2}
B_\rho(y)\subset B_{(2/\sqrt{d}+1)\rho}(x)\subset B_{3\rho}(x).
\end{equation}
To proceed further, we consider  two differenent cases. 

\textbf{Case 1.} ($\delta \geq 1$). Let $\ep\in (0,1)$. By Lemma \ref{useful_lemma} \eqref{bound_of_q_4}, we have
\begin{equation} \label{during_proof_10}
|q(s-r,z)| \leq N(\alpha,d, \varepsilon) |s-r|^{(1+\frac{\ep}{2})\alpha-1}|z|^{-d-\ep}.
\end{equation}
By \eqref{during_proof_12}, \eqref{during_proof_10}, Lemma \ref{useful_lemma} \eqref{int_by_part}, \eqref{ball2}, Fubini's theorem, \eqref{comparable}, and integration by parts, we have
\begin{equation*}
\begin{aligned}
& \int_{Q_\delta}| L_0f(s,y)|^{p_0}dyds \\
&\quad\leq N \int_{Q_\delta}\int^{-\left(\frac{3\delta}{2}\right)^{2/\alpha}}_{s-T}|s-r|^{(1+\frac{\ep}{2})\alpha - 1}\int_{|z|\geq\frac{\delta d}{2}}|z|^{-d-\ep}|f(r,y-z)|^{p_0}dzdrdyds \\
&\quad\leq N \int_{Q_\delta}\int_{s-T}^{-\left(\frac{3\delta}{2}\right)^{2/\alpha}}|s-r|^{(1+\frac{\ep}{2})\alpha - 1}\int^\infty_{\frac{\delta d}{2}}\rho^{-\ep-1}\aint_{B_{3\rho}(x)}|f(r,z)|^{p_0}dzd\rho drdyds \\
&\quad\leq N\delta^d\int^\infty_{\frac{\delta d}{2}}\rho^{-\ep-1}\int_{-\delta^{2/\alpha}}^0 \int^{-\left(\frac{3\delta}{2}\right)^{2/\alpha}}_{-\infty} |s-r|^{ -\frac{\alpha\ep}{2}- 1}\aint_{B_{3\rho}(x)}|f(r,z)|^{p_0}dz dr ds d\rho \\
&\quad\leq N\delta^d\int^\infty_{\frac{\delta d}{2}}\rho^{-\ep-1}\int_{-\delta^{2/\alpha}}^0 \int^{-\left(\frac{3\delta}{2}\right)^{2/\alpha}}_{-\infty} |r|^{ -\frac{\alpha\ep}{2}- 2}\int_r^0\aint_{B_{3\rho}(x)}|f(\sigma,z)|^{p_0}dzd\sigma dr ds d\rho \\
&\quad\leq N\delta^{d+2/\alpha-2\ep}\bM|f|^{p_0}(t,x),
\end{aligned}
\end{equation*}
where $N=N(\alpha,d,p_0,\varepsilon,T)$. Therefore, we have (recall that $\delta \geq 1$)
\begin{equation*}
\aint_{Q_\delta}| L_0f(s,y)|^{p_0}dyds \leq \delta^{-2\ep}N \bM|f|^{p_0}(t,x) \leq N \bM|f|^{p_0}(t,x).
\end{equation*}

\textbf{Case 2.} ($\delta <1$). Let $\ep\in(0,1)$. By Lemma \ref{useful_lemma} \eqref{bound_of_q_4}, we have
\begin{equation} \label{during_proof_13}
\begin{aligned}
|q(s,z)|  &\leq (1_{|z|\leq d/2}+1_{|z|\geq d/2})|q(s,z)| \\
&\leq N(\alpha,d,\varepsilon) \left( 1_{|z|\leq \frac{d}{2}}|s|^{(1-\frac{\ep}{2})\alpha-1}|z|^{-d+\ep}+1_{|z|\geq \frac{d}{2}}|s|^{(1+\frac{\ep}{2})\alpha-1}|z|^{-d-\ep}\right).
\end{aligned}
\end{equation}
Also, for $x,y\in [-\delta/2,\delta/2]^d$,
\begin{equation} \label{ball3}
B_{d/2}(y)\subset B_{\sqrt{d}\delta+d/2}(x)\subset B_{3d/2}(x).
\end{equation}
Then by \eqref{during_proof_12}, \eqref{during_proof_13},
\begin{equation*}
\begin{aligned}
& \int_{Q_\delta}| L_0f(s,y)|^{p_0}dyds \\
& \leq N\int_{Q_\delta}\int_{-\infty}^{-\left(\frac{3\delta}{2}\right)^{2/\alpha}} \int_{|z|\geq\frac{\delta d}{2}}1_{0<s-r<T}|q(s-r,z)||f(r,y-z)|^{p_0}dzdrdyds \\
&\leq N\int_{Q_\delta}\int^{-\left(\frac{3\delta}{2}\right)^{2/\alpha}}_{s-T}|s-r|^{(1-\frac{\ep}{2})\alpha-1}\int_{\frac{\delta d}{2}\leq|z|\leq\frac{d}{2}}|z|^{-d+\ep}|f(r,y-z)|^{p_0}dzdrdyds\\
&\quad\quad+N\int_{Q_\delta}\int^{-\left(\frac{3\delta}{2}\right)^{2/\alpha}}_{s-T}|s-r|^{(1+\frac{\ep}{2})\alpha-1}\int_{|z|\geq\frac{d}{2}}|z|^{-d-\ep}|f(r,y-z)|^{p_0}dzdrdyds \\
&=: N(I_1+I_2),
\end{aligned}
\end{equation*}
where $N=N(\alpha,d,p_{0},\varepsilon,T)$. 
Therefore, it suffices to show that
\begin{equation*}
I_1+I_2\leq N(\alpha,d,p_0,T) \delta^{d+2/\alpha}\bM|f|^{p_0}(t,x).
\end{equation*}
Consider $I_1$. Note that $|y-z|\leq \delta d$ if $z\in B_{\delta d/2}$, and $y\in[-\delta/2,\delta/2]$. Using \eqref{int_by_part1}, \eqref{ball3}, \eqref{ball2}, and the assumption on the support of $f$, observe that
\begin{equation*}
\begin{aligned}
&\int_{\frac{\delta d}{2}\leq|z|\leq\frac{d}{2}}|z|^{-d+\ep}|f(r,y-z)|^{p_0}dz \\
&=(d/2)^{-d+\varepsilon}\int_{B_{d/2}(0)}|f(r,y-z)|^{p_0}dz
-(\delta d/2)^{-d+\varepsilon} \int_{B_{\delta d/2}(0)}|f(r,y-z)|^{p_0}dz\\
&\quad + (d-\varepsilon) \int^{\frac{d}{2}}_{\frac{\delta d}{2}}\rho^{-d+\ep-1}\int_{B_{\rho}(0)}|f(r,y-z)|^{p_0}dzd\rho\\
& \quad\leq N(d) \left[\int_{B_{3d/2}(x)}|f(r,z)|^{p_0}dz +\int^{\frac{d}{2}}_{\frac{\delta d}{2}}\rho^{-d+\ep-1}\int_{B_{3\rho}(x)}|f(r,z)|^{p_0}dzd\rho\right].
\end{aligned}
\end{equation*}
Therefore,  
\begin{equation*}
\begin{aligned}
I_1 &\leq N \int_{Q_\delta}\int^{-\left(\frac{3\delta}{2}\right)^{2/\alpha}}_{s-T}|s-r|^{(1-\frac{\ep}{2})\alpha-1} \Big[\int_{B_{3d/2}(x)}|f(r,z)|^{p_0}dz \\
&\quad\quad\quad\quad\quad+\int^{\frac{d}{2}}_{\frac{\delta d}{2}}\rho^{-d+\ep-1}\int_{B_{3\rho}(x)}|f(r,z)|^{p_0}dzd\rho\Big] dr dyds\leq N(I_{11}+I_{12}),
\end{aligned}
\end{equation*}
where
\begin{equation*}
\begin{aligned}
I_{11}&:=\delta^d\int^0_{-\delta^{2/\alpha}}\int^{-\left(\frac{3\delta}{2}\right)^{2/\alpha}}_{s-T}|s-r|^{(1-\frac{\ep}{2})\alpha - 1}\int_{B_{3d/2}(x)}|f(r,z)|^{p_0}dzdrds,\\
I_{12}&:=\delta^d\int^0_{-\delta^{2/\alpha}}\int^{-\left(\frac{3\delta}{2}\right)^{2/\alpha}}_{s-T}|s-r|^{(1-\frac{\ep}{2})\alpha - 1}\int^{\frac{d}{2}}_{\frac{\delta d}{2}}\rho^{-d+\ep-1}\int_{B_{3\rho}(x)}|f(r,z)|^{p_0}dzd\rho drds.
\end{aligned}
\end{equation*}
  For $I_{11}$, by integration by parts with respect to $r$, and \eqref{comparable}, we have
\begin{equation*}
\begin{aligned}
I_{11} &:=  \delta^d\int^0_{-\delta^{2/\alpha}} \left(\int^{-\left(\frac{3\delta}{2}\right)^{2/\alpha}}_{s-T}|s-r|^{(1-\frac{\ep}{2})\alpha - 1}\int_{B_{3d/2}(x)}|f(r,z)|^{p_0}dz  dr \right) ds\\
&\leq N\delta^d\int^0_{-\delta^{2/\alpha}} \Bigg[ T^{\left( 1-\frac{\ep}{2} \right)\alpha-1}\int_{s-T}^0 \int_{B_{3d/2}(x)}|f(\sigma,z)|^{p_0}dzd\sigma \\
&\quad\quad\quad\quad\quad+\int^{-\left(\frac{3\delta}{2}\right)^{2/\alpha}}_{s-T}|s-r|^{(1-\frac{\ep}{2})\alpha - 2}\int_r^0\int_{B_{3d/2}(x)}|f(\sigma,z)|^{p_0}dzd\sigma dr \Bigg]ds\\
&\leq N\delta^d\int^0_{-\delta^{2/\alpha}}\Bigg[ (1+T)^{(1-\varepsilon/2)\alpha}+\int^{s}_{s-T}|s-r|^{(1-\frac{\ep}{2})\alpha - 1} dr \Bigg]ds\bM|f|^{p_0}(t,x)\\
&\leq N\delta^{d+2/\alpha}\bM|f|^{p_0}(t,x).
\end{aligned}
\end{equation*}
 For $I_{12}$, by Fubuni's theorem, integration by parts with respect to $r$, and \eqref{comparable}, we have (recall $\delta<1$)
\begin{equation*}
\begin{aligned}
I_{12}&=\delta^d  \int^{\frac{d}{2}}_{\frac{\delta d}{2}} \int^0_{-\delta^{2/\alpha}}\rho^{-d+\ep-1}\left(\int^{-\left(\frac{3\delta}{2}\right)^{2/\alpha}}_{s-T}|s-r|^{(1-\frac{\ep}{2})\alpha - 1}\int_{B_{3\rho}(x)}|f(r,z)|^{p_0}dz dr \right) ds  d\rho\\
&\leq N\delta^d\int^{\frac{d}{2}}_{\frac{\delta d}{2}}\rho^{-d+\ep-1}\int^0_{-\delta^{2/\alpha}} \Bigg[T^{\left( 1-\frac{\ep}{2} \right)\alpha-1}\int_{s-T}^0\int_{B_{3\rho}(x)}|f(\sigma,z)|^{p_0}dzd\sigma\\
&\quad\quad\quad\quad\quad\quad+\int^{-\left(\frac{3\delta}{2}\right)^{2/\alpha}}_{s-T}|s-r|^{(1-\frac{\ep}{2})\alpha - 2} \int_r^0\int_{B_{3\rho}(x)}|f(\sigma,z)|^{p_0}dzd\sigma dr\Bigg] ds d\rho \\
&\leq N\delta^d\int_{\frac{\delta d}{2}}^{\frac{d}{2}}\rho^{\ep-1}\int^0_{-\delta^{2/\alpha}}\left[(1+T)^{(1-\varepsilon/2)\alpha}+\int_{s-T}^{s}|s-r|^{(1-\frac{\ep}{2})\alpha-1}dr\right]dsd\rho \bM|f|^{p_0}(t,x)\\
&\leq N\delta^d\int^{\frac{d}{2}}_0\rho^{\ep-1}d\rho\int^0_{-\delta^{2/\alpha}} ds \,\bM|f|^{p_0}(t,x) \\
&\leq N \delta^{d+2/\alpha}\bM|f|^{p_0}(t,x).
\end{aligned}
\end{equation*}

Now we consider $I_2$. Again, by \eqref{int_by_part1},
\begin{eqnarray*}
&&\int_{ |z|\geq \frac{d}{2}}|z|^{-d-\ep}|f(r,y-z)|^{p_0}dz\\
&=&-( d/2)^{-d-\varepsilon} \int_{B_{d/2}(0)}|f(r,y-z)|^{p_0}dz
 + (d+\varepsilon) \int^{\infty}_{\frac{ d}{2}}\rho^{-d-\ep-1}\int_{B_{\rho}(0)}|f(r,y-z)|^{p_0}dzd\rho\\
&\leq&  2d \int^{\infty}_{\frac{ d}{2}}\rho^{-d-\ep-1}\int_{B_{\rho}(0)}|f(r,y-z)|^{p_0}dzd\rho
 \end{eqnarray*}
Thus, by
\eqref{comparable}, Fubini's theorem,  and integration by parts with respect to $r$,
\begin{equation*}
\begin{aligned}
I_2 &:= \int_{Q_\delta}\int^{-\left(\frac{3\delta}{2}\right)^{2/\alpha}}_{s-T}|s-r|^{(1+\frac{\ep}{2})\alpha-1}\int_{ |z|\geq \frac{d}{2}}|z|^{-d-\ep}|f(r,y-z)|^{p_0}dzdrdyds \\
&\leq N\delta^d\int^\infty_{\frac{d}{2}}\rho^{-d-\ep-1}\int^0_{-\delta^{2/\alpha}} \left(\int^{-\left(\frac{3\delta}{2}\right)^{2/\alpha}}_{s-T}|s-r|^{(1+\frac{\ep}{2})\alpha - 1}\int_{B_{3\rho}(x)}|f(r,z)|^{p_0}dz dr \right)ds d\rho \\
&\leq N\delta^d\int_{\frac{d}{2}}^\infty\rho^{-d-\ep-1}\int^0_{-\delta^{2/\alpha}} \Bigg[T^{(1+\frac{\varepsilon}{2})\alpha-1}\int_{s-T}^0\int_{B_{3\rho}(x)}|f(\sigma,z)|^{p_0}dzd\sigma\\
&\quad\quad\quad\quad\quad\quad\quad\quad\quad\quad+\int^{s}_{s-T}|s-r|^{(1+\frac{\ep}{2})\alpha - 2} \int_r^0\int_{B_{3\rho}(x)}|f(\sigma,z)|^{p_0}dzd\sigma dr\Bigg] ds d\rho \\
&\leq N\delta^d\int_{\frac{d}{2}}^\infty\rho^{-\ep-1}d\rho\int^0_{-\delta^{2/\alpha}} (T+1)^{(1+\varepsilon/2)\alpha} ds \,\bM|f|^{p_0}(t,x) \\
&\leq N(\alpha,d,p_{0},T) \delta^{d+2/\alpha}\bM|f|^{p_0}(t,x).
\end{aligned}
\end{equation*}
 Therefore, \eqref{bound_4_ineq} is proved for $k = 0$. Similarly, one can treat the case $k = 1$.

\textbf{Step 2.} $(k=2)$. Due to Poincar\'e's inequality, it is sufficient to show that
\begin{equation*}
\aint_{Q_\delta}( |\partial_s L_2f|^{p_0}+|D_yL_2f|^{p_0} )dyds \leq N(d,\alpha,p_0)\bM |f|^{p_0}(t,x), \quad \forall (t,x)\in Q_{\delta}.
\end{equation*}
Due to \eqref{scaling_qxx}, we may assume $\delta = 2$. Thus, we  will only prove
\begin{equation*}
\aint_{Q_2}(|\partial_sL_2f|^{p_0}+|D_yL_2f|^{p_0}) dyds \leq N\bM |f|^{p_0}(t,x).
\end{equation*}
Let  $\ep\in(0,(1-\frac{1}{p_0})\frac{\alpha}{2+\alpha+\alpha d})$. Observe that $f(r,y-z) = 0$ for $r\geq -3^{2/\alpha}$ or $(y,z)\in[-1,1]^d\times B_{d}$. Thus
\begin{equation*} 
\begin{aligned}
&\int_{Q_2} | D_yL_2f(s,y) |^{p_0} dyds \\
&\quad\leq \int^0_{-2^{2/\alpha}}\int_{[-1,1]^d} \left| \int^{-3^{2/\alpha}}_{-\infty}\int_{\mathbb{R}^d} |D_zq_{z^iz^j}(s-r,z)f(r,y-z)| dzdr\right|^{p_0} dyds \\
&\quad\leq\int^0_{-2^{2/\alpha}} \int_{[-1,1]^d}  I_3^{p_0-1}  \int^{-3^{2/\alpha}}_{-\infty}\int_{|z|\geq d} | D_zq_{z^iz^j}(s-r,z)|^{1+p_0\ep}|f(r,y-z)|^{p_0} dzdr dyds,
\end{aligned}
\end{equation*}
where 
\begin{equation*}
I_3 = I_3(s) = \int^{-3^{2/\alpha}}_{-\infty}\int_{\bR^d} | D_zq_{z^iz^j}(s-r,z)|^{1-p_0'\ep} dzdr,\quad 1/p_0+1/p_0' = 1.
\end{equation*}
By \eqref{scaling_of_q}, change of variables, and \eqref{comparable}, we have
\begin{equation}
\begin{aligned}
&I_3(s) = \int^{-3^{2/\alpha}}_{-\infty}\int_{\bR^d} | D_zq_{z^iz^j}(s-r,z)|^{1-p_0'\ep} dzdr \\
&\quad\leq \int^{-3^{2/\alpha}}_{-\infty}\int_{\mathbb{R}^d} |s-r|^{(-\frac{\alpha d}{2}-1-\frac{\alpha}{2})(1-p_0'\ep)}| D_zq_{z^iz^j}(1,(s-r)^{-\frac{\alpha}{2}}z) |^{1-p_0'\ep} dzdr \\
&\quad\leq \int_{-\infty}^{-3^{2/\alpha}}  |s-r|^{(-\frac{\alpha d}{2}-1-\frac{\alpha}{2})(1-p_0'\ep)+\frac{\alpha d}{2}}\int_{\mathbb{R}^d} | D_zq_{z^iz^j}(1,z)|^{1-p_0'\ep}dz dr \\
&\quad\leq\int_{-\infty}^{-3^{2/\alpha}}|r|^{-(\frac{\alpha d}{2}+1+\frac{\alpha}{2})(1-p_0'\ep)+\frac{\alpha d}{2}}dr\int_{\mathbb{R}^d}| D_zq_{z^iz^j}(1,z)|^{1-p_0'\ep}dz\\
\end{aligned}
\end{equation}
Since $ -(\frac{\alpha d}{2}+1+\frac{\alpha }{2})(1-p_0'\ep) +\frac{\alpha d}{2} <-1$, 
we have $I_3(s)\leq N(\alpha,p_0,\varepsilon)<\infty$.
Thus, by \eqref{scaling_of_q}, \eqref{bounds_of_q_3}, Lemma \ref{useful_lemma} \eqref{int_by_part}, Fubini's theorem, \eqref{comparable}, and integration by parts with respect to $r$,  we have
\begin{equation*}
\begin{aligned}
&\int_{Q_2} | D_yL_2f(s,y) |^{p_0} dyds \\
&\quad\leq N\int_{Q_2} \int^{-3^{2/\alpha}}_{-\infty}\int_{|z|\geq d} | D_zq_{z^iz^j}(s-r,z)|^{1+p_0\ep}|f(r,y-z)|^{p_0} dzdrdyds\\
&\quad\leq N\int_{Q_2}\int^{-3^{2/\alpha}}_{-\infty}|s-r|^{-(1+p_0\ep)}\int_{|z|\geq d}|z|^{-(d+1)(1+p_0\ep)}|f(r,y-z)|^{p_0}dzdrdyds\\
&\quad\leq N \int^0_{-2^{2/\alpha}}\int_{-\infty}^{-3^{2/\alpha}}|s-r|^{-(1+p_0\ep)}\int_{d}^\infty \rho^{-(d+1)(1+p_0\ep)-1}\int_{B_{3\rho}(x)}|f(r,z)|^{p_0}dzd\rho drds\\
\end{aligned}
\end{equation*}
\begin{equation*}
\begin{aligned}
&\leq N \int_{d}^\infty \rho^{-(d+1)(1+p_0\ep)-1}\left(\int_{-\infty}^{-3^{2/\alpha}}|r|^{-(1+p_0\ep)}\int_{B_{3\rho}(x)}|f(r,z)|^{p_0}dz dr \right) d\rho \\
&\leq N \int_{d}^\infty \rho^{-(d+1)(1+p_0\ep)-1}\int_{-\infty}^{-3^{2/\alpha}}|r|^{-(1+p_0\ep)-1}\int_r^0\int_{B_{3\rho}(x)}|f(\sigma,z)|^{p_0}dz d\sigma dr d\rho \\
&\leq N \int_{d}^\infty \rho^{-(d+1)p_0\ep-2} d\rho\int_{-\infty}^{-3^{2/\alpha}}|r|^{-(1+p_0\ep)} dr\bM|f|^{p_0}(t,x) \\
&\leq N(\alpha,d,p_0) \bM|f|^{p_0}(t,x).
\end{aligned}
\end{equation*}
Next, we show
\begin{equation*}
\int_{Q_2}|\partial_sLf|^{p_0} dyds \leq N(d,\alpha,p_0) \bM|f|^{p_0}(t,x), \quad \forall \, (t,x)\in Q_2.
\end{equation*}
Let $\ep\in(0,(1-\frac{1}{p_0})\frac{2}{\alpha d + 4})$. Recall that $f(r,y-z) = 0$ for $r\geq -3^{2/\alpha}$ or $(y,z)\in[-1,1]^d\times B_d$. Then, by H\"older inequality,
\begin{equation*}
\begin{aligned}
&\int_{Q_2}|\partial_sL_2f(s,y)|^{p_0}dyds \\
&\quad\leq \int^0_{-2^{\alpha/2}}\int_{[-1,1]^d} \left( \int^{-3^{\alpha/2}}_{-\infty}\int_{\mathbb{R}^d} |\partial_s q_{z^iz^j}(s-r,z)f(r,y-z)| dzdr\right)^{p_0} dyds \\
&\quad\leq\int^0_{-2^{2/\alpha}} \int_{[-1,1]^d}  I_4^{p_0-1}  \int^{-3^{2/\alpha}}_{-\infty}\int_{|z|\geq d} | \partial_sq_{z^iz^j}(s-r,z)|^{1+p_0\ep}|f(r,y-z)|^{p_0} dzdr dyds,
\end{aligned}
\end{equation*}
where 
$$I_4 = I_4(s) = \int^{-3^{2/\alpha}}_{-\infty}\int_{|z|\geq d} | \partial_sq_{z^iz^j}(s-r,z)|^{1-p_0'\ep} dzdr, \quad 1/p_0+1/p_0' = 1.
$$
Observe that by \eqref{scaling_of_q},
\begin{equation*}
|\partial_t q_{xx}(t,x)| \leq N(t^{-\frac{\alpha d}{2} -2}|q_{xx}(1,t^{-\frac{\alpha}{2}}x)|+t^{-\frac{\alpha d}{2} -2 -\frac{\alpha}{2}}|x||q_{xxx}(1,t^{-\frac{\alpha}{2}}x)|).
\end{equation*}
Then we have
\begin{equation*}
\begin{aligned}
I_4 &= \int^{-3^{2/\alpha}}_{-\infty}\int_{|z|\geq d} | \partial_sq_{z^iz^j}(s-r,z)|^{1-p_0'\ep} dzdr \\
&\leq N(\alpha,d,p_0,\varepsilon)\Bigg(\int^{-3^{2/\alpha}}_{-\infty}\int_{|z|\geq d} |(s-r)^{-\frac{\alpha d}{2}-2}q_{z^iz^j}(1,(s-r)^{-\frac{\alpha}{2}}z)|^{1-p_0'\ep}dzdr\\
&\quad\quad\quad\quad+\int^{-3^{2/\alpha}}_{-\infty}\int_{|z|\geq d} |(s-r)^{-\frac{\alpha d}{2}-2-\frac{\alpha}{2}}|z|q_{z^iz^jz^k}(1,(s-r)^{-\frac{\alpha}{2}}z)|^{1-p_0'\ep}dzdr\Bigg) \\
&=: N (I_{41} + I_{42}).
\end{aligned}
\end{equation*}
For $I_{41}$, by \eqref{scaling_of_q} and change of variables, 
\begin{equation*}
\begin{aligned}
I_{41} &= \int^{-3^{2/\alpha}}_{-\infty}\int_{|z|\geq d} |(s-r)^{-\frac{\alpha d}{2}-2}q_{z^iz^j}(1,(s-r)^{-\frac{\alpha}{2}}z)|^{1-p_0'\ep}dzdr\\
&\leq \int^{-3^{2/\alpha}}_{-\infty}|s-r|^{-(\frac{\alpha d}{2}+2)(1-p_0'\ep)+\frac{\alpha d}{2}}dr\int_{\mathbb{R}^d}|q_{z^iz^j}(1,z)|^{1-p_0'\ep} dz.
\end{aligned}
\end{equation*}
Since  $-(\frac{\alpha d}{2}+2)(1-p_0'\ep)+\frac{\alpha d}{2}<-1$ and \eqref{during_proof_11},  we have $I_{41}<N(\alpha,d,p_0)<\infty$.  Also, by the same arguments used for   $I_{41}$,
\begin{equation*}
\begin{aligned}
I_{42} &= \int^{-3^{2/\alpha}}_{-\infty}\int_{|z|\geq d} |(s-r)^{-\frac{\alpha d}{2}-2-\frac{\alpha}{2}}|z|q_{z^iz^jz^k}(1,(s-r)^{-\frac{\alpha}{2}}z)|^{1-p_0'\ep}dzdr \\
&\leq \int^{-3^{2/\alpha}}_{-\infty} |s-r|^{-(\frac{\alpha d}{2}+2)(1-p_0'\ep)+\frac{\alpha d}{2}}\int_{\mathbb{R}^d}(|z||q_{z^iz^jz^k}(1,z)|)^{1-p_0'\ep}dzdr, \\
\end{aligned}
\end{equation*}
and the last term is bounded by a constant which depends only on $\alpha,d,p_0$. 
Thus,
by \eqref{scaling_of_q}, \eqref{bounds_of_q_3}, Lemma \ref{useful_lemma} \eqref{int_by_part}, Fubini's theorem, \eqref{comparable}, and integration by parts again, we have
\begin{equation*}
\begin{aligned}
&\int_{Q_2}|\partial_sL_2f(s,y)|^{p_0}dyds \\
&\quad\leq N\int_{Q_2} \int^{-3^{2/\alpha}}_{-\infty}\int_{|z|\geq d} | \partial_sq_{z^iz^j}(s-r,z)|^{1+p_0\ep}|f(r,y-z)|^{p_0} dzdr dyds \\
&\quad\leq N\int_{Q_2}\int^{-3^{2/\alpha}}_{-\infty}  \int_{|z|\geq d}  \bigg(|(s-r)^{-\frac{\alpha d}{2}-2}q_{z^iz^j}(1,(s-r)^{-\frac{\alpha}{2}}z)|^{1+p_0\ep}\\
&\quad\quad\quad\quad+|(s-r)^{-\frac{\alpha d}{2}-2-\frac{\alpha}{2}}|z|q_{z^iz^jz^k}(1,(s-r)^{-\frac{\alpha}{2}}z)|^{1+p_0\ep}\bigg) |f(r,y-z)|^{p_0}dzdrdyds \\
&\quad\leq N\int^0_{-2^{2/\alpha}} \int_{[-1,1]^d}\int^{-3^{2/\alpha}}_{-\infty}  \int_{|z|\geq d} |s-r|^{-2-2p_0\ep}|z|^{-d(1+p_0\ep)}|f(r,y-z)|^{p_0}dzdrdyds \\
&\quad\leq N\int^0_{-2^{2/\alpha}} \int_{[-1,1]^d}\int^{-3^{2/\alpha}}_{-\infty}   |r|^{-2-2p_0\ep}\int_{|z|\geq d}|z|^{-d(1+p_0\ep)}|f(r,y-z)|^{p_0}dzdrdyds \\
&\quad\leq N \int_{-\infty}^{-3^{2/\alpha}}|r|^{-2-p_0\ep}\int_{d}^\infty\rho^{-d(1+p_0\ep)-1}\int_{B_{3\rho}(x)}|f(r,z)|^{p_0}dzd\rho dr \\
&\quad\leq N \int_{d}^\infty \rho^{-d(1+p_0\ep)-1} \int_{-\infty}^{-3^{2/\alpha}}|r|^{-3-p_0\ep}\int_r^0\int_{B_{3\rho}(x)}|f(\sigma,z)|^{p_0}dzd\sigma dr d\rho\\
&\quad\leq N\int_d^\infty \rho^{-dp_0\ep-1}d\rho\int_{-\infty}^{-3^{2/\alpha}}|r|^{-2-p_0\ep}dr \bM|f|^{p_0}(t,x) \\
&\quad\leq N(\alpha,d,p_{0}) \bM|f|^{p_0}(t,x),
\end{aligned}
\end{equation*}
where the constants $N$ depend only on $\alpha,d,p_0$. 
The lemma is proved.

\end{proof}

For a measurable function $h(t,x)$ on $\mathbb{R}^{d+1}$, define the sharp function
\begin{equation*}
\begin{aligned}
h^{\#}(t,x) := \sup_{Q}\aint_{Q}|h(r,z)-h_{Q}|drdz,
\end{aligned}
\end{equation*}
where 
$$ h_{Q} = \aint_{Q}h(s,y)dyds=\frac{1}{|Q|}\int_{Q}h(s,y)dyds .
$$
The supremum is taken over all $Q\subset\mathbb{R}^{d+1}$ containing $(t,x)$ of the form 
\begin{equation*}
\begin{aligned}
Q &= Q_{\delta}(s,y) 
\\
&= \left(s-\frac{\delta^{\frac{2}{\alpha}}}{2},s+ \frac{\delta^{\frac{2}{\alpha}}}{2}\right)\times\left(y^1-\frac{\delta}{2},y^1+\frac{\delta}{2}\right)\times\cdots\times \left(y^d-\frac{\delta}{2},y^d+\frac{\delta}{2}\right)
\end{aligned}
\end{equation*}
with $\delta>0$. Observe that for any $c\in\R$, and $p_{0}\geq1$,
\begin{equation}\label{3.20}
\begin{aligned}
\aint_{Q}|h(s,y)-h_{Q}|^{p_{0}}dyds\leq 2^{p_{0}} \aint_{Q}|h(s,y)-c|^{p_{0}}dyds.
\end{aligned}
\end{equation}

Here is our sharp function estimate.

\begin{theorem} \label{thm 05.06.2}
Let $f\in C_c^\infty(\R^{d+1})$, and $p_0\in(1,\infty)$. Then we have
\begin{equation} \label{eqn 05.06.2}
\begin{gathered}
(L_{k}f)^{\#}\leq N_{k}(\alpha,d,p_0,T) \left( \mathbb{M}|f|^{p_{0}}\right)^{\frac{1}{p_{0}}}, \quad k=0,1\\
(L_{2}f)^{\#}\leq N_{2}(\alpha,d,p_0) \left( \mathbb{M}|f|^{p_{0}}\right)^{\frac{1}{p_{0}}}.
\end{gathered}
\end{equation}
\end{theorem}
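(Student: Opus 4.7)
The plan is the standard one for sharp function estimates: for an arbitrary parabolic cube $Q$ from the definition of $h^{\#}$ containing $(t,x)$, decompose $f$ according to the position of its support relative to $Q$, and estimate each piece's contribution to $\aint_{Q}|L_{k}f-c|$ using the four preceding lemmas for a suitably chosen centering constant $c$.

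By the translation invariance \eqref{eqn 05.16.1} (together with translation invariance of $\mathbb{M}$), one may reduce to the case $Q=Q_{\delta}=[-\delta^{2/\alpha},0]\times[-\delta/2,\delta/2]^{d}$. Since $L_{k}f(s,y)$ for $(s,y)\in Q_{\delta}$ depends only on values $f(r,z)$ with $r\le s\le 0$, the component of $f$ supported in $r>0$ contributes nothing on $Q_{\delta}$. Write $f\cdot 1_{r\leq 0}=f_{1}+f_{2}+f_{3}$ with
\begin{equation*}
f_{1}:=f\cdot 1_{-(3\delta/2)^{2/\alpha}\le r\le 0},\ \ f_{2}:=f\cdot 1_{r<-(3\delta/2)^{2/\alpha}}1_{|z|\le\delta d},\ \ f_{3}:=f\cdot 1_{r<-(3\delta/2)^{2/\alpha}}1_{|z|>\delta d}.
\end{equation*}
Then $f_{1}$ satisfies the hypothesis of Lemma \ref{2_2_bound_2} (as $(3\delta/2)^{2/\alpha}<(2\delta)^{2/\alpha}$), $f_{2}$ satisfies the hypothesis of Lemma \ref{lem 05.17.1} (since $\delta d<3\delta d/2$), and $f_{3}$ satisfies the hypothesis of Lemma \ref{2_2_bound_3}.

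Pick $c:=(L_{k}f_{3})_{Q_{\delta}}$. By \eqref{3.20} applied with exponent $1$ and the triangle inequality,
\begin{equation*}
\aint_{Q_{\delta}}|L_{k}f-(L_{k}f)_{Q_{\delta}}|\le 2\aint_{Q_{\delta}}|L_{k}f_{1}|+2\aint_{Q_{\delta}}|L_{k}f_{2}|+2\aint_{Q_{\delta}}|L_{k}f_{3}-(L_{k}f_{3})_{Q_{\delta}}|.
\end{equation*}
For the first two terms, H\"older's inequality together with Lemmas \ref{2_2_bound_2} and \ref{lem 05.17.1} gives
\begin{equation*}
\aint_{Q_{\delta}}|L_{k}f_{i}|\le\Bigl(\aint_{Q_{\delta}}|L_{k}f_{i}|^{p_{0}}\Bigr)^{1/p_{0}}\le N\bigl(\mathbb{M}|f|^{p_{0}}(t,x)\bigr)^{1/p_{0}},\qquad i=1,2.
\end{equation*}
For the oscillation term, writing $L_{k}f_{3}(s,y)-(L_{k}f_{3})_{Q_{\delta}}=\aint_{Q_{\delta}}\bigl(L_{k}f_{3}(s,y)-L_{k}f_{3}(r,z)\bigr)\,drdz$ and applying Jensen's inequality yields
\begin{equation*}
\aint_{Q_{\delta}}|L_{k}f_{3}-(L_{k}f_{3})_{Q_{\delta}}|^{p_{0}}\le\aint_{Q_{\delta}}\aint_{Q_{\delta}}|L_{k}f_{3}(s,y)-L_{k}f_{3}(r,z)|^{p_{0}}\,dydsdzdr,
\end{equation*}
which is bounded by $N\mathbb{M}|f|^{p_{0}}(t,x)$ thanks to Lemma \ref{2_2_bound_3}. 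Combining these three estimates and taking the supremum over all such cubes containing $(t,x)$ gives \eqref{eqn 05.06.2}; the $T$-independence of $N_{2}$ is inherited directly from the $T$-independence of the constants in Lemmas \ref{2_2_bound_1}--\ref{2_2_bound_3} for $k=2$.

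The only minor technical wrinkle is that the hard cut-offs $1_{(\cdots)}$ render $f_{1},f_{2},f_{3}$ non-smooth, whereas Lemmas \ref{2_2_bound_1}--\ref{2_2_bound_3} are stated for $C_{c}^{\infty}$ inputs. This is a purely cosmetic matter: the proofs of those lemmas only exploit integrability of $|f|$ against explicit kernels and the support hypotheses, so they apply verbatim to each $f_{i}$; alternatively, one can replace the indicators by smooth cut-offs and pass to the limit via the dominated convergence theorem. Beyond this bookkeeping, I expect no substantial obstacle, since all the delicate kernel estimates have already been packaged in Lemmas \ref{2_2_bound_2}--\ref{2_2_bound_3}.
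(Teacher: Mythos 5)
Your argument is correct and follows essentially the same strategy as the paper: reduce to $Q=Q_{\delta}$, split $f$ into a temporally-near piece, a temporally-far/spatially-near piece, and a temporally-far/spatially-far piece, handle the first two via Lemmas \ref{2_2_bound_2}--\ref{lem 05.17.1}, and bound the oscillation of the third via Lemma \ref{2_2_bound_3}. The two proofs differ only in bookkeeping. The paper sets $f_{1}=f\zeta$, $f_{2}=f(1-\zeta)\eta$, $f_{3}=f(1-\zeta)(1-\eta)$ with smooth cutoffs (space-first, then time), so each $f_{i}$ stays $C_{c}^{\infty}$ and the preliminary lemmas apply verbatim; but because $f_{3}$ then still has mass at $r>0$, the paper must introduce a further smooth cutoff $\xi$ in $t$ and observe that $L_{k}f_{3}=L_{k}(f_{3}\xi)$ for $s\le 0$ before invoking Lemma \ref{2_2_bound_3}. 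You instead discard the $r>0$ part at the outset by causality, split time-first, and use indicator cutoffs, which makes $f_{3}$ satisfy the support hypothesis of Lemma \ref{2_2_bound_3} directly and lets you bypass both Lemma \ref{2_2_bound_1} and the $\xi$-trick; the price is that your $f_{i}$ are no longer smooth, a gap you correctly flag and dispatch (smooth the indicators, or note the lemmas' proofs use only the support and integrability, not smoothness). Your passage from $\eqref{3.20}$ with exponent $1$ plus H\"older/Jensen to the $p_{0}$-level estimate is also equivalent to the paper's direct use of $\eqref{3.20}$ with exponent $p_{0}$. In short, a correct proof, marginally more streamlined, at the modest cost of the cutoff-regularity caveat.
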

\begin{proof} 
The proof is based on \cite[Theorem 3.1]{kim16timefractionalspde}. By the definition, it suffices to show that
\begin{equation}\label{theorem3.6-a}
\aint_{Q}|L_{k}f - (L_{k}f)_{Q}|^{p_{0}}drdz \leq N_k \mathbb{M} |f|^{p_{0}}(t,x)
\end{equation}
for any $(t,x)\in Q=Q_\delta(s,y)$ and $f\in C_c^{\infty}(\mathbb{R}^{d+1})$. By \eqref{eqn 05.16.1}, we may assume 
$$(s+\delta^{2/\alpha},y)=(0,0).$$
This implies that it suffices to consider $Q=Q_{\delta} = Q_\delta(-\delta^{2/\alpha},0)$. Take a function $\zeta$ in $\Ccinf(\R^{d})$ such that $\zeta=1$ on $B_{\delta d}$ and $\zeta=0$ outside of $B_{3\delta d/2}$. Also, choose a function $\eta$ in $\Ccinf(\R)$ such that $\eta=1$ on $[-(3\delta/2)^{2/\alpha},(3\delta/2)^{2/\alpha}]$ and $\eta=0$ outside of $[-(2\delta)^{2/\alpha},(2\delta)^{2/\alpha}]$. For $f$ in $C_{c}^{\infty}(\R^{d+1})$, let $f_{1}=f\zeta$, $f_{2}=f(1-\zeta)\eta$, and $f_{3}=f(1-\zeta)(1-\eta)$. Since $L_k$ are linear, for any real number $c$, 
$$|L_{k}f(s,y)-c|\leq |L_{k}f_{1}(s,y)|+|L_{k}f_{2}(s,y)|+|L_{k}f_{3}(s,y)-c|.
$$
By \eqref{3.20},
\begin{equation*}
\begin{aligned}
&\aint_{Q_{\delta}}|L_{k}f-(L_{k}f)_{Q_{R}}|^{p_{0}}dyds \\
&\quad\leq 2^{p_{0}}\aint_{Q_{\delta}}|L_{k}f-c|^{p_{0}}dyds
\\
&\quad\leq 2^{2p_{0}}\aint_{Q_{\delta}}|L_{k}f_{1}|^{p_{0}}dyds+2^{2p_{0}}\aint_{Q_{\delta}}|L_{k}f_{2}|^{p_{0}}dyds+2^{2p_{0}}\aint_{Q_{\delta}}|L_{k}f_{3}-c|^{p_{0}}dyds.
\end{aligned}
\end{equation*}
$L_{k}f_{1}$ and $L_{k}f_{2}$ can be handled by Lemma \ref{2_2_bound_1}, Lemma \ref{2_2_bound_2}, and Lemma \ref{lem 05.17.1}. It remains to control the last term. Take $c=(L_{k}f_{3})_{Q_\delta}$. Choose $\xi\in\Ccinf(\R)$ such that $0\leq\xi\leq1$, $\xi=1$ if $s\leq \delta^{2/\alpha}$ and $\xi=0$ if $s\geq (3\delta/2)^{2/\alpha}$. By Lemma \ref{2_2_bound_3},
\begin{equation*}
\aint_{Q_{\delta}}\aint_{Q_{\delta}}|L_{k}f_{3}\xi(s,y)-L_{k}f_{3}\xi(r,z)|^{p_{0}}dzdrdyds \leq N_{k} \mathbb{M}|f\xi|^{p_{0}}(t,x).
\end{equation*}
Since $L_{k}f_{3}(s,y)=L_{k}f_{3}\xi(s,y)$ for $s\leq0$, and $|f\xi|\leq|f|$, we have \eqref{theorem3.6-a}. The theorem is proved.
\end{proof}

For $(t,x)$, and $(s,y)$ in $\R^{d+1}$, define a nonnegative symmetric funtion
\begin{equation*}
d_{\alpha}((t,x),(s,y))=|t-s|^{\frac{\alpha}{2}}+|x-y|.
\end{equation*}
Then for any $(t,x),(s,y)$, and $(r,z)$ in $\R^{d+1}$, it follows that
\begin{equation*}
d_{\alpha}((t,x),(r,z))\leq  d_{\alpha}((t,x),(s,y))+d_{\alpha}((s,y),(r,z) )
\end{equation*}
since $\alpha\in(0,2)$. Also, it holds that $d_{\alpha}((t,x),(s,y))=0$ if and only if $(t,x)=(s,y)$. Therefore, the map $d_{\alpha}$ on $\R^{d+1}\times\R^{d+1}$ is a metric. Moreover, the ball 
\begin{equation*}
B^{\alpha}_{r}(t,x)=\{(s,y)|d_{\alpha}((t,x),(s,y))<r\}
\end{equation*}
satisfies the following doubling condition.
\begin{equation}\label{eqn 05.06.4}
|B^{\alpha}_{2r}(t,x)|=2^{\frac{2}{\alpha}+d}|B^{\alpha}_{r}(t,x)|.
\end{equation}
Therefore, there exists a filtration of partitions $\mathbb{C}_{n},n\in\bZ$ of $\R^{d+1}$ satisfying the following.
\begin{enumerate}[(i)]
\item For each $n\in\bZ$, 
$$ \left|\R^{d+1}\setminus\bigcup_{\tilde{Q}_{n}\in\mathbb{C}_{n}} \tilde{Q}_{n}\right|=0.$$
\item There exists a constant $\ep\in(0,1)$ depending only on $\alpha,d$ such that $\text{diam}(\tilde{Q}_{n})\leq N_{0}\ep^{n}$ for any $\tilde{Q}_{n}\in\mathbb{C}_{n}$.
\item For any $m\leq n$, and $\tilde{Q}_{n}\in\mathbb{C}_{n}$, there exists a unique $\tilde{Q}_{m}\in\mathbb{C}_{m}$ such that $\tilde{Q}_{n}\subset\tilde{Q}_{m}$.
\item Each $\tilde{Q}_{n}\in\mathbb{C}_{n}$ contains some ball $B^{\alpha}_{\ep_{0}\ep^{n}}(t,x)$, where the constant $\varepsilon_{0}>0$ depends only on $\alpha,d$
\end{enumerate}
(e.g., \cite[Theorem 2.1]{dong18Apweights}).
By using this one can define a dyadic sharp function of a measurable function $h$ as follows.
\begin{equation*}
\begin{aligned}
h^{\#}_{\text{dy}}(t,x)=\sup_{n\in\bZ}\aint_{\tilde{Q}_{n}}|f(s,y)-(f)_{\tilde{Q}_{n}}|dsdy.
\end{aligned}
\end{equation*}
Also $Q_{\delta}(t,x)$ and $B^{\alpha}_{\delta}(t,x)$ have the following relations.
\begin{equation}\label{eqn 05.06.5}
\begin{gathered}
B^{\alpha}_{\delta}(t,x)\subset Q_{2\delta}(t,x),\quad Q_{\delta}(t,x)\subset  B^{\alpha}_{(2^{-\alpha/2}+\sqrt{d}/2)\delta}(t,x), \quad (t,x)\in\R^{d+1}.
\\
\end{gathered}
\end{equation}
Moreover, using the propery of $\mathbb{C}_{n}$, the doubling condition \eqref{eqn 05.06.4} of $d_{\alpha}$, and \eqref{eqn 05.06.5}, we get
\begin{equation}\label{eqn 05.06.3}
h^{\#}_{\text{dy}}\leq N h^{\#}(t,x),\quad (t,x)\in\R^{d+1}\quad (a.e.),
\end{equation}
where the constant $N$ depends only on $\alpha,d,\varepsilon_{0}$.

\vspace{4mm}

\textbf{Proof of Theorem \ref{thm 05.06.3}} 

Let $f\in C_c^\infty(\bR^{d+1})$ be given.  For each $n\in \bN$, take smooth $\phi_n\in C^\infty(\bR)$ so that $0\leq\phi_n\leq 1$, $\phi_n = 1$ for $t\leq T$, and $\phi_n = 0$ for $t\geq T+1/n$. By Remark \ref{rmk 09.27.21:52} there exist $p_{1}\in(1,p)$, and $q_{1}\in(1,q)$ such that $w_{1}\in A_{p_{1}}$, and $w_{2}\in A_{q_{1}}$. Choose $p_0\in(1,\infty)$ such that 
\begin{equation*}
p_{1}<p/p_{0}<p, \quad  q_{1}<q/p_{0}<q.
\end{equation*} 
Then, it follows that $w_{1}\in A_{p/p_{0}}$, and $w_{2}\in A_{q/p_{0}}$. By a version of the Fefferman-Stein theorem (\cite[Corollary 2.7]{dong18Apweights} with $d_{\alpha}$,) and \eqref{eqn 05.06.3}, we have
\begin{equation*}
\begin{aligned}
\|L_{k}f\phi_n\|_{\tilde{\bL}(q,p,w_{2},w_{1})} &\leq N \|(L_{k}f\phi_n)^{\#}_{\text{dy}}\|_{\tilde{\bL}(q,p,w_{2},w_{1})}\leq N \|(L_{k}f\phi_n)^{\#}\|_{\tilde{\bL}(q,p,w_{2},w_{1})}.
\end{aligned}
\end{equation*}
By using Theorem \ref{thm 05.06.2}, and a version of the Hardy-Littlewood theorem(e.g. \cite[Corollary 2.6]{dong18Apweights} with Euclidean mertic on $\R^{d+1}$),
\begin{equation*}
\begin{aligned}
 \|(L_{k}f\phi_{n})^{\#}\|_{\tilde{\bL}(q,p,w_{2},w_{1})}
&\leq N_k \|(\mathbb{M}|f\phi_{n}|^{p_{0}})^{1/p_{0}}\|_{\tilde{\bL}(q,p,w_{2},w_{1})}
\\
&\leq N_k \||f\phi_{n}|^{p_{0}}\|_{\tilde{\bL}(q/p_{0},p/p_{0},w_{2},w_{1})}^{1/p_{0}}
\\
&= N_k \|f\phi_{n}\|_{\tilde{\bL}(q,p,w_{2},w_{1})},
\end{aligned}
\end{equation*}
where $N_k=N_k(\alpha,d,p,q, [w_{1}]_{p},[w_{2}]_{q},T)$ ($k=0,1$), and $N_2=N_2(\alpha,d,p,q, [w_{1}]_{p},[w_{2}]_{q})$.

Since 
$$\|L_{k}f\|_{\tilde{\bL}(q,p,w_{2},w_{1},T)}=\|L_{k}(f\phi_{n})\|_{\tilde{\bL}(q,p,w_{2},w_{1},T)}\leq \|L_{k}(f\phi_{n})\|_{\tilde{\bL}(q,p,w_{2},w_{1})},$$ 
and 
$$\lim_{n\to\infty}\|f\phi_{n}\|_{\tilde{\bL}(q,p,w_{2},w_{1},T+1/n)}=\|f\|_{\tilde{\bL}(q,p,w_{2},w_{1},T)},$$ 
it holds that
\begin{equation*}
\begin{aligned}
\|L_{k}f\|_{\tilde{\bL}(q,p,w_{2},w_{1},T)}\leq N_k \|f\|_{\tilde{\bL}(q,p,w_{2},w_{1},T)}.
\end{aligned}
\end{equation*}
The theorem is proved.
\qed

\mysection{Proof of Theorem \ref{theorem 5.1}}

First we prove  the a prior estimate and the uniqueness.
Suppose that $u\in\bH^{\alpha,2}_{q,p,0}(w_{2},w_{1},T)$ is a solution to equation \eqref{eqn 05.09.1}. Take $u_{n}\in\Ccinf((0,\infty)\times\R^{d})$ which converges to $u$ in $\bH^{\alpha,2}_{q,p}(w_{2},w_{1},T)$. Let $f_{n}:=\partial^{\alpha}_{t}u_{n}-\Delta u_{n}$. Then, by Lemma \ref{representation_whole_space} \eqref{representation}, 
\begin{equation} \label{during_proof_5}
u_n(t,x)=\int_{0}^{t}\int_{\R^{d}}q(t-s,x-y)f_n(s,y)dyds, \quad (t,x)\in(0,T)\times\bR^d.
\end{equation}
Obviously,  if $0<s<t<T$  then  $t-s\in(0,T)$. Also note that $f_n(t,\cdot) = 0$ for all small $t>0$. Thus, by extending  $f_n(t)=0$ for $t<0$,  we have 
$$u_{n}=L^T_{0}f_{n}, \quad D_{i}u_{n}=L^{T,i}_{1}f_{n}, \quad D_{ij}u_{n}=L_{2}^{ij} f_{n}, \quad t\in (0,T).
$$
 Observe that $f_n\in \tilde{\bL}(q,p,w_{2},w_{1},T)$ since $\partial_t^\alpha u_n,\Delta u_n \in\bL_{q,p}(w_{2},w_{1},T)$ and $f_n = 0$ for $t\leq0$. 
Thus, by Theorem \ref{thm 05.06.3} and Remark \ref{remark 10.24.1},
\begin{equation*}
\begin{aligned}
\|u_{n}\|_{\bH^{0,2}_{q,p}(w_{2},w_{1},T)}&\leq \|L_{0}f_{n}\|_{\tilde{\bL}_{(q,p,w_{2},w_{1},T)}}+\|L_{1}f_{n}\|_{\tilde{\bL}_{(q,p,w_{2},w_{1},T)}}
+\|L_{2}f_{n}\|_{\tilde{\bL}_{(q,p,w_{2},w_{1},T)}}
\\
&\leq N \|f_{n}\|_{\tilde{\bL}_{(q,p,w_{2},w_{1},T)}}= N \|f_{n}\|_{\bL_{q,p}(w_{2},w_{1},T)}.
\end{aligned}
\end{equation*}
Since $\partial^{\alpha}_{t}u_{n}=\Delta u_{n}+f_{n}$, we have
\begin{equation}
    \label{eqn 10.20.5}
\|u_{n}\|_{\bH^{0,2}_{q,p}(w_{2},w_{1},T)}+\|\partial^{\alpha}_{t}u_{n}\|_{\bL_{q,p}(w_{2},w_{1},T)}\leq N\|f_{n}\|_{\bL_{q,p}(w_{2},w_{1},T)}.
\end{equation}
Letting $n\to\infty$, we obtain estimate \eqref{eqn 05.08.1}, and  the uniqueness also follows. 

Next, we  prove the existence. Let $f\in\bL_{q,p}(w_{2},w_{1},T)$, and take $f_{n}\in\Ccinf((0,\infty)\times\R^{d})$ which converges to $f$ in $\bL_{q,p}(w_{2},w_{1},T)$. For each $n$ define $u_{n}$ as
\begin{equation*}
u_{n}(t,x):=\int_{0}^{t}\int_{\R^{d}}q(t-s,x-y)f_{n}(s,y)dyds.
\end{equation*}
By Lemma \ref{representation_whole_space} and \eqref{eqn 10.20.5}, $u_n \in \bH^{\alpha,2}_{q,p,0}(w_{2},w_{1},T)$ and it satisfies
$$
\partial^{\alpha}_t u_n=\Delta u_n +f_n, \quad t>0.
$$
Also, by \eqref{eqn 10.20.5} again, we conclude that  $u_{n}$ is a Cauchy in $\bH^{2,\alpha}_{q,p,0}(w_{2},w_{1},T)$.  Finally, taking $n\to \infty$, we obtain a solution to equation   \eqref{eqn 05.09.1} in the space $\bH^{2,\alpha}_{q,p,0}(w_{2},w_{1},T)$. The theorem is proved.


\begin{thebibliography}{10}


\bibitem{baleanu12fractional}
B. Dumitru, D. Kai, S. Enrico,
\newblock  Fractional calculus: models and numerical methods,
\newblock World Scientific 3 (2012)

\bibitem{clement1992global}
P. Cl{\'e}ment, J. Pr{\"u}ss,
\newblock Global existence for a semilinear parabolic volterra equation,
\newblock {\em Math. Z.} 209 (1) (1992) 17-26.


\bibitem{dong18Apweights}
H. Dong,  D. Kim,
\newblock On {$L_p$}-estimates for elliptic and parabolic equations with
  {$A_p$} weights,
\newblock {\em Trans. Am. Math. Soc.} 
  370 (7) (2018) 5081-5130.

\bibitem{dong2019lp}
H. Dong, D. Kim,
\newblock $L_p$-estimates for time fractional parabolic equations with
  coefficients measurable in time,
\newblock {\em Adv. Math.} 345 (2019) 289-345.



\bibitem{grafakos2009modern}
L. Grafakos,
\newblock Modern fourier analysis,
\newblock Springer (2009).


\bibitem{kim2018second}
I. Kim, K. Kim,
\newblock On the second order derivative estimates for degenerate parabolic
  equations, 
\newblock {\em J. Differ. Equ.}  265 (11)  (2018) 5959-5983.

\bibitem{kim16timefractionalspde}
I. Kim, K. Kim,  S. Lim,
\newblock A sobolev space theory for stochastic partial differential equations
  with time-fractional derivatives, 
\newblock {\em Ann. Probab.}, 47 (4)  (2019) 2087-2139.



\bibitem{kim17timefractionalpde}
I. Kim, K. Kim, S. Lim,
\newblock An {$L_q(L_p)$}-theory for the time fractional evolution equations
  with variable coefficients,
\newblock {\em Adv. Math.}, 306 (2017) 123-176.


\bibitem{kim2015asymptotic}
K. Kim, S. Lim,
\newblock Asymptotic behaviors of fundamental solution and its derivatives
  related to space-time fractional differential equations,
\newblock {\em J. Korean Math. Soc.}, 53 (4)   (2016) 929-967.


\bibitem{mainardi1995fractional}
F. Mainardi, 
\newblock Fractional diffusive waves in viscoelastic solids,
\newblock {\em Nonlinear waves in solids}, 137 (1995) 93-97.

\bibitem{mainardi2001fractional}
F. Mainardi and P. Paradisi,
\newblock Fractional diffusive waves,
\newblock {\em J. Comput. Acoust.}, 9 (4)  (2001) 1417-1436.

\bibitem{metzler1999anomalous}
R. Metzler, E. Barkai, J. Klafter,
\newblock Anomalous diffusion and relaxation close to thermal equilibrium: A
  fractional fokker-planck equation approach,
\newblock {\em Phys. Rev. Lett.}, 82 (18)  (1999) 35-63.

\bibitem{metzler2000random}
R. Metzler, J. Klafter,
\newblock The random walk's guide to anomalous diffusion: a fractional dynamics
  approach,
\newblock {\em Physics reports}, 339 (1)   (2000) 1-77.

\bibitem{podlubny98fractional}
I. Podlubny,
\newblock  Fractional differential equations: an introduction to fractional
  derivatives, fractional differential equations, to methods of their solution
  and some of their applications,
\newblock Elsevier, (1998).


\bibitem{Pr1991}
J. Pr{\"u}ss,
\newblock Quasilinear parabolic Volterra equations in spaces of integrable
  functions,
\newblock {\em
  Semigroup Theory and Evolution Equations, Lecture Notes in Pure and Applied
  Mathematics}, 135  (1991) 401-420.

\bibitem{richard14fractional}
H. Richard,
\newblock {\em Fractional calculus: an introduction for physicists},
\newblock World Scientific (2014).

\bibitem{sakamoto2011initial}
K. Sakamoto, M. Yamamoto,
\newblock Initial value/boundary value problems for fractional diffusion-wave equations and applications to some inverse problems,
\newblock {\em  J. Math. Anal. Appl.}, 382 (1)  Elsevier  (2011) 426-447.

\bibitem{samko93fractional}
S.G. Samko, A.A. Kilbas, O.I. Marichev,
\newblock Fractional integrals and derivatives: theory and applications,
\newblock CRC Press (1993).

\bibitem{zacher2005maximal}
R. Zacher,
\newblock Maximal regularity of type $L_p$ for abstract parabolic volterra
  equations,
\newblock {\em J. Evol. Equ.}, 5 (1)   (2005) 79-103.

\bibitem{zacher2009weak}
R. Zacher,
\newblock Weak solutions of abstract evolutionary integro-differential equations in Hilbert spaces,
\newblock {\em Funkcialaj Ekvacioj}, 52 (1)  Division of Functional Equations, The Mathematical Society of Japan, (2009) 1-18.




\end{thebibliography}
\end{document}